\newtheorem{theorem}{Theorem}[section]
\newtheorem{lemma}[theorem]{Lemma}
\newtheorem{proposition}[theorem]{Proposition}
\newtheorem{corollary}[theorem]{Corollary}
\newtheorem{definition}[theorem]{Definition}
\newtheorem{remark}[theorem]{Remark}
\newcommand{\cov}{\mathop{\rm cov}}
\def\W{\mathcal W}
\def\G{\Gamma}
\def\E{{\mathbb E}}
\def\P{{\mathbb P}}
\def\N{{\mathbb N}}
\def\R{{\mathbb R}}
\def\M{{\mathcal M}}
\def\GG{{\mathcal G}}
\def\hg{{\hat g}}
\def\Ups{\Upsilon}
\def\l{\lambda}
\def\tt{\vartheta_{\sigma_2}}
\address{
	\newline
	\newline
	Luiz Renato Fontes
	\newline
	USP - Instituto de Matem\'atica e Estat\'istica.
	\newline  R. do Mat\~ao, 1010 - Butant\~a, S\~ao Paulo - SP, CEP: 05508-090, Brasil.
	\newline
	e-mail:{\rm \texttt{lrfontes@usp.br}}
	\newline
	\newline
	Susana Fr\'ometa
	\newline
	UFRGS - Instituto de Matem\'atica e Estat\'istica.
	\newline
	Campus do Vale, Av. Bento Gonçalves, Porto Alegre - RS, 9500. CEP: 91509-900, Brasil.
	\newline
	e-mail:{\rm \texttt{susana.frometa@ufrgs.br}}
	\newline
	\newline
	Leonel Zuazn\'abar
	\newline
	USP - Instituto de Matem\'atica e Estat\'istica.
	\newline  R. do Mat\~ao, 1010 - Butant\~a, S\~ao Paulo - SP, CEP: 05508-090, Brasil.
	\newline
	e-mail:{\rm \texttt{lzuaznabar@ime.usp.br}}
}
\subjclass[2000]{60K35, 82C44.}
\keywords{GREM, Random Hopping Dynamics, low temperature, fine tuning temperature,
	scaling limit, extreme time-scale, K process, spin glasses}
\begin{document}
	
		\date{\today}
		
		\title[Non-Cascading 2-GREM Dynamics at Extreme Time Scales]{ASYMPTOTIC BEHAVIOR OF A LOW TEMPERATURE NON-CASCADING 2-GREM DYNAMICS AT EXTREME TIME SCALES}
		
		\author{ Luiz Renato Fontes, Susana Fr\'ometa and Leonel Zuazn\'abar}
		
		\maketitle
		
		\begin{abstract}
			We derive the scaling limit for the Hierarchical Random Hopping dynamics for the non cascading 2-GREM at low temperatures and time scales where the dynamics is close to equilibrium. The {\em fine tuning} phenomenon 
			plays a role (under certain choices of parameters of the model), yielding three dynamical regimes. 
			In contrast to the {\em cascading} case, the pairs of first and second level energies have fluctuations which scale with the volume, and this leads to a family of time scales where we see the dynamics moving through only a part of the full low temperature energy landscape.
		\end{abstract}	
	
\section{Introduction}\label{sec:intro}

This paper is part of an ongoing effort to understand and describe the long term, large volume behavior of mean field spin glass dynamics at low temperature. It is most directly related to~\cite{fg2018}, where the Hierarchical Random Hopping dynamics (HRHD) for the {\em cascading} 2-GREM was analysed. The present paper takes on the {\em non-cascading} case of the same dynamics.

We may trace the study of such questions/models in the mathematical literature to \cite{abg1},  \cite{abg2}, where the case of the REM was investigated.
The main motivation in many mathematical papers, prompted by the analysis of the phenomenological trap models first appearing in the physics literature (see e.g.~\cite{bd} and~\cite{sn}), is to understand the {\em aging} phenomenon, which takes place away from equilibrium; see also~\cite{adg}, \cite{abc}, \cite{bg}, \cite{gg}, \cite{ag}, \cite{gg}. 

Looking at such dynamics when they are close but not quite in equilibrium, in the {\em ergodic time scale} has also been a matter interest, either in itself,  as a way to understand the transition of the dynamics from an aging behavior to an equilibrium behavior, or yet as an approach to obtain aging results, by first taking a scaling limit at an ergodic time scale, then taking a second limit at a vanishing time scale; see~\cite{fm},~\cite{fl},~\cite{fgg},~\cite{bfggm},~\cite{fg2018}. 

As in~\cite{fg2018}, by an ergodic time scale we mean a time scale where, loosely speaking, and as pointed out above, the dynamics is close to but not quite in equilibrium. 
It is synonymous to the {\em extreme time scale} denomination in the present title, and also that of~\cite{fg2018}.
More precisely, the scaling limit of the dynamics (when both volume and time diverge) under these time scales should be a {\em non trivial, ergodic process}. And under any longer (in leading order) time scale, the dynamics should converge to a product of the (infinite volume) equilibrium distribution (over time) --- in the present case, we should obtain such a longer time scale by multiplying the ergodic time scale by any factor which diverges in the scaling limit. 
In contrast, an {\em aging time scale} is a time scale under which the dynamics exhibits {\em aging} in the scaling limit, i.e., certain of its two-time correlation functions are functions of the {\em quotient} of the two times; this a phenomenon that takes place far from equilibrium, where such correlations would instead be functions of the time {\em difference}, and thus aging time scales should be shorter than ergodic time scales. 

The description of scaling limits at ergodic time scales involve K-processes of one kind or another. This is the case for the cascading 2-GREM of~\cite{fg2018}, and likewise for the present non-cascading case, as will be shown below. K-processes come up also at ergodic time scales of other models, such as trap models on large tori; see~\cite{jlt1},~\cite{jlt2},~\cite{cgl}.  
We briefly discuss these processes at the end of this section.

In order to understand the HRHD at ergodic time scales, we need to have a good grip on the structure of the lowest (2-GREM) energies underlying the model, since the dynamics {\em lives} on the corresponding spin configurations at that time scale. We have information for the full energy profile  from~\cite {bk}, but we need it broken down by hierarchy. Our first result, Theorem~\ref{Thm_1}  in the next subsection, gives an asymptotic description of the lowest 2-GREM energies by hierarchy, detailing the result in~\cite {bk} for the present non-cascading case. It says that in that case, while the full energies are at distances of order 1 apart from each other, as established in~\cite{bk}, their components by hierarchy are much farther apart, in a volume dependent way. This is in stark contrast with the cascading case, where both hierarchical components of the lowest total energies behave in this respect  similarly as their total. This makes for a different behavior of the noncascading dynamics at ergodic time scales in two related aspects: 
\begin{itemize}
	\item  as in the cascading case, a {\em fine tuning} temperature phenomenon takes place in the noncascading dynamics as well (in a certain region of the parameter space), {\em but}
	while in the cascading case the fine tuning range of temperatures is of order $N^{-1}$, in the noncascading case it is of order $N^{-1/2}$,
	where $N$ represents the volume;
	\item more strikingly, at and above fine tuning (low) temperatures, the ergodic time scale admits a range of values indexed by a real parameter $L$,
	which in a certain way {\em selects} which lowest energy configurations are visited by the dynamics at a given such time scale.
\end{itemize}

Differently from~\cite{fg2018}, we refrain in this paper to discuss aging behavior. One reason is concision, but perhaps the main reason is that we expect the behavior on those time scales to be no different than for the REM; there should be no surprises, in contrast to what we find at ergodic time scales. 

We present a detailed discussion of the above points in the following subsections.

The HRHD may be seen as Markov jump process on the hypercube $\{-1,1\}^N$ in a random environment. 
We start its description by the environment in the next subsection. A definition of the dynamics, with a discussion of its relevant characteristics, followed by our main (dynamical) results, occupy the remaining subsections of this introduction.

\subsection{The environment.}

The Generalized Random Energy Model (GREM) was introduced in~\cite{derrida1985generalization} as a hierarchical spin glass in equilibrium. We consider here the case with two hierarchies (the 2-GREM), which we specify next.

Given a natural number $N$ and $p\in(0,1)$ let us define $N_1 :=\left[pN\right]$ and $N_2:=N-N_1$. Consider, in $\mathcal{V}_N:=\{-1,1\}^N$, a vector $\sigma=\sigma_1\sigma_2$, where $\sigma_1\in\mathcal{V}_{N_1}:=\{-1,1\}^{N_1}$ and $\sigma_2\in\mathcal{V}_{N_2}:=\{-1,1\}^{N_2}$. For a given $a\in(0,1)$, and for each $N$, let us define the following Gaussian random variable
\begin{equation}\label{gaussians}
	\Xi_{\sigma}:=\sqrt{a}\Xi_{\sigma_1}^{(1)}+\sqrt{1-a}\Xi_{\sigma_1\sigma_2}^{(2)},
\end{equation}
where $\Xi=\{\Xi_{\sigma_1}^{(1)}, \Xi_{\sigma_1\sigma_2}^{(2)}:\sigma\in\mathcal{V}_N \}$ is a family of independent standard Gaussian random variables. This represents our \textit{random environment}. The variables on the family $\Xi$ are correlated in the following way: $\cov(\Xi_\sigma,\Xi_\tau)=0$ if $\sigma_1\neq\tau_1$ and $\cov(\Xi_\sigma,\Xi_\tau)=a$ if $\sigma_1=\tau_1$ and $\sigma_2\neq\tau_2$.  

The associated Gibbs measure at inverse temperature $\beta>0$ is the (random) measure $G_{\beta,N}(\sigma)=e^{\beta\sqrt{N}\Xi_\sigma}/Z_{\beta,N}$ where $Z_{\beta,N}$ is a normalization. The Hamiltonian or \textit{energy} of $\sigma$ would be, as usual, $H_N(\sigma)=-\sqrt{N}\Xi_\sigma$. We refer to the minima of $H_N(\cdot)$ as \textit{low energy} or \textit{low-lying} configurations. At low temperature, the Gibbs measure is essentially concentrated on the low-lying configurations, and at high temperature, the Gibbs measure is concentrated in a growing number of energy levels which get denser as $N\to\infty$.

There are two scenarios that may be distinguished: the \textit{cascading case} and the \textit{non-cascading case}. The cascading occurs when $a>p$, and the low-lying energies are{
	achieved} by adding up the low-lying energies of the two levels. The non-cascading case occurs when $a\leq p$ and it turns out that the correlations are{
	too} weak to have an impact on the extremes and the system ``collapses'' to a REM, see \cite{bk}. Also, in this case, the extremal configurations must differ in the first index, this phenomenon justifies the non-cascading denomination for our system.

As we mentioned before, we will consider the non-cascading 2-GREM evolving under the Randon Hopping Dynamics at ergodic time scales. Before introducing the dynamics we will go into the behavior of the environment.

Our main results involve the point process that we define below.

\begin{definition}\label{PPP}
	Let us call $\mathcal{P}=\left\{\xi_i,\, i\geq 1 \right\}$ the Poisson point process{
		on $\mathbb R$} with intensity measure  $Ke^{-x}dx$, with $K>0$, such that $\xi_i>\xi_{i+1}$ for all $i\geq 1$. 
	We note that this process has a finite maximum. 
\end{definition}

Concerning the environment, we need to establish a limit result for the first coordinate of the low-lying configurations. We mentioned above that in the non-cascading case the extremal values behaves like in the REM. Indeed, it was proved in Theorem 1.1 of \cite {bk} that, in the case $a\leq p$, the point process 
\begin{equation}\label{BK}
	\mathcal{P}_N:=\sum_{\sigma\in\mathcal{V}_{N_1}\times\mathcal{V}_{N_2}}\mathbb{1}_{u_N^{-1}(\Xi_{\sigma})}\to\mathcal{P}\text{~~in distribution, as~} N\to\infty,
\end{equation}
where $\mathcal{P}$ is the Poisson point process of Definition \ref{PPP}, with $K=1$ when $a<p$ and $K=\frac{1}{2}$ when $a=p$, and the function $u_N$ is the scaling function for the maximum of $2^N$ i.i.d. standard Gaussians, defined as 
\begin{equation}\label{def2}
	u_N(x)=\frac{x}{\beta_*\sqrt{N}}+\beta_*\sqrt{N}-\frac{\log N+\kappa}{2\beta_*\sqrt{N}},
\end{equation}
for $\kappa =\log\log 2+\log 4\pi$ and $\beta_*=\sqrt{2\log 2}$.

The constant $\beta_*$ above also coincides with the singularity point of the free energy function associated to the Gibbs measure. The low-temperature regime occurs when $\beta>\beta_*$. As we mentioned before, in this temperature region, the Gibbs measure becomes,{
	in the large volume limit}, fully concentrated on the set of low-lying configurations,{
	corresponding to the (suitably exponentiated and normalized) points of $\mathcal{P}$ --- see~(\ref{convergence_exp}) below}.

The convergence in \eqref{BK}, however, does not give us any information about the composition of the low-lying configurations when looking at the first and second level separately. The following result states the convergence of the first level low-lying configurations.

\begin{theorem}[Environment behavior]\label{Thm_1}  Let us relabel our{
		indices} $\sigma=(\sigma_1,\sigma_2)$ as $\sigma(i)=(\sigma_1(i),\sigma_2(i))$ in order to have $\Xi_{\sigma(1)}>\Xi_{\sigma(2)}>\cdots >\Xi_{\sigma(2^N)}$\footnote{This is well defined a.s.}. For $a\leq p$ and $\mathcal{P}=\{\xi_i,\,i\geq 1\}$  the Poisson point process in Definition \ref{PPP}, 
	we have that for every $k\geq1$
	\begin{align*}
		\left( u_N^{-1}(\Xi_{\sigma(1)}), \Xi^{(1)}_{\sigma_1(1)}-\sqrt{aN}\beta_*;\dots;u_N^{-1}(\Xi_{\sigma(k)}), \Xi^{(1)}_{\sigma_1(k)}-\sqrt{aN}\beta_*\right)
	\end{align*}
	converges in distribution to $\left(\xi_1,W_1;\dots;\xi_k,W_k\right)$, where, for $a<p$, we have $K=1$ and $W_1,\cdots,W_k$ are independent Gaussian variables with mean zero and variance $1-a$; which are also independent of the process $\mathcal{P}$, and for $a=p$, we have $K=\frac{1}{2}$ and the variables $W_1,\cdots,W_k$ are now conditioned on being negative.
\end{theorem}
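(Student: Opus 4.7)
The proof rests on the orthogonal Gaussian decomposition
\[
\Xi^{(1)}_{\sigma_1} = \sqrt a\,\Xi_\sigma + W_\sigma,\qquad W_\sigma := (1-a)\,\Xi^{(1)}_{\sigma_1} - \sqrt{a(1-a)}\,\Xi^{(2)}_{\sigma_1\sigma_2},
\]
where a direct covariance computation shows $W_\sigma\sim\mathcal N(0,1-a)$ with $W_\sigma\perp\Xi_\sigma$ for each individual $\sigma$ (though not across different $\sigma$ sharing the same $\sigma_1$). Since $u_N(x)-\beta_*\sqrt N\to 0$ for fixed $x$, the convergence~(\ref{BK}) gives $\Xi_{\sigma(i)}-\beta_*\sqrt N\to 0$ in probability, and hence
\[
\Xi^{(1)}_{\sigma_1(i)}-\sqrt{aN}\,\beta_* = \sqrt a\bigl(\Xi_{\sigma(i)}-\beta_*\sqrt N\bigr) + W_{\sigma(i)} = W_{\sigma(i)} + o_\P(1).
\]
The theorem thus reduces to joint convergence of $\bigl(u_N^{-1}(\Xi_{\sigma(i)}),\,W_{\sigma(i)}\bigr)_{i=1}^k$ to $(\xi_i,W_i)_{i=1}^k$ with the prescribed marginals.

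The next ingredient is a non-collision claim: with probability tending to $1$, the indices $\sigma_1(1),\dots,\sigma_1(k)$ are pairwise distinct. For $a<p$ this follows from a first-moment estimate on pairs $(\sigma,\sigma')$ with $\sigma_1=\sigma'_1$ and $\sigma_2\ne\sigma'_2$: conditionally on $\Xi^{(1)}_{\sigma_1}=x$, $\Xi_\sigma$ and $\Xi_{\sigma'}$ are independent $\mathcal N(\sqrt a x,\,1-a)$, so integrating the joint upper-tail probability against the density of $\Xi^{(1)}_{\sigma_1}$ and summing over the $2^{N_1}\binom{2^{N_2}}{2}$ such pairs produces a bound that vanishes whenever $a<p$ --- essentially the computation underlying Theorem 1.1 of~\cite{bk}. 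For $a=p$ one needs the sharper boundary version of the same estimate, also available in~\cite{bk}. On the non-collision event the $W_{\sigma(i)}$ involve pairwise disjoint subfamilies of $\{\Xi^{(1)},\Xi^{(2)}\}$, so they are (conditionally) mutually independent.

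I would conclude via the marked point process
\[
\widetilde{\mathcal P}_N := \sum_{\sigma\in\mathcal V_N}\delta_{(u_N^{-1}(\Xi_\sigma),\,W_\sigma)},
\]
showing that its restriction to $(M,\infty)\times\R$ converges, for $a<p$, to a Poisson point process of intensity $e^{-x}\,dx\otimes\phi_{1-a}(w)\,dw$, where $\phi_{1-a}$ is the $\mathcal N(0,1-a)$ density; and for $a=p$, to a Poisson process of intensity $e^{-x}\phi_{1-a}(w)\mathbb{1}_{w\le 0}\,dx\,dw$ on $(M,\infty)\times(-\infty,0]$, whose first marginal is $\tfrac12 e^{-x}$, matching $K=\tfrac12$ and the conditioning $W_i\le 0$. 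Standard PPP convergence criteria reduce this to (i) intensity convergence, $\E[\widetilde{\mathcal P}_N((M,\infty)\times B)]=2^N\,\P(\Xi_\sigma>u_N(M))\,\P(W_\sigma\in B)$, immediate from~(\ref{BK}) and the pointwise independence $W_\sigma\perp\Xi_\sigma$; and (ii) asymptotic simplicity, supplied by the non-collision step. Reading off the top $k$ points in the first coordinate then yields the stated joint limit. The main technical hurdle will be the $a=p$ case, where $\max_{\sigma_1}\Xi^{(1)}_{\sigma_1}$ lies precisely at $\sqrt{aN}\beta_*+o(1)$: one must show that no top $\sigma$ can carry a positive $W_\sigma$-mark in the limit and match the intensity halving with the truncation to $w\le 0$, which requires the fine Gaussian estimates already developed in~\cite{bk}.
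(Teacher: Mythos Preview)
Your approach is genuinely different from the paper's, and in several respects more elegant. The paper proceeds by discretizing the range of $\Xi^{(1)}_{\sigma_1}$ into intervals $I_N^j$ of width $N^{-(1/2+\delta)}$ around $\sqrt{aN}\beta_*$, studying the block maxima $M_N^j=\max\{\Xi_\sigma:\Xi^{(1)}_{\sigma_1}\in I_N^j\}$, replacing the random counts $\mathcal C(I_N^j)$ by their expectations, and then proving Poisson convergence for the resulting (now independent) array via explicit Gaussian computations. Your orthogonal decomposition $\Xi^{(1)}_{\sigma_1}=\sqrt a\,\Xi_\sigma+W_\sigma$ bypasses the discretization entirely and identifies the limiting mark distribution $\mathcal N(0,1-a)$ directly from the covariance structure; combined with the natural block decomposition over $\sigma_1$, this reduces the problem to a Grigelionis-type criterion for superpositions of i.i.d.\ sparse point processes. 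The paper's route is more self-contained and makes the $a=p$ boundary case transparent (one simply restricts to $j\le -1$); your route is shorter and more conceptual, but leans on a general PPP convergence theorem that you should name explicitly --- ``intensity convergence plus simplicity'' is not by itself a valid criterion.

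There is, however, a genuine gap in your non-collision step. The first-moment bound on colliding pairs that you sketch is
\[
2^{N_1}\binom{2^{N_2}}{2}\,\P\bigl(\Xi_\sigma>u_N(M),\,\Xi_{\sigma'}>u_N(M)\bigm|\sigma_1=\sigma_1'\bigr)
\ \sim\ \frac{C}{N}\,2^{N\left[(2-p)-\tfrac{2}{1+a}\right]},
\]
using the standard tail asymptotic $\P(X>t,Y>t)\sim c\,t^{-2}e^{-t^2/(1+a)}$ for a bivariate Gaussian with correlation $a$. This exponent is negative only when $p>\tfrac{2a}{1+a}$, which is strictly stronger than $p>a$; for $a<p\le\tfrac{2a}{1+a}$ the expected number of colliding pairs diverges. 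The divergence is driven by rare $\sigma_1$ with $\Xi^{(1)}_{\sigma_1}-\sqrt{aN}\beta_*$ of order $\sqrt N$, each contributing exponentially many pairs. What you actually need for the Grigelionis criterion is the smaller quantity $\sum_{\sigma_1}\P(B(\sigma_1)\ge 2)$, where $B(\sigma_1)=\#\{\sigma_2:\Xi_{\sigma_1\sigma_2}>u_N(M)\}$; bounding $\P(B\ge 2)\le\min(1,\tbinom{2^{N_2}}{2}q^2)$ and splitting at the level where $2^{N_2}q\asymp 1$ does give a bound $O(2^{-(p-a)^2N/(4a)})$ valid for all $a<p$. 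So your strategy is sound, but the specific estimate you invoke fails in part of the non-cascading regime and must be replaced by this truncated version.
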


\begin{remark}
	Theorem \ref{Thm_1} can be written considering instead the second level of the low-lying configurations, obtaining an analogous result 
	in which the{
		centered} Gaussian variables in the limit are also independent, but with variance $a$. 
\end{remark}

{
	\begin{remark}
		It follows from Theorem \ref{Thm_1} that in the limit $\{\sigma_1(i) : i\geq1\}$ are all distinct. In other words, for each 
		$\sigma_1(i)$, we have a single $\sigma_2(i)$, and we thus say that 	$\sigma_1(i)$ are $\sigma_2(i)$ are {\em matched}. 
		This is in contrast with the cascading phase, where to each 
		$\Xi^{(1)}_{\sigma_1(i)}$, there corresponds (in the limit, infinitely) many ${\Xi^{(2)}_{\sigma_2(\cdot)}}$'s.
	\end{remark}
	
	\begin{remark}
		A similar issue as the one treated in Theorem \ref{Thm_1}  arises in the analysis of  the maximum of two-speed branching Brownian motion, which is a model that is quite similar, and indeed related, to the 2-GREM; see \cite{bh}. The proof of  Theorem 1.2 of the latter reference goes through an estimation of two components of maxima, similarly as in our proof of our Theorem \ref{Thm_1}, and a range of square root of the leading order of the contributions to the maxima, around such leading order, comes up as well; see (outline of) the proof of Theorem 1.2 of \cite{bh}. For the analysis of the HRHD of the 2-GREM we need however precise location of such contributions, beyond the order of magnitude of such range; this is provided by our Theorem \ref{Thm_1} above, and, as far as we can see, not in \cite{bh}, or elsewhere.
	\end{remark}
	
	For later use, let us denote 
	\begin{equation}\label{gamma}
		\gamma^N(\sigma_1\sigma_2)=e^{\frac{\beta}{\beta_*}u_N^{-1}(\Xi_{\sigma_1\sigma_2})}\text{~~~~~~~~~and~~~~~~~~~~}\gamma_i=e^{\frac{\beta}{\beta_*}\xi_i}.
	\end{equation}
	When $\sigma_1\sigma_2=\sigma(i)$, we denote $\gamma^N_i=\gamma^N(\sigma_1\sigma_2)$. 
	
	As a consequence of \eqref{BK} we have 
	
	\begin{equation}\label{convergence_exp}
		\lim_{N\to\infty}\Big\{\sum_{\sigma_2\in\mathcal{V}_{N_2}}\gamma^N(\sigma_1(i)\sigma_2),\,i\geq1\Big\}
		=\G:=\big\{\gamma_i,\,i\geq1\big\},\text{~~~~~~in distribution.}
	\end{equation}
	
	We note that $\G$ is a Poisson point process in $[0,\infty)$ with intensity $\frac\alpha{x^{1+\alpha}}\,dx$,
	where $\alpha=\frac{\beta_*}\beta$; for $\beta>\beta_*$, the sum over the points of $\G$ is a.s.~finite.

	\subsection{{Dynamics.}}

	We consider the Hierarchical Random Hopping dynamics (HRHD)}, which is a Markov jump process $\{\sigma^N(t),\,t>0\}$ that evolves in $\mathcal{V}_N$ with transition rates given by 
\begin{equation}\label{dynamics}
	w_N(\sigma,\sigma')=\frac{e^{-\beta\sqrt{N}\Xi_\sigma}}{N}\mathbb{1}_{\sigma \stackrel{1}{\sim} \sigma'}+\frac{e^{-\beta\sqrt{(1-a)N}\Xi^{(2)}_\sigma}}{N}\mathbb{1}_{\sigma \stackrel{2}{\sim} \sigma'}
\end{equation}
and $w_N(\sigma,\sigma')=0$ else, where, for $i=1,2$, we say that $\sigma \stackrel{i}{\sim} \sigma'$ iff $\sigma\sim \sigma'$ and $\sigma_i \sim \sigma'_i$. Here, $\sigma\sim \sigma'$ indicates that $\sigma$ and $\sigma'$ differs in exactly one coordinate. We say, in this context, that $\sigma$ and $\sigma'$ are nearest neighbors in $\mathcal{V}_N$. 

The	low-temperature 2-GREM in the non-cascading case, as well as in the cascading case, exhibits,
depending on the values of $p$ and $a$, a dynamical phase transition in the HRHD at ergodic time scales that
may be described as a \textit{fine tuning} phenomenon.  

\subsubsection{Fine tuning; heuristics}\label{heuristics}
There are two competing factors governing the behavior of the HRHD {\em at ergodic time scales}. One factor is $\#_2$, the number of jumps until $\sigma^N$ finds a second level low-lying configuration;
this factor is of order $2^{N_2}$. The other factor is $\#_1$, the number of visits 
by $\sigma^N$ to a first level low-lying configuration $\sigma_1$ before leaving it. 
This is a geometric random variable with mean $1+\frac{N_2}{N_1}e^{\beta\sqrt{aN}\Xi_{\sigma_1}^{(1)}}$, which, by Theorem \ref{Thm_1}, is of order $e^{\beta\beta_*aN+\beta\sqrt{aN}W}$, where $W$ is a centered Gaussian random variable with variance $1-a$. This differs with the case $a>p$, studied in \cite{fg2018}, where the first level of a low-lying configuration is exponentially large in $N$ (namely, {
	of  order $e^{\beta\beta_*\sqrt{pa}N}$), but with a deterministic inverse square root correction, instead of the present correction, which is random and stretched exponential.

	The Gaussian random variable $W$ that appears in our case will play a crucial role in the behavior of the process $\sigma^N$ at low temperature at or above fine tuning temperatures. The first effect is in the definition of the ergodic time scale in those regimes, which will incorporate a threshold $L$ in the exponential root scale. This will effectively select low-lying configurations whose $W$ values exceed the threshold, and those configurations are the only low-lying ones visited by the dynamics in that time scale. We thus get a family of ergodic time scales, indexed by a real parameter $L$, the scaling limit of the dynamics under which yields an ergodic infinite volume dynamics supported on the selected low-lying configurations for each $L$. 
	
	The analysis in the paragraph before last leads us to define a family of volume dependent critical parameters  
	\begin{equation}\label{beta_{FT}}
		\beta_{FT}:=\frac{(1-p)}{2a}\,\beta_*-\frac\theta{\sqrt N},\,\theta\in\R,\mbox{ and }\bar\beta_{FT}:=\frac{(1-p)}{2a}\,\beta_*.
	\end{equation}
	
	Let us assume henceforth that $\bar\beta_{FT}>\beta_*$;
	we will consider three different temperature regions: when $\beta$ is larger than $\bar\beta_{FT}$, equal to $\beta_{FT}$ and smaller than $\bar\beta_{FT}$.

	\begin{enumerate}
		\item [(i)] At relatively low temperature, when $\beta>\bar\beta_{FT}$, then $\#_1\gg\#_2$, and while staying at a first level low-lying configuration, the process has time to reach the matching second level low-lying many times until it changes the first component. Here we have a similar situation as the one described in Theorem 2.7 of \cite{fg2018}. 
		
		\item[(ii)] At fine tuning $\beta=\beta_{FT}$, as pointed out above, the ergodic time scale has a threshold parameter $L=L_\theta$; then, we have $\#_1\gg\#_2$ when $W>L$, and $\#_1\ll\#_2$ when $W<L$. Hence for some low-lying configurations the process has time to reach the second level maximum
		(many times) during each visit to the first level of such configurations, and there are other ones for which the first component changes before the process hits the second level ground configurations. So, in this case the HRHD behaves like for $\beta>\bar\beta_{FT}$, except that it does not visit part of the set of low-lying configurations
		(those for which $W<L$), but rather spends virtually all of the time in the complementary set of low-lying configurations.
		%
		%
		\item[(iii)] At relatively high temperature, when $\beta_*<\beta <\bar\beta_ {FT} $, we have that $\#_1\ll\#_2$; then, after leaving a low-lying configuration, $ \sigma^N $ will visit many first level low-lying configurations without finding the matching second level low-lying configuration.  We again have the threshold-$L$ time scale,and again low-lying configurations with $W<L$ are not visited. The difference here with respect to the fine tuning regime is that each low-lying configuration that gets visited, is visited many times, in such a way that every time a configuration with a certain $W'$ is left, before it returns, the ones with values of $W$ larger than $W'$ are visited many times; 
		however, there is a compensating factor in that the dynamics spends much more time in each visit to configurations with smaller $W$'s, so that the total time spent 
		on visits to low-lying configurations with  $W$'s larger than $W'$ between visits to the low-lying configuration corresponding to  $W'$ is comparable to that of a single visit to the latter configuration.	
		This provides a mixing mechanism for the visits to the low-lying configurations which results in the infinite volume dynamics being in a sort of {\em partial equilibrium}, or equilibrium in only a part of the configuration space 
		(namely, those configurations with $W>L$).
	\end{enumerate}

	More details and precise statements are given below.

	\begin{remark}
		In the case where $\bar\beta_{FT}\leq\beta_*$, the fine tuning phenomenon takes place at high/critical temperature, and since we are looking only at subcritical temperatures, we only have the behavior described in (i), in that case.
	\end{remark}

	\subsection{Scaling limit results for the dynamics.}\label{section.main}
	
	In this section we will state our limit results for the dynamics according to three different conditions on the temperature parameter $\beta$: when $\beta$ is larger, equal or smaller than the parameter $\beta_{FT}$, defined in \eqref{beta_{FT}}. We will use the convergence stated in Theorem \ref{Thm_1}, which is weak in the original environment; however, via Skorohod's Theorem, we are able to make it a strong convergence by going to another, suitable probability space. From now on, we will assume that we are in the probability space where the strong convergence takes place, and omit further reference to it.
	
	For each $N$, we relabel the indices $\sigma=(\sigma_1,\sigma_2)$ as $\{ \sigma(1),\dots,\sigma(2^N) \}$ such that $\Xi_{\sigma(1)}>\Xi_{\sigma(2)}>\cdots>\Xi_{\sigma(2^N)}$ (such notation was already used in Theorem \ref{Thm_1}). Let us define the function $\phi^N:\mathcal{V}_N\to\mathbb{Z}_+$ as  
	\begin{equation*}
		\phi(\sigma)=\phi(\sigma_1\sigma_2):=\min\{i=1,\dots, 2^N:\,\sigma_1(i)=\sigma_1\}\, ,
	\end{equation*}
	and the process $X^N$ as
	\begin{equation}\label{def1}
		X^N(t):=\phi(\sigma^N(t))\, , \text{ for all } t\ge 0\, .
	\end{equation}
	
	Our scaling limit results will be stated for the process $X^N$ for the convenience of working with a state space which naturally extends to the set of natural numbers, which will be the state space of the limiting processes. Recall the assumption made right below~(\ref{beta_{FT}}). We will assume a uniform initial distribution for $\sigma^N(t)$, for the convenience that the
	uniform distribution on $\mathcal{V}_{N_i}$ is invariant for the random walk on the hypercube $\mathcal{V}_{N_i}$, $i=1,2$.
	It is a simple, if cumbersome matter to extend our results to most other initial distributions.
	
	{
		Let $\mathcal{P}=\{\xi_i:i\geq 1\}$ and $\W:=\{W_i,i\geq 1\}$ be as in Theorem \ref{Thm_1}. 
		Given $L\in \mathbb{R}$ when $a<p$, and $L<0$ when $a=p$, let us define
		\begin{equation}\label{c_N}
			c_N=c^L_N=c^L_N(\beta):= e^{\beta\left(\beta_*N- \frac{\log N +\kappa}{2\beta_*}\right)}e^{-\beta(\beta_*aN+\sqrt{aN}L)},
		\end{equation}
		where $\kappa$ is the same as in \eqref{def2}.
		
		Recall~(\ref{convergence_exp}) and set $\N_L=\{i\geq1 : W_i>L\}$, $\G_L=\{\gamma_i : i\in\N_L\}$, $\pi^L_\ell=\frac{\gamma_\ell}{\sum_{i\in\N_L}\gamma_i}$, $\ell\in\N_L$.
		
		\begin{theorem}[Above fine tuning temperatures]\label{Thm_above_FT} 
			For $\beta_*<\beta<\bar\beta_{FT}$, $t>0$ and $\ell\in\N_L$, we have that
			\begin{equation*}
				\lim_{N\to\infty}\frac{1}{c_Nt}\int_{0}^{c_Nt}\mathbb{1}_{\{X^N(s)=\ell\}}ds = \pi^L_\ell\, . 
			\end{equation*}
			The convergence above holds in probability.	
		\end{theorem}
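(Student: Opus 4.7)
The plan is to prove the convergence in probability via a first-moment plus variance argument, exploiting reversibility of $\sigma^N$ with respect to the Gibbs measure $G_{\beta,N}$, together with a careful identification of which configurations are actually visited by the dynamics on the time scale $c_Nt$. First I truncate: set $\N_L^n=\N_L\cap\{1,\ldots,n\}$, and reduce to proving the analog of the theorem with $\pi^L_\ell$ replaced by $\gamma_\ell/\sum_{i\in\N_L^n}\gamma_i$ for each fixed $n$, letting $n\to\infty$ at the end; the tail correction is controlled by the a.s.\ summability of $\sum_i \gamma_i$ (coming from the Poisson intensity of $\G$ and $\beta>\beta_*$) together with the fact that the $W_i$'s are i.i.d.\ with full support (so $|\N_L^n|\to|\N_L|$ in an appropriate sense).

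For the first-moment term $\frac{1}{c_Nt}\int_0^{c_Nt}P(X^N(s)=\ell)\,ds$, the key observation is that full equilibration with respect to $G_{\beta,N}$ would give $\gamma_\ell/\sum_{i\geq 1}\gamma_i$, which is \emph{not} the claimed limit $\pi^L_\ell$. The calibration~(\ref{c_N}) of $c_N$ is tuned precisely so that on the scale $c_Nt$ the dynamics explores only the deep slices $\bigcup_{i\in\N_L}\{\sigma_1(i)\}\times\mathcal{V}_{N_2}$. I would decompose the argument into three ingredients: (a) from the uniform initial distribution, $\sigma^N$ enters $\bigcup_{i\in\N_L^n}\{\sigma_1(i)\}\times\mathcal{V}_{N_2}$ on a time scale $o(c_N)$; (b) within this union of slices, $\sigma^N$ reaches local equilibrium with respect to $G_{\beta,N}$ restricted to the set in time $o(c_N)$, and by~(\ref{convergence_exp}) this restricted measure concentrates on $\{\sigma(\ell):\ell\in\N_L^n\}$ with weights $\propto \gamma_\ell^N$, since within each slice the matched second-level ground $\sigma(i)$ dominates the Gibbs mass; (c) configurations $\sigma(i)$ with $i\notin\N_L$ are not visited in time $c_Nt$, since by Theorem~\ref{Thm_1} the mean first-hitting time of $\sigma(i)$ from any deep trap is of order $e^{\beta\beta_*N}e^{-\beta\sqrt{aN}W_i}$, which exceeds $c_N$ precisely when $W_i<L$, again by~(\ref{c_N}).

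For the variance, reversibility yields the standard Poincar\'e-type bound $\mathrm{Var}\bigl(\tfrac{1}{c_Nt}\int_0^{c_Nt}\mathbb{1}_{\{X^N(s)=\ell\}}ds\bigr) \leq C\,\tau^N_{\mathrm{mix}}/(c_Nt)$, where $\tau^N_{\mathrm{mix}}$ is the mixing time on the restricted state space from (a)--(b); by (b) this is $o(c_N)$, so the variance vanishes, giving convergence in probability. The main obstacle is item (c): pinning down the threshold $W_i=L$ requires quantitative hitting-time estimates for $\sigma^N$ on each slice $\{\sigma_1(i)\}\times\mathcal{V}_{N_2}$, balancing the random holding time $\sim e^{\beta\sqrt{aN}\Xi^{(1)}_{\sigma_1(i)}}$ in that slice against the deterministic $\sim 2^{N_2}$ sub-visits needed to hit the matched second-level ground $\sigma_2(i)$. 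Theorem~\ref{Thm_1} supplies exactly the joint asymptotics of $(\Xi_{\sigma(i)},\Xi^{(1)}_{\sigma_1(i)})$ that make this balance identify the cutoff in~(\ref{c_N}), and getting a sharp (rather than just order-of-magnitude) match of the two sides is where the technical heart of the proof should lie.
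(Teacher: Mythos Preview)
Your approach has a genuine gap in the variance step, and the local-equilibrium claim~(b) that feeds into it is essentially what the paper says it could \emph{not} prove. The Poincar\'e-type bound you invoke needs the relaxation time of the process (or of its trace on the slices) to be $o(c_N)$; but within each slice $\{\sigma_1(i)\}\times\mathcal{V}_{N_2}$ the restricted Gibbs measure is concentrated at $\sigma_2(i)$, and in the regime $\beta<\bar\beta_{FT}$ the number of second-level steps per visit, $G^N(i)\sim e^{\beta\beta_\ast aN+\beta\sqrt{aN}W_i}$, is $\ll 2^{N_2}$, so the process does \emph{not} equilibrate within a slice on a single visit. Equilibration across slices then requires a number of visits that differs by stretched-exponential factors from one $i$ to another, and the resulting relaxation time is at least of order $c_N$, not $o(c_N)$. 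This is exactly the ``increasingly fractal nature of the successive visits'' that the Remark following Theorem~\ref{Thm_above_FT} flags: single-time (hence finite-dimensional) convergence of $X^N(c_Ns)$ to $\pi^L$ is left open, and your scheme would prove precisely that stronger statement if it worked.

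The paper circumvents this by never appealing to mixing. It fixes $M$, restricts to $X^N_M$ on $I_M\cup J_M$, and runs a renewal LLN indexed by returns of $J^{1,N}$ to $\sigma_1(i_1)$: the blocks $F^N_{m,k}$ (time at $i_m$ between returns) and $Q^N_{m,k}$ (time at $j_m$) are identically distributed but not independent, so the variance is controlled by a coupling of $J^{2,N}$ to uniform after $N^3$ steps (yielding i.i.d.\ surrogates $\breve F^{2,N}_{m,k}$) together with a direct second-moment bound. The contribution of $j_m$ with $W_{j_m}<L$ is killed not via a hitting-time-from-a-trap estimate as in your item~(c), but by bounding, via Kemperman's formula, the probability that during a \emph{single} visit to $\sigma_1(j_m)$ the geometric number of second-level steps exceeds the hitting time of $\sigma_2(j_m)$; this probability, times the number $b_N$ of renewal blocks up to time $c_N$, vanishes. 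A separate first-moment estimate (Proposition~\ref{prop_4.2}) then shows the time outside $I_M\cup J_M$ is negligible as $M\to\infty$. If you want to salvage your outline, you would need to replace the spectral-gap variance bound by this kind of block-decomposition argument.
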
	
		
		\begin{remark}
			Theorem~\ref{Thm_above_FT} follows from an analysis based on the heuristics described in (iii) of Subsubsection~\ref{heuristics}. It suggests that the single-time (and also the finite dimensional) distributions of $\{X^N(c_Nt),\,t\geq0\}$ converge to (products of) $\pi^L_\cdot$; however, we did not find a way to show that; the main difficulty issues from the increasingly fractal nature of the successive visits of the process to the relevant low-lying configurations as $N\to\infty$: as indicated in the above mentioned heuristics, we have different increasingly  incomparable single time scales for different such configurations in each visit; furthermore, these single time scales are increasingly infinitesimal with respect to the overall time scale, but add up to it when summed over all visits, and successive visits to a given such configuration are interspersed with an increasing number of visits to other such configurations. 
		\end{remark}
		
		\begin{theorem}[At fine tuning temperature]\label{Thm_at_FT} 
			For $\beta=\beta_{FT}$, with $\theta=\frac{1-p}{2a^{3/2}}L$ in~\eqref{beta_{FT}}, we have that $\{X^N(c_Nt),\,t\geq0\}$ converges in distribution as $N\to\infty$ to a K-process on $\bar\N_L=\N_L\cup\{\infty\}$ with parameter set $\G_L$, starting from $\infty$.
		\end{theorem}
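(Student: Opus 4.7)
The plan is to adapt the methodology of~\cite{fg2018} for the cascading case, incorporating the $L$-threshold selection mechanism that characterizes the non-cascading fine-tuning regime, as described in heuristic~(ii) of Subsubsection~\ref{heuristics}. The strategy is to isolate, for each fixed $k\geq 1$, the effective trap set $\mathcal{T}_k^N := \{\sigma_1(i) : 1\leq i\leq k,\, W_i>L\}$, which by Theorem~\ref{Thm_1} corresponds in the limit to $\N_L\cap\{1,\dots,k\}$; then analyze separately the time spent at each such trap (the holding-time structure) and the sequence of traps visited (the jump chain); and finally pass to the limit $k\to\infty$ to obtain the full K-process on $\bar\N_L$.

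For the holding-time analysis, consider a single \emph{visit block} to $\sigma_1(i)$ with $i\in\N_L$, i.e.~a maximal interval during which $\phi(\sigma^N(t))=i$. By the calibration $\beta=\beta_{FT}$ with $\theta=\frac{(1-p)}{2a^{3/2}}L$ in~\eqref{beta_{FT}}, combined with Theorem~\ref{Thm_1}, we have $\#_1\gg\#_2$ precisely when $W_i>L$; so during such a block the process executes many sub-excursions in $\mathcal{V}_{N_2}$ that reach the matched second-level minimum $\sigma(i)$, each contributing a holding time at $\sigma(i)$ of order $N^{-1}e^{\beta\sqrt N\Xi_{\sigma(i)}}$. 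Summing these contributions and normalizing by $c_N$, a concentration/law-of-large-numbers argument should show that the rescaled total block time converges in distribution to an exponential random variable with mean proportional to $\gamma_i$, matching the K-process holding law at state $i$. For $i\notin\N_L$ the block is too short (only $\#_1$ sub-excursions, with $\#_1\ll\#_2$), so the rescaled block duration is $o(1)$ and such sites do not appear in the limit.

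Next, I would analyze the jump chain of successive visited indices. Between visits to $\mathcal{T}_k^N$, the process $\sigma^N$ wanders through configurations with $i>k$ or $W_i<L$, with aggregate wandering time of order $o(c_N)$ by the block-length estimate above together with a bound on the number of such wandering excursions. Using a uniform-mixing argument for the random walk projected onto $\mathcal{V}_{N_1}$ (which mixes on a scale much shorter than $c_N$), I would show that the next trap visited in $\mathcal{T}_k^N$ is drawn, asymptotically independently of the past, with probability $\gamma_\ell/\sum_{j\leq k,\,j\in\N_L}\gamma_j$, matching the K-process selection mechanism on the truncated state space. Passing to $k\to\infty$, noting monotone consistency of the truncated processes, yields the full K-process on $\bar\N_L$ with parameter set $\G_L$. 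Finally, the ``starting from $\infty$'' statement is a consequence of the uniform initial distribution, since with high probability $\sigma^N(0)\notin\mathcal{T}_k^N$ for every fixed $k$, so the limit process starts in transit.

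I expect the main obstacle to be controlling the aggregate contribution of non-trap configurations --- those with $W_i<L$ or $i>k$. Unlike the cascading case, many genuinely low-lying configurations are visited by $\sigma^N$ on sub-$c_N$ time scales (specifically, those with $W<L$), and their combined time contribution must be bounded uniformly in $N$ and $k$ without disturbing the excursion structure around the retained traps of $\mathcal{T}_k^N$. A related subtlety is the asymptotic independence of successive jumps in the chain, which should follow from a decoupling argument combining the rapid mixing of the first-level walk during wandering phases with uniform tail estimates on configurations with index $i>k$, in the spirit of the analogous step in~\cite{fg2018}, but suitably adapted to handle the $W$-dependent threshold cutoff.
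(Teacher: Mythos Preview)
Your overall strategy matches the paper's closely: truncate to a finite set $\mathcal{M}$, show that the rescaled holding time at each state $i$ with $W_i>L$ is asymptotically exponential with mean $\gamma_i$, that states with $W_i<L$ contribute vanishing rescaled holding time, and that time spent outside $\mathcal{M}$ is negligible, then let the truncation level go to infinity. The paper is very concise: it invokes Corollary~1.5 of~\cite{gayrard2008} for the jump-chain input and otherwise recycles the arguments of Proposition~\ref{time_spent_in_each_visit_to_$M_1$_above_FT} and Subsection~\ref{out}. The one genuinely new ingredient is Lemma~\ref{conv_at_FT}(b), which shows via Kemperman's formula that $\Upsilon_i^N=0$ with high probability when $W_i<L$, i.e.\ the second-level walk never reaches the matched minimum before the first level changes; this is the precise mechanism behind your ``block too short'' heuristic.

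There is, however, a conceptual error in your jump-chain step. You claim that the next trap in $\mathcal{T}_k^N$ is drawn with probability $\gamma_\ell/\sum_{j}\gamma_j$ and that this ``matches the K-process selection mechanism''. It does not. In the K-process of~\cite{fm} (see the discussion in the K-processes subsection), transitions are \emph{uniform} in a precise limiting sense; the $\gamma$'s govern only the holding times, not the jump probabilities. On the finite-volume side this corresponds exactly to what~\cite{gayrard2008} gives: the first-level walk $J^{1,N}$ visits the configurations $\sigma_1(i)$, $i\in\mathcal{M}$, in approximately uniform order, and this remains uniform when one traces on the subset $\{i:W_i>L\}$ (since the skipped states are themselves visited uniformly). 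The paper therefore does not carry out a separate jump-chain analysis at all --- it simply cites the uniform-visiting result and focuses on the holding times. Your uniform-mixing intuition is right, but it leads to uniform selection, not $\gamma$-weighted selection; had you actually established the weighted law you state, the limit would not be the K-process of Theorem~\ref{Thm_at_FT}.
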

		
		We have a longer ergodic time scale for the next result. Let 
		\begin{equation}\label{bar_c_N}
			\bar c_N=2^{-N_2}e^{\beta(\beta_*N - \frac{\log N +\kappa}{2\beta_*})}.
		\end{equation}
		
		\begin{theorem}[Below fine tuning temperatures]\label{Thm_below_FT} 
			For $\beta>\bar\beta_{FT}$, we have that $\{X^N(\bar c_Nt),\,t\geq0\}$ converges in distribution as $N\to\infty$ to a K-process on $\bar\N=\N\cup\{\infty\}$ with parameter set $\G$, starting from $\infty$.
		\end{theorem}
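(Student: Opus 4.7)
The plan is to follow the strategy of Theorem 2.7 of \cite{fg2018} (the below-fine-tuning result for the cascading case), adapted to the scaling $\bar c_N$ and to the non-cascading structure. The driving structural fact is heuristic (i) of Subsubsection~\ref{heuristics}: in the regime $\beta > \bar\beta_{FT}$ we have $\#_1 \gg \#_2$, so each slice $\mathcal S_i := \{\sigma_1(i)\sigma_2' : \sigma_2' \in \mathcal V_{N_2}\}$ acts, on the time scale $\bar c_N$, as a single effective trap whose depth converges to $\gamma_i$; transitions between slices happen via a mixing that in the limit manifests as excursions through the state $\infty$. The first step is to truncate: for fixed $M$, introduce the collapsed process $X^{N,M}$ obtained by identifying all states $i > M$ with a single state $\infty$, and establish that $\{X^{N,M}(\bar c_N t),\,t\geq 0\}$ converges in law to the finite-state K-process on $\{1,\dots,M,\infty\}$ with parameters $(\gamma_1,\dots,\gamma_M)$.

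The truncated convergence reduces to two technical claims. First, for each $i\leq M$, the time $\tau_i^N$ that $\sigma^N$ spends in $\mathcal S_i$ during a single visit, rescaled by $\bar c_N$, converges in distribution to an exponential random variable with mean $\gamma_i$. Since $\#_1 \gg \#_2$, the second-level dynamics inside $\mathcal S_i$ visits the bottom configuration $\sigma(i)$ many times before the first coordinate flips, so the slice reaches a quasi-equilibrium concentrated on $\sigma(i)$; computing the effective first-coordinate exit rate against this quasi-equilibrium yields $(\bar c_N\gamma_i^N)^{-1}(1+o(1))$, and the exponential limit then follows by standard arguments for hitting times of rare events from equilibrium. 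Second, after $\sigma^N$ exits $\mathcal S_i$, the next slice $\mathcal S_j$ it visits should satisfy $\mathbb P(\text{next slice} = \mathcal S_j) \to \gamma_j / \sum_{k\geq 1}\gamma_k$ for $j \leq M$ (with the complementary probability assigned to $\infty$), independently of the starting index $i$. The argument is that after exit, $\sigma^N$ is at a typical-energy configuration and performs near-simple random walk on $\mathcal V_N$ until next hitting some $\mathcal S_j$; this search takes $o(\bar c_N)$ clock time but sufficiently many steps to erase memory of the starting slice, and the hitting probabilities weight each $\mathcal S_j$ by its effective ``size'' $\gamma_j^N$.

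The main obstacle is this second claim, the uniform-exit/mixing property, which requires precise hitting-time and hitting-probability estimates for rare sets under random-hopping dynamics on the hypercube, in a regime where the deepest traps are selected by the Poisson point process $\G$ while the bulk dynamics essentially resembles a simple random walk. A new technical input compared to \cite{fg2018} is that the depths $\gamma_i^N$ carry a random stretched-exponential correction of order $e^{\beta\sqrt{aN}W_i}$ coming from Theorem~\ref{Thm_1}; fortunately, in the regime $\beta > \bar\beta_{FT}$ this correction is dominated by the leading exponential factor in the exit-rate comparison, so that \eqref{convergence_exp} is sufficient for the mixing argument to close. Once the truncated convergence is in hand, passing to $M \to \infty$ is standard: the finite-state K-processes converge to the full K-process on $\bar\N$, and the tail bound $\mathbb P(X^N(\bar c_N t) > M) \to \sum_{j>M}\gamma_j/\sum_{j\geq 1}\gamma_j$, which vanishes as $M \to \infty$, provides the required tightness.
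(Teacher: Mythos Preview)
Your overall architecture—truncate to $\mathcal{M}=\{1,\dots,M\}$, show exponential holding times in each slice, control the time spent outside $\mathcal{M}$, then let $M\to\infty$—matches the paper's, but your second technical claim contains a genuine error that would produce the wrong limit. You assert that after exiting $\mathcal{S}_i$ the next low-lying slice hit is $\mathcal{S}_j$ with probability approximately $\gamma_j/\sum_k\gamma_k$, because ``the hitting probabilities weight each $\mathcal{S}_j$ by its effective `size' $\gamma_j^N$.'' Both the conclusion and the justification are incorrect. Every slice $\mathcal{S}_j=\{\sigma_1(j)\}\times\mathcal{V}_{N_2}$ has the same cardinality $2^{N_2}$; which slice is entered next is governed purely by the first-level simple random walk $J^{1,N}$ on $\mathcal{V}_{N_1}$, for which $\sigma_1(1),\dots,\sigma_1(M)$ are just $M$ marked vertices, and Corollary~1.5 of \cite{gayrard2008} gives that these are hit in approximately \emph{uniform} order, independently of the energies. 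This uniformity is exactly what characterises the K-process: as recalled in Subsection~1.4, its holding times are exponential with means $\gamma_x$ while its transitions out of $x$ are \emph{uniform}. A process with $\gamma_j$-weighted transitions together with $\gamma_j$-holding-times would be a different object (a reversible chain with invariant measure proportional to $\gamma_j^2$), not the K-process in the statement.

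Accordingly, the paper does not compute any hitting probabilities. It invokes the approximate uniformity of the first-level transitions directly from \cite{gayrard2008}, and then proves the two remaining ingredients: (i) the rescaled sojourn $\bar c_N^{-1}\Psi_i^N$ in slice $i$ converges to an exponential of mean $\gamma_i$, via the decomposition $\Psi_i^N=\Upsilon_i^N+\Gamma_i^N$ into time at the bottom configuration $\sigma(i)$ and time elsewhere in the slice, showing $\bar c_N^{-1}\Gamma_i^N\to 0$ by a first-moment bound and handling $\Upsilon_i^N$ through an ergodic estimate for $J^{2,N}$ (this is where $\beta>\bar\beta_{FT}$, hence $G^N\gg 2^{N_2}$, is used); and (ii) the expected rescaled time spent outside $\mathcal{M}$, both before the first visit and between successive visits to a fixed $i\in\mathcal{M}$, is bounded above by a quantity converging to $2\sum_{j>M}\gamma_j$, which vanishes as $M\to\infty$. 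Note also that your proposed tightness device, the one-time tail $\mathbb{P}(X^N(\bar c_N t)>M)$, is not what is used; the paper controls the total \emph{time} spent outside $\mathcal{M}$ up to the relevant horizon, which is the estimate one actually needs for convergence in $J_1$.
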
	
		
		The convergence of Theorems~\ref{Thm_at_FT}  and \ref{Thm_below_FT} is on Skorohod space with the $J_1$ topology.
		{
			\begin{remark}
				Our choice of representation for the state of the dynamics at each time, involving the first level part of the spin configuration only, as prescribed in~\eqref{def1}, allows for the convergence in the $J_1$-Skorohod space. 
				A reasonable alternative choice would be to include the second level part of the spin configuration, but then we 
				would lose the $J_1$-convergence in Theorems~\ref{Thm_at_FT} and~\ref{Thm_below_FT} (since in both cases we have 
				increasingly many jumps in and out of the second level part of low-lying configurations, while the second level part
				rests). 
				Since in the non cascading regime we have, for each low-lying configuration, a single second level part for
				each first level part, the choice of representation between both possibilities considered is more of a technical nature,
				pertaining rather to the mode of convergence than to the limiting dynamics.
			\end{remark}

			%
			
			\begin{remark}
				As pointed out at the beginning of this subsection, Theorems~\ref{Thm_above_FT},~\ref{Thm_at_FT} and~\ref{Thm_below_FT} are stated for a special version of the environment, given by Skorohod representation, which converges almost surely as $N\to\infty$. Convergence results for the original environment follow immediately for the {\em integrated dynamics}, that is, the distribution of the dynamics, before and after the limit, averaged over the respective environment.
			\end{remark}
			
			\begin{remark}
				A comparison to the corresponding results of~\cite{fg2018} is in order. Theorem~\ref{Thm_below_FT} is essentially the same as
				Theorem 2.7 of~\cite{fg2018} if one considers only the first level motion in the latter theorem, which is in a sense the only relevant one (since, as will be argued in detail below, the dynamics spends virtually all the time visiting the low-lying configurations, and, as pointed out above, the second level part of the each low-lying configuration is a function of the
				first level part of that configuration). 
				For Theorems~\ref{Thm_above_FT} and~\ref{Thm_at_FT}, the different structure of the minima of the energies with respect to the cascading case of~\cite{fg2018}, revealed by Theorem~\ref{Thm_1}, is felt, both in the (family of) scales in the ergodic time regime, as well as in the fact that in those time scales we only see part of the energy landscape (and not the full one, as 
				in the corresponding results of~\cite{fg2018}). Other than that (but this is of course a major point), Theorems~\ref{Thm_above_FT} and~\ref{Thm_at_FT} describe similar behavior of the non cascading dynamics, as Theorems~2.4 and~2.5 of~\cite{fg2018} do for the cascading case, if one looks only at the first level motion of the latter dynamics; we notice however that, while for the cascading case, in the time scale of Theorem~2.4 of~\cite{fg2018}, the first level is in full equilibrium, this is not the case described by the present Theorem~\ref{Thm_above_FT}, which, as pointed out above, could be better characterized as partial equilibrium.
			\end{remark}
			
			
			
		}
		
		\subsection{K-processes}		
		We give a brief description of a process entering two of our main results. By a K-process in this paper we loosely mean a Markov process on an infinite  subset $\mathcal N$ of $\N$ whose single visits to a given $x\in\mathcal N$ lasts an exponential time of mean $\gamma_x$,  where $\gamma_x>0$, $x\in\mathcal N$, are  parameters of the process, satisfying moreover that $\sum_{x\in{\mathcal N}}\gamma_x<\infty$. The transition from $x$ is {\em uniform} in a certain sense, which can be made precise given the latter summability of the parameters.
		See~\cite{fm}, Section 3 (where ${\mathcal N}=\N$, and there is an extra parameter  $c$, which for our purposes here should be taken as $0$), for a detailed definition and properties of such a (version of this) process. In order to have regular (i.e., c\`adl\`ag) trajectories, we compactify $\mathcal N$, and the resulting process lives in ${\mathcal N}\cup\{\infty\}$. The set of times the process spends at $\infty$ is Cantor-like/perfect and with vanishing length. An interpretation of the sites of $\mathcal N$ in contrast to $\infty$ in the context of scaling limits of low-temperature spin dynamics is that $x\in\mathcal N$ represents singly a general low-lying (selected) configuration, while $\infty$ represents the remaining (higher energy) configurations lumped together.

		Variants of this process come up in similar situations involving a range of  dynamics which exhibit trapping, as pointed out above, under names such as {\em weighted K-processes} (as in~\cite{fp1}; see also~\cite{bfggm, jlt1, jlt2}); (multi-dimensional) {\em K-processes on a tree} (as in \cite{fgg, fp2}), or {\em spatial K-processes} (as in~\cite{cgl}). The only version appearing in the present paper, however, is the one discussed in the previous paragraph.

		\subsection{Organization of the paper.} 
		
		We devote the next section for the proof of Theorem~\ref{Thm_1}, and the following three sections to 
		Theorems~\ref{Thm_above_FT},~\ref{Thm_at_FT}, and~\ref{Thm_below_FT}, respectively.

		\section{Proof of Theorem \ref{Thm_1}}
		\setcounter{equation}{0}

		We will prove first the case $a<p$. The case $a=p$ will follow the lines of the previous case, as we will see later.

		Let us consider, for $\delta>0$ and $j\in\mathbb{Z}$, the intervals
		\begin{equation}\label{def3}
			I_N^j:=\left[\sqrt{aN}\beta_* + \frac{j}{N^{\frac{1}{2}+\delta}},\sqrt{aN}\beta_* + \frac{j+1}{N^{\frac{1}{2}+\delta}}\right].
		\end{equation}	
		For each $j$, let us call $ \mathcal{C}(I_N^j) := \#\{\sigma_1\in\mathcal{V}_{N_1}: \Xi_{\sigma_1}^{(1)}\in I_N^j\}$ the cardinality of the set of index $\sigma_1$  in $\mathcal{V}_{N_1}$ such that $\Xi^{(1)}_{\sigma_1}$ belongs to the interval $I_N^j$. Let us consider
		\begin{equation}\label{defmax}
			M_N^j:=\max_{\sigma\in\mathcal{V}_{N}:\Xi_{\sigma_1}^{(1)}\in I_N^j}\Xi_\sigma.
		\end{equation}
		Note that $\displaystyle \max_{\sigma\in\mathcal{V}_N}\Xi_{\sigma}=\max_{j\in\mathbb{Z}} M_N^j$. 
		Let us consider
		\begin{equation}\label{def6}
			Y_N^j:=\sqrt{a}\left(\sqrt{aN}\beta_*+\frac{j}{N^{\frac{1}{2}+\delta}}\right)+\sqrt{1-a}\left(\max_{\sigma\in\mathcal{V}_{N}:\Xi_{\sigma_1}^{(1)}\in I_N^j}\Xi_{\sigma_1\sigma_2}^{(2)}\right).
		\end{equation}
		Later,  in Lemma \ref{MN_approx_YN}, we will prove that $M_N^j$ and $Y_N^j$ are{
			{\em close}}. Still, in $Y_N^j$ we are taking the maximum over a set of random size. In order to overcome this difficulty, we will replace $\mathcal{C}(I_N^j)$ by $\mathbb{E}\left[\mathcal{C}(I_N^j)\right]$ and, in Lemma \ref{second moment}, we establish a result that enables  this replacement.

		For each $j$, let us define the variables 
		\begin{equation}\label{def4}
			W_N^j:=\max_{k=1,\dots,2^{N_2}\mathbb{E}[\mathcal{C}(I_N^j)]}\widetilde{\Xi}_{j,k},
		\end{equation}
		where $\{\widetilde{\Xi}^N_{j,k}: j\geq 1, k\geq 1\}$ is a family of i.i.d. standard Gaussian variables, and
		\begin{equation}\label{def5}
			Z_N^j:= \sqrt{a}\left(\sqrt{aN}\beta_*+\frac{j}{N ^{\frac{1}{2}+\delta}}\right)+\sqrt{1-a}W_N^j.
		\end{equation}
		In order to prove Theorem \ref{Thm_1}, we will first prove some convergence results for the variables $Z_N^j$,
		as an approximation to $Y_N^j$. Later we prove that this approximation is {\em good} and 
		Theorem \ref{Thm_1} follows.
		
		\begin{lemma}\label{lemma2} 
			Let $B$ be a nonempty open subset of $\mathbb{R}$ and $\epsilon\in(0,\frac{1}{2})$; then, we have that
			\begin{align*}
				&\lim_{N\to\infty}\mathbb{P}\left[\max_{j:\frac{j}{N^{\frac{1}{2}+\delta}}\in B\cap[-N^{\epsilon},N^{\epsilon}]}u_N^{-1}(Z_N^j)\leq x\right]=e^{-\Phi_Be^{-x}},
			\end{align*}
			where $\displaystyle \Phi_B=\int_B\frac{1}{\sqrt{2\pi(1-a)}}e^{-\frac{x^2}{2(1-a)}}dx$.
		\end{lemma}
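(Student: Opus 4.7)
The plan is to exploit the independence of the $W_N^j$ across $j$ (they are maxima of disjoint blocks of the i.i.d.\ family $\{\widetilde\Xi_{j,k}\}$) and then to match asymptotics through Gaussian tail estimates, recognizing a Riemann sum in the exponent.

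By independence,
\begin{equation*}
\P\Bigl[\max_{j:\,t_j\in B\cap[-N^\eps,N^\eps]} u_N^{-1}(Z_N^j)\leq x\Bigr]
=\prod_{j}\bigl(1-p_j\bigr)^{m_j},
\end{equation*}
where $t_j:=j/N^{1/2+\delta}$, $m_j := 2^{N_2}\,\E[\mc{C}(I_N^j)]$, and $p_j:=\P[\Xi>y_j(x)]$ for a standard Gaussian $\Xi$, with $y_j(x):=(u_N(x)-\sqrt{a}\,\alpha_j)/\sqrt{1-a}$ and $\alpha_j:=\sqrt{aN}\beta_*+t_j$. Using that $\E[\mc{C}(I_N^j)]\sim 2^{N_1}e^{-\alpha_j^2/2}/(N^{1/2+\delta}\sqrt{2\pi})$ (the Gaussian density being nearly constant on $I_N^j$, since $\alpha_jN^{-1/2-\delta}\to 0$) together with the Mills ratio $\P[\Xi>y]\sim e^{-y^2/2}/(y\sqrt{2\pi})$ (applicable since $y_j\sim\sqrt{(1-a)N}\,\beta_*$), I would expand $\alpha_j^2$ and $y_j^2$ and check that the divergent contributions to $\log m_j-y_j^2/2$ of sizes $N$ and $\sqrt N\,t_j$ cancel exactly, that the linear-in-$x$ term in $u_N$ produces the $e^{-x}$ factor, and that completing the square $t_j^2/2+a\,t_j^2/(2(1-a))=t_j^2/(2(1-a))$ yields the Gaussian density with variance $1-a$. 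The conclusion of this step is
\begin{equation*}
m_j\,p_j = \frac{e^{-x}\,e^{-t_j^2/(2(1-a))}}{\sqrt{2\pi(1-a)}\,N^{1/2+\delta}}\,(1+o(1)),
\end{equation*}
uniformly in $j$ in the considered range.

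Writing $\log(1-p_j)^{m_j}=-m_jp_j+O(m_jp_j^2)$, the quadratic remainder is negligible since $\max_jp_j$ is exponentially small in $N$ while $\sum_jm_jp_j$ stays bounded, so $\sum_jm_jp_j^2\leq(\max_jp_j)\sum_jm_jp_j\to 0$. The main sum is a Riemann sum of step $N^{-1/2-\delta}$ converging to $\int_{B\cap[-N^\eps,N^\eps]}\phi(t)\,dt$ with $\phi(t):=e^{-t^2/(2(1-a))}/\sqrt{2\pi(1-a)}$, and this integral converges to $\Phi_B$ as $N\to\infty$ by super-exponential decay of $\phi$ outside any bounded interval. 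Exponentiation gives $\exp(-\Phi_B e^{-x})$.

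The delicate step is the uniform bookkeeping in the expansion of $y_j^2$: terms of size up to $N$, as well as terms of size $\sqrt N\,|t_j|\leq N^{1/2+\eps}$, must cancel exactly, leaving only the $t_j^2$-term and contributions of order $1$. The residual cross errors such as $t_j\,(\log N)/\sqrt N$ are of order $N^{\eps-1/2}\log N$, which is $o(1)$ precisely because $\eps<1/2$; this is the very reason for the truncation at $|t_j|\leq N^\eps$ in the lemma.
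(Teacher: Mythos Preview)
Your proposal is correct and follows essentially the same approach as the paper's proof: factor by independence, take logarithms, control the quadratic remainder in $\log(1-p_j)$ via $\max_j p_j\to 0$, use the Mills-ratio Gaussian tail estimate together with the density approximation for $\E[\mc C(I_N^j)]$, expand and cancel the divergent terms in the exponent, and identify the remaining sum as a Riemann sum for $\Phi_B$. Your explicit remark that the residual cross terms of size $N^{\eps-1/2}\log N$ are $o(1)$ precisely because $\eps<\tfrac12$ matches the paper's use of this hypothesis.
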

		\begin{remark}\label{const}
			The constant $\Phi_B$ in Lemma \ref{lemma2} hints at the appearance of the Gaussian random variables in the statement of Theorem \ref{Thm_1}. 
		\end{remark}	
		\begin{remark}\label{ext}
			It readily follows from Lemma~\ref{lemma2} that its statement extends to other $B$'s, like closed subsets of $\mathbb{R}$.
		\end{remark}
		\begin{proof}
			For $x$ fixed, by independence, we have 
			\begin{align*}
				\mathbb{P}\left[\max_{j:\frac{j}{N^{\frac{1}{2}+\delta}}\in B\cap [-N^\epsilon,N^\epsilon]}Z_N^j\leq u_N(x)\right]&=\prod_{j:\frac{j}{N^{\frac{1}{2}+\delta}}\in B\cap [-N^\epsilon,N^\epsilon]}\mathbb{P}\left[W_N^j\leq y_N^j\right]\\
				&=\prod_{j:\frac{j}{N^{\frac{1}{2}+\delta}}\in B\cap [-N^\epsilon,N^\epsilon]}\mathbb{P}\left[X\leq y_N^j\right]^{2^{N_2}\mathbb{E}[\mathcal{C}(I_N^j)]}.
			\end{align*}
			where $\displaystyle y_N^j=y_N^j(x)=\frac{\frac{x}{\beta_*}+(1-a)\beta_*N-\frac{\log N+\kappa}{2\beta_*}-\frac{\sqrt{a}j}{N^\delta}}{\sqrt{(1-a)N}}$ and $X$ is a standard Gaussian random variable.
			
			\begin{remark}\label{yn}
				We note that $\min_{j:\frac{j}{N^{\frac{1}{2}+\delta}}\in B\cap[-N^{\epsilon},N^{\epsilon}]}y_N^j\to\infty$ as $N\to\infty$.
			\end{remark}
			
			Applying logarithms we can see that it suffices to show that
			\begin{equation}\label{eq1}
				\lim_{N\to\infty}\sum_{j:\frac{j}{N^{\frac{1}{2}+\delta}}\in B\cap[-N^{\epsilon},N^{\epsilon}]}2^{N_2}\mathbb{E}[\mathcal{C}(I_N^j)]\log\left(1-\mathbb{P}\left[X> y_N^j\right]\right)=-\Phi_Be^{-x}.
			\end{equation}
			By Taylor's Theorem, we have that
			\begin{equation}\label{taylor}
				\log\left(1-\mathbb{P}\left[X>y_N^j\right]\right)=
				-\mathbb{P}\left[X>y_N^j\right]-\frac{\mathbb{P}\left[X>y_N^j\right]^2}{2(1-\theta_N^j)}, 
				\text{ where } 0<\theta_N^j<\mathbb{P}\left[X>y_N^j\right]<1.
			\end{equation}
			Then, since, by Remark~\ref{yn}, $\max_{j:\frac{j}{N^{\frac{1}{2}+\delta}}\in B\cap[-N^{\epsilon},N^{\epsilon}]}\mathbb{P}\left[X> y_N^j\right]$ vanishes as
			$N\to\infty$, it suffices to prove  
			\begin{equation}\label{eq9}
				\lim_{N\to\infty}\sum_{j:\frac{j}{N^{\frac{1}{2}+\delta}}\in B\cap[-N^{\epsilon},N^{\epsilon}]}2^{N_2}\mathbb{E}[\mathcal{C}(I_N^j)]\mathbb{P}\left[X> y_N^j\right]=\Phi_Be^{-x}.
			\end{equation}
			
			A simple computation shows that 
			\begin{align}
				2^{N_2}\mathbb{E}[\mathcal{C}(I_N^j)]&=2^N\mathbb{P}\left[X\in I_N^j\right]=\frac{2^N}{\sqrt{2\pi}}\int_{I_N^j}e^{-\frac{x^2}{2}}dx=\frac{2^N\eta_N^j}{\sqrt{2\pi}N^{\frac{1}{2}+\delta}}\exp\left\{-\frac{\left(\sqrt{aN}\beta_*+\frac{j}{N^{\frac{1}{2}+\delta}}\right)^2}{2}\right\},
			\end{align}
			where $\eta_N^j\to 1$ as $N\to\infty$ for all $|j|\leq N^{\frac{1}{2}+\delta+\epsilon}$ and $\epsilon<\frac{1}{2}$. Then, in order to prove \eqref{eq9}, it suffices to prove
			\begin{equation}\label{eq12}
				\lim_{N\to\infty}\sum_{j:\frac{j}{N^{\frac{1}{2}+\delta}}\in B\cap[-N^{\epsilon},N^{\epsilon}]}\frac{2^N}{\sqrt{2\pi}N^{\frac{1}{2}+\delta}}\exp\left\{-\frac{\left(\sqrt{aN}\beta_*+\frac{j}{N^{\frac{1}{2}+\delta}}\right)^2}{2}\right\}\mathbb{P}\left[X> y_N^j\right]=\Phi_Be^{-x}.
			\end{equation}

			We recall the following standard tail estimates on the tail of the standard (unnormalized) Gaussian distribution 
			\begin{equation}\label{bds}
				\frac x{1+x^2} e^{-\frac{x^2}{2}}\leq\int_x^\infty e^{-\frac{t^2}{2}}dt\leq\frac1xe^{-\frac{x^2}{2}},\,x>0.
			\end{equation}
			Then, again by Remark~\ref{yn}, and using~(\ref{bds}), we have that in order to prove \eqref{eq12}, it suffices to show that
			\begin{equation}\label{eq13}
				\lim_{N\to\infty}\sum_{j:\frac{j}{N^{\frac{1}{2}+\delta}}\in B\cap[-N^{\epsilon},N^{\epsilon}]}\frac{2^N}{\sqrt{2\pi}N^{\frac{1}{2}+\delta}}\exp\left\{-\frac{\left(\sqrt{aN}\beta_*+\frac{j}{N^{\frac{1}{2}+\delta}}\right)^2}{2}\right\}\frac{e^{-\frac{\left(y_N^j\right)^2}{2}}}{\sqrt{2\pi}y_N^j}=\Phi_Be^{-x}.
			\end{equation}
			A simple computation shows that the limit in \eqref{eq13} is equal to
			\begin{equation}\label{eq14}
				\lim_{N\to\infty}\frac{2^{(1-a)N}}{\sqrt{2\pi} N^{\frac{1}{2}+\delta}}\sum_{j:\frac{j}{N^{\frac{1}{2}+\delta}}\in B\cap[-N^{\epsilon},N^{\epsilon}]}\frac{1}{\sqrt{2\pi}y_N^j}\exp\left\{-\frac{\sqrt{a}\beta_*j}{N^\delta}-\frac{j^2}{2N^{1+2\delta}}-\frac{\left(y_N^j\right)^2}{2}\right\},
			\end{equation}
			whenever the latter limit exists. Since $\epsilon<\frac{1}{2}$, we have that
			$$\lim_{N\to\infty}\max_{|j|<N^{\frac{1}{2}+\delta+\epsilon}}\frac{y_N^j}{\beta_*\sqrt{(1-a)N}}=1,$$
			and
			\begin{equation} 
				\lim_{N\to\infty}\max_{|j|<N^{\frac{1}{2}+\delta+\epsilon}} \frac{2^{(1-a)N}e^{-\frac{\left(y_N^j\right)^2}{2}}}{\beta_*\sqrt{2\pi}\sqrt{N}\exp\left\{\frac{\sqrt{a}\beta_* j}{N^\delta}-\frac{aj^2}{2(1-a)N^{1+2\delta}}\right\}}=e^{-x}.
			\end{equation}
			Then, in order to conclude the proof of \eqref{eq9}, we only need to show that
			\begin{equation}\label{eq2}
				\lim_{N\to\infty}\sum_{j:\frac{j}{N^{\frac{1}{2}+\delta}}\in B\cap[-N^{\epsilon},N^{\epsilon}]}\frac{1}{\sqrt{2\pi(1-a)}N^{\frac{1}{2}+\delta}}\exp\left\{-\frac{j^2}{2(1-a)N^{1+2\delta}}\right\}=\Phi_B,
			\end{equation}
			which holds because the left hand side in the limit above is a Riemann sum for the integral that defines $\Phi_B$.

		\end{proof}

		\begin{corollary}\label{corollary_lemma2} 
			Let $B$ be a Borel set of $\mathbb{R}$ and $\epsilon\in(0,\frac{1}{2})$, then we have
			\begin{align*}
				\lim_{N\rightarrow\infty}\sum_{j:\frac{j}{N^{\frac{1}{2}+\delta}}\in B\cap[-N^{\epsilon},N^{\epsilon}]}\mathbb{P}\left[u_N^{-1}(Z_N^j)>x\right] =\Phi_Be^{-x}.
			\end{align*}
		\end{corollary}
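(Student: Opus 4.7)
The plan is to derive the corollary directly from the intermediate computations in the proof of Lemma~\ref{lemma2}, essentially replacing the product-over-$j$ estimate (which led to the Gumbel limit) by a sum-over-$j$ estimate. Writing $m_N^j:=2^{N_2}\mathbb{E}[\mathcal{C}(I_N^j)]$ and $p_N^j:=\mathbb{P}[X>y_N^j]$ with $y_N^j$ as defined in that proof, independence gives
\begin{equation*}
\mathbb{P}\!\left[u_N^{-1}(Z_N^j)>x\right]=1-(1-p_N^j)^{m_N^j}.
\end{equation*}

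First I would establish the uniform smallness of the relevant quantities. By Remark~\ref{yn}, $y_N^j\to\infty$ uniformly for $j/N^{1/2+\delta}\in B\cap[-N^\epsilon,N^\epsilon]$, so $p_N^j\to 0$ uniformly. Combining the explicit expression for $m_N^j$ computed in the proof of Lemma~\ref{lemma2} with the Mills-ratio bounds~\eqref{bds} (as done there to pass from \eqref{eq12} to \eqref{eq13}), one shows that uniformly in $j$ in the relevant range,
\begin{equation*}
m_N^j\, p_N^j\;=\;\frac{e^{-x}}{\sqrt{2\pi(1-a)}\,N^{1/2+\delta}}\exp\!\left\{-\frac{j^2}{2(1-a)N^{1+2\delta}}\right\}(1+o(1)),
\end{equation*}
which in particular is $O(N^{-1/2-\delta})$ uniformly in $j$, hence itself $o(1)$.

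Next, using $1-(1-p)^m=mp\,(1+O(mp)+O(p))$ for $m p$ and $p$ small, these two uniform estimates give
\begin{equation*}
\mathbb{P}\!\left[u_N^{-1}(Z_N^j)>x\right]=m_N^j\,p_N^j\,(1+o(1))
\end{equation*}
with the $o(1)$ uniform in $j$. Summing over $j$ in the prescribed range, and invoking \eqref{eq9} (which is exactly the statement that $\sum_j m_N^j p_N^j\to\Phi_B e^{-x}$, i.e. the object appearing in~\eqref{eq9} of the proof of Lemma~\ref{lemma2}), we obtain
\begin{equation*}
\sum_{j:\,j/N^{1/2+\delta}\in B\cap[-N^\epsilon,N^\epsilon]}\mathbb{P}\!\left[u_N^{-1}(Z_N^j)>x\right]\;\longrightarrow\;\Phi_B\,e^{-x}.
\end{equation*}

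The only step that needs care beyond reusing the internals of Lemma~\ref{lemma2} is justifying the Riemann-sum convergence~\eqref{eq2} for a general Borel $B$ rather than an open one; I would handle this by sandwiching $B$ between open and closed sets that are finite unions of intervals, applying the Riemann sum convergence there (the integrand is the smooth Gaussian density), and using the uniform $O(N^{-1/2-\delta})$ bound on the individual summands to control the error. This is the main (mild) technical point; the rest is a straightforward linearization of the tail probability built on estimates already present in the proof of Lemma~\ref{lemma2}.
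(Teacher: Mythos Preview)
Your proposal is correct and follows essentially the same route as the paper: both arguments reduce to showing that each $\mathbb{P}[u_N^{-1}(Z_N^j)>x]$ is uniformly asymptotic to $m_N^j p_N^j$ and then invoke the internal estimate~\eqref{eq9} from the proof of Lemma~\ref{lemma2}. The only organizational difference is that the paper first takes logarithms in the conclusion of Lemma~\ref{lemma2} and then Taylor-expands $\log(1-\mathbb{P}[Z_N^j>u_N(x)])$, whereas you linearize $1-(1-p_N^j)^{m_N^j}$ directly; both need and use the same uniform $O(N^{-1/2-\delta})$ bound on $m_N^j p_N^j$, and your explicit treatment of the extension to Borel $B$ is a point the paper leaves implicit.
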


		\begin{proof}
			Applying logarithms in Lemma \ref{lemma2}, we obtain
			
			\begin{align*}
				\lim_{N\to\infty} \sum_{j:\frac{j}{N^{\frac{1}{2}+\delta}}\in B\cap [-N^\epsilon,N^\epsilon]}\log\left(1-\mathbb{P}\left[Z_N^j> u_N(x)\right]\right)=-\Phi_B e^{-x}.
			\end{align*}
			Similarly as in~(\ref{taylor}), we have that 
			$$\log\left(1-\mathbb{P}\left[Z_N^j> u_N(x)\right]\right)=-\mathbb{P}\left[Z_N^j> u_N(x)\right]-\frac{\mathbb{P}\left[Z_N^j> u_N(x)\right]^2}{2(1-\theta_N^j)^2},$$
			where $0<\theta_N^j<\mathbb{P}\left[Z_N^j> u_N(x)\right]$.
			Hence, it suffices to prove 
			$$\lim_{N\to\infty}\max_{j:|j|\leq N^{\frac{1}{2}+\delta+\epsilon}}\mathbb{P}\left[Z_N^j> u_N(x)\right]=0,$$
			or, equivalently 
			$$\lim_{N\to\infty}\mathbb{P}\left[Z_N^j\leq u_N(x)\right]=1 \text{ uniformly in } \left\{j:|j|\leq N^{\frac{1}{2}+\delta+\epsilon}\right\}.$$
			Recall, from the proof of Lemma \ref{lemma2}, that
			$$\mathbb{P}\left[Z_N^j\leq u_N(x)\right]=\mathbb{P}\left[X\leq y_N^j\right]^{2^{N_2}\mathbb{E}[\mathcal{C}(I_N^j)]}.$$
			Hence, it is enough to prove that 
			$$\lim_{N\to\infty}2^{N_2}\mathbb{E}[\mathcal{C}(I_N^j)]\log\left(1-\mathbb{P}\left[X>y_N^j\right]\right)=0 \text{ uniformly in } \left\{j:|j|\leq N^{\frac{1}{2}+\delta+\epsilon}\right\}.$$
			Note that the expression above is the same one appearing in the sum in \eqref{eq1}. In \eqref{eq2} we proved that the above limit equals 
			$$\lim_{N\to\infty}\frac{e^{-x}}{\sqrt{2\pi(1-a)}N^{\frac{1}{2}+\delta}}\exp\left\{-\frac{j^2}{2(1-a)N^{1+2\delta}}\right\},$$
			which can be readily checked to be zero uniformly in $\left\{j:|j|\leq N^{\frac{1}{2}+\delta+\epsilon}\right\}$.
		\end{proof}

		Recall from \eqref{def5} the definition of $Z_N^j$. For $B$ as in Lemma~\ref{lemma2}, let us relabel the indices $j$ belonging to the set  $\left\{j:\frac{j}{N^{\frac{1}{2}+\delta}}\in B\cap[-N^\epsilon,N^\epsilon]\right\}$ as $j_B(1),j_B(2),\cdots$ in such a way that $Z_N^{j_B(1)}>Z_N^{j_B(2)}>\cdots.$ We will omit $B$ in the notation $Z_N^{j_B(i)}$ when $B=\mathbb{R}$. The next result describes the scaling limit distribution of the vector $\left(Z_N^{j_B(1)},Z_N^{j_B(2)}\right)$.
		
		\begin{lemma}\label{lema_conjunta}
			Let $B$ be as in Lemma~\ref{lemma2} and $\epsilon\in(0,\frac{1}{2})$. Then, for $x\geq y$, we have
			$$\lim_{N\to\infty}\mathbb{P}\left[u_N^{-1}(Z_N^{j_B(1)})\leq x, u_N^{-1}(Z_N^{j_B(2)})\leq y\right]=\left[1+\Phi_B(e^{-y}-e^{-x})\right]e^{-\Phi_Be^{-y}}.$$
		\end{lemma}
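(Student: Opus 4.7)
The strategy is to exploit the fact that the $Z_N^j$'s are independent across $j$ (since each $W_N^j$ involves a disjoint block of the family $\{\widetilde{\Xi}_{j,k}\}$), and then to reduce the joint statement to Lemma~\ref{lemma2} together with Corollary~\ref{corollary_lemma2}. Let me abbreviate $J_N:=\{j:\tfrac{j}{N^{1/2+\delta}}\in B\cap[-N^\epsilon,N^\epsilon]\}$ and $p_N^j(z):=\mathbb{P}[Z_N^j\leq u_N(z)]$.

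The first step is the elementary identity, valid for $x\geq y$,
\begin{align*}
\mathbb{P}\bigl[u_N^{-1}(Z_N^{j_B(1)})\leq x,\,u_N^{-1}(Z_N^{j_B(2)})\leq y\bigr]
&=\prod_{j\in J_N}p_N^j(y)+\sum_{j\in J_N}\bigl(p_N^j(x)-p_N^j(y)\bigr)\prod_{k\in J_N\setminus\{j\}}p_N^k(y),
\end{align*}
which follows by partitioning on whether all $Z_N^j$'s are $\leq u_N(y)$ or exactly one lies in $(u_N(y),u_N(x)]$; independence gives the factorization.

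By Lemma~\ref{lemma2} (and Remark~\ref{ext}) the first term converges to $e^{-\Phi_B e^{-y}}$. For the second term, the proof of Corollary~\ref{corollary_lemma2} shows $\max_{j\in J_N}\mathbb{P}[Z_N^j>u_N(y)]\to 0$, hence $\min_{j\in J_N}p_N^j(y)\to 1$; consequently
\begin{equation*}
\prod_{k\in J_N\setminus\{j\}}p_N^k(y)=\frac{\prod_{k\in J_N}p_N^k(y)}{p_N^j(y)}\longrightarrow e^{-\Phi_B e^{-y}}\quad\text{uniformly in }j\in J_N.
\end{equation*}
Factoring this uniform limit out of the sum and applying Corollary~\ref{corollary_lemma2} twice yields
\begin{equation*}
\sum_{j\in J_N}\bigl(p_N^j(x)-p_N^j(y)\bigr)\prod_{k\in J_N\setminus\{j\}}p_N^k(y)\longrightarrow\Phi_B\bigl(e^{-y}-e^{-x}\bigr)\,e^{-\Phi_B e^{-y}}.
\end{equation*}
Adding the two contributions gives the claimed limit.

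The only non-routine point is the uniform convergence of the truncated product, but this is already essentially contained in the estimates used to prove Corollary~\ref{corollary_lemma2}. There is no serious obstacle here: the whole lemma is the standard upgrade from a Poisson convergence statement for the maximum to one for the top two order statistics of independent summands, and the ingredients for carrying it out have been fully established in Lemma~\ref{lemma2} and Corollary~\ref{corollary_lemma2}.
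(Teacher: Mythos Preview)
Your proof is correct and follows essentially the same approach as the paper: both decompose the event $\{Z_N^{j_B(1)}\le u_N(x),\,Z_N^{j_B(2)}\le u_N(y)\}$ into ``all $Z_N^j\le u_N(y)$'' plus ``exactly one $Z_N^j$ lies in $(u_N(y),u_N(x)]$'', use independence to factor, invoke Lemma~\ref{lemma2} for the first term, argue that the leave-one-out product $\prod_{k\neq j}p_N^k(y)$ converges uniformly in $j$ to $e^{-\Phi_B e^{-y}}$, and then apply Corollary~\ref{corollary_lemma2} to handle the remaining sum. The only cosmetic difference is that the paper phrases the uniform step as $\mathbb{P}[\max_{l\neq j}Z_N^l\le u_N(y)]/\mathbb{P}[\max_l Z_N^l\le u_N(y)]\to 1$ uniformly, which is equivalent to what you wrote.
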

		\begin{proof}
			\begin{align}\label{eq3}
				\mathbb{P}&\left[u_N^{-1}(Z_N^{j_B(1)})\leq x, u_N^{-1}(Z_N^{j_B(2)})\leq y\right]\nonumber\\
				&=\mathbb{P}\left[Z_N^{j_B(1)}\leq u_N(x), Z_N^{j_B(2)}\leq u_N(y)\right]\nonumber\\
				&=\mathbb{P}\left[Z_N^{j_B(1)}\leq u_N(y)\right]+\mathbb{P}\left[u_N(y)<Z_N^{j_B(1)}\leq u_N(x),Z_N^{j_B(2)}\leq u_N(y)\right]\nonumber\\
				&=\mathbb{P}\left[Z_N^{j_B(1)}\leq u_N(y)\right]+\sum_{j:\frac{j}{N^{\frac{1}{2}+\delta}}\in B\cap[-N^\epsilon,N^\epsilon]}\mathbb{P}\left[u_N(y)<Z_N^j\leq u_N(x)\right]\mathbb{P}\left[\max_{l\neq j}Z_N^l\leq u_N(y)\right].	
			\end{align}
			It readily follows from the arguments used in the proof of Lemma \ref{lemma2} that we have that 
			$$\displaystyle \lim_{N\to\infty}\frac{\mathbb{P}\left[\max_{l\neq j}Z_N^l\leq u_N(y)\right]}{\mathbb{P}\left[\max_{l}Z_N^l\leq u_N(y)\right]}=1 \text{ uniformly in } \left\{j:\frac{j}{N^{\frac{1}{2}+\delta}}\in B\cap[-N^\epsilon,N^\epsilon]\right\}.$$ 
			Then the limit of the expression in \eqref{eq3} is equal to
			\begin{align*}
				\lim_{N\to\infty}\mathbb{P}\left[ Z_N^{j_B(1)}\leq u_N(y)\right]\left(1+\sum_{j:\frac{j}{N^{\frac{1}{2}+\delta}}\in B\cap[-N^\epsilon,N^\epsilon]}\left(\mathbb{P}\left[Z_N^j\leq u_N(x)\right]-\mathbb{P}\left[Z_N^j\leq u_N(y)\right]\right)\right).
			\end{align*}
			The proof is concluded using Lemma \ref{lemma2} and Corollary \ref{corollary_lemma2}. 
		\end{proof}
		
		We next establish a convergence result for the variables $Z_N^j$, which constitutes a version of Theorem \ref{Thm_1} when we replace $Y_N^j$ by $Z_N^j$. 
		\begin{proposition}\label{casi_teorema}
			Let $\mathcal{P}=\{\xi_i:i\geq 1\}$  be the Poisson point process in Definition \ref{PPP}, with $K=1$. For $k\geq1$, let $W_1,\cdots,W_k$, be independent standard Gaussian variables with variance $1-a$, which are also independent of $\mathcal{P}$. Then 
			$$\left( \frac{j(1)}{N^{\frac{1}{2}+\delta}},\dots,\frac{j(k)}{N^{\frac{1}{2}+\delta}};\,
			u_N^{-1}\left(Z_N^{j(1)}\right),\dots,u_N^{-1}\left(Z_N^{j(k)}\right)\right)$$
			converges in distribution to $\left(W_1,\ldots,W_k;\,\xi_1,\ldots,\xi_k\right)$ as $N$ goes to infinity.
		\end{proposition}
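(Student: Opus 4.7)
The plan is to lift the convergence statements already established for the variables $Z_N^j$ into a full point-process convergence on $\R\times\R$, and then read off the top-$k$ joint behavior. Concretely, I would consider
\begin{equation*}
\mu_N := \sum_j \delta_{\bigl(j/N^{1/2+\delta},\, u_N^{-1}(Z_N^j)\bigr)},
\end{equation*}
regarded as a simple point process on $\R\times\R$, and aim to show $\mu_N\to\Pi$ in distribution, vaguely on $\R\times(x_0,\infty)$ for every $x_0\in\R$, where $\Pi$ is a Poisson point process with intensity $\phi(w)\,dw\otimes e^{-x}\,dx$ and $\phi(w):=\frac{1}{\sqrt{2\pi(1-a)}}e^{-w^2/(2(1-a))}$. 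Ordering the atoms of $\Pi$ by their $x$-coordinate then gives a well-defined top-$k$ sequence with exactly the distribution claimed in the proposition.

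The first step is joint void-probability convergence on products of disjoint sets. For pairwise disjoint bounded Borel sets $B_1,\dots,B_m\subset\R$ and thresholds $x_1,\dots,x_m$, by the mutual independence of $\{Z_N^j\}_j$ (which holds because $Z_N^j$ is built from a disjoint block of the i.i.d.\ family $\{\widetilde{\Xi}_{j,k}\}$) together with Lemma \ref{lemma2} applied separately to each $B_l$ (using also Remark \ref{ext}),
\begin{equation*}
\P\!\left[\bigcap_{l=1}^m \Bigl\{\max_{j:\,j/N^{1/2+\delta}\in B_l\cap[-N^\epsilon,N^\epsilon]} u_N^{-1}(Z_N^j)\le x_l\Bigr\}\right] \longrightarrow \prod_{l=1}^m e^{-\Phi_{B_l} e^{-x_l}}.
\end{equation*}
Intensity convergence on the same sets follows from Corollary \ref{corollary_lemma2} and the analogous extension.

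The second step is to remove the artificial cutoff $[-N^\epsilon,N^\epsilon]$. Extending the estimate that produced \eqref{eq2} in the proof of Lemma \ref{lemma2}, for $j$ in the moderate-tail regime $|j|/N^{1/2+\delta}\in(N^\epsilon,N^{\epsilon_0}]$ with $\epsilon<\epsilon_0<1/2$, the probability $\P[u_N^{-1}(Z_N^j)>x]$ is still captured by a Riemann approximation of $\phi$, whose total contribution is bounded by $\int_{|w|>N^\epsilon}\phi(w)\,dw\cdot e^{-x}$ plus vanishing corrections, and this Gaussian tail tends to $0$. For the extreme tail $|j|>N^{1/2+\delta+\epsilon_0}$, the factor $2^{N_2}\E[\mathcal{C}(I_N^j)]$ carries the suppression $e^{-j^2/(2N^{1+2\delta})}$ coming from the Gaussian density of $\Xi^{(1)}_{\sigma_1}$, making the contribution negligible by a direct crude bound. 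Together these replace $[-N^\epsilon,N^\epsilon]$ by $\R$ in the displayed limit of the previous step.

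With joint void-probability and intensity convergence in hand, Kallenberg's theorem for simple point processes yields $\mu_N\to\Pi$ vaguely on $\R\times(x_0,\infty)$ for every $x_0$. Since $\int_{x_0}^\infty\int_\R \phi(w)e^{-x}\,dw\,dx=e^{-x_0}<\infty$, ordering the atoms of $\Pi$ by their $x$-coordinate gives a well-defined top-$k$ sequence $(W_1,\xi_1),\dots,(W_k,\xi_k)$ with $\xi_1>\dots>\xi_k$; by the product form of the intensity, $\{\xi_i\}$ is a Poisson point process on $\R$ with intensity $e^{-x}\,dx$ (matching $\mathcal{P}$ of Definition \ref{PPP} with $K=1$), and conditionally on them the marks $W_i$ are i.i.d.\ with density $\phi$, hence independent of the $\xi_i$'s and centered Gaussian with variance $1-a$. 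A tightness check on the $W$-coordinates of the top-$k$ atoms of $\mu_N$ (via the same Gaussian tail used in the second step) upgrades vague convergence to joint convergence of the top-$k$ points, which is exactly the claim. I expect the main obstacle to be this removal of the cutoff: matching the Riemann-sum regime, where the asymptotics of Lemma \ref{lemma2} still apply, with the extreme-tail regime, where one must use a direct crude bound from the Gaussian factor in $\E[\mathcal{C}(I_N^j)]$, uniformly in $x$ and in the disjoint-set decomposition, is the part requiring the most care.
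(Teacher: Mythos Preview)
Your approach is correct but genuinely different from the paper's. The paper does a direct finite-dimensional computation: it rewrites the event $\{j(i)/N^{1/2+\delta}\in B_i,\ u_N^{-1}(Z_N^{j(i)})\in(c_i,d_i]\ \text{for }i=1,\dots,k\}$ in terms of the first and second maxima $Z_N^{j_{B_i}(1)},Z_N^{j_{B_i}(2)}$ within each $B_i$ and the maximum over the complement $\widetilde B_k=(B_1\cup\dots\cup B_k)^c$, then integrates using Lemma~\ref{lemma2} and the two-point joint law of Lemma~\ref{lema_conjunta} to obtain the product form $\bigl(\prod_i\Phi_{B_i}\bigr)\P[\xi_i\in(c_i,d_i],\,i\le k]$. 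Your route via Kallenberg's theorem and the top-$k$ continuous mapping is cleaner in that it bypasses Lemma~\ref{lema_conjunta} entirely, at the price of invoking more abstract point-process machinery and having to argue carefully that vague convergence plus tightness in the $W$-coordinate yields convergence of the ordered top-$k$ atoms.

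One point to correct: your ``second step'' (removing the cutoff $[-N^\epsilon,N^\epsilon]$) is unnecessary here and in fact slightly problematic. In the paper, the indices $j(1),\dots,j(k)$ are \emph{defined} (in the paragraph preceding Lemma~\ref{lema_conjunta}) as the ordering within the truncated index set $\{j:|j|\le N^{1/2+\delta+\epsilon}\}$, so the proposition is already a statement about the restricted point process. The variables $Z_N^j$ are auxiliary and only introduced for $j$ in that range; for large $|j|$ the quantity $2^{N_2}\E[\mathcal{C}(I_N^j)]$ eventually drops below $1$ and the maximum in~\eqref{def4} is over an empty set. The passage from the truncated maxima to the full maxima $\Xi_{\sigma(i)}$ is handled later in the paper (Proposition~\ref{Thm_1_inside_the_box} and Lemma~\ref{equals_maximum}), by a different comparison argument using~\eqref{BK}. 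So you should simply run your Kallenberg argument on the restricted index set; what you identify as the ``main obstacle'' then disappears.
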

		
		\begin{proof}
			It is enough to show that, for $\infty>d_1>c_1\geq d_2>c_2\geq\dots\geq d_k>c_k>-\infty$,  and $B_1, \dots,B_k$ disjoint intervals of $\mathbb{R}$, we have		
			\begin{align}\nonumber
				&\lim_{N\to\infty} \mathbb{P}\left[ 
				\frac{j(1)}{N^{\frac{1}{2}+\delta}}\in B_1,\dots,
				\frac{j(k)}{N^{\frac{1}{2}+\delta}}\in B_k;\, 
				u_N^{-1}\left(Z_N^{j(1)}\right)\in (c_1,d_1],\dots
				u_N^{-1}\left(Z_N^{j(k)}\right)\in (c_k,d_k] 
				\right]\\
				&= \left(\prod_{i=1}^k\Phi_{B_i} \right)\mathbb{P}\left[\xi_1\in(c_1,d_1],\dots,\xi_k\in(c_k,d_k]\right]\label{finito_dim} 
			\end{align}
			(see Theorem $2.2$ from \cite{Bill2013}, which may be applied to 
			$\big\{\mathbb{R}^{k}\setminus\{ (x_1,\dots,x_k)\in\mathbb{R}^{k}: x_i=x_j \text{ for some } i\neq j\}\big\}\times\mathbb{S}_k$, which has full measure under the limit law, where $\mathbb{S}_k=\{x\in\mathbb{R}^{k}:\,x_1>x_2>\cdots>x_k\}$).

			For short, let  $\widetilde{B}_k=(B_1\cup\cdots\cup B_k)^c$.  Recalling the definition of $j_B(i)$ above Lemma \ref{lema_conjunta}, the probability on the left hand side of (\ref{finito_dim}) equals
			\begin{align}\label{eq4}
				\mathbb{P}\left[\bigcap_{i=1}^k \left\{u_N^{-1}\left(Z_N^{j_{B_i}(1)}\right)\in[c_i,d_i]\right\},\bigcap_{i=1}^{k-1}\left\{u_N^{-1}\left(Z_N^{j_{B_k}(1)}\right)>u_N^{-1}\left(Z_N^{j_{B_i}(2)}\right)\right\}\right.,\nonumber\\
				\left.\left\{u_N^{-1}\left(Z_N^{j_{B_k}(1)}\right)>u_N^{-1}\left(Z_N^{j_{\widetilde{B}_k}(1)}\right)\right\}\right].
			\end{align}
			Let $f_{N,i}$ be the density of the random variable $u_N^{-1}\left(Z_N^{j_{B_i}(1)}\right)$ and $h_{N,i}$ be the joint density of the random vector $\left(u_N^{-1}\left(Z_N^{j_{B_i}(1)}\right),u_N^{-1}\left(Z_N^{j_{B_i}(2)}\right)\right)$. The probability in \eqref{eq4} then equals
			\begin{align}\label{eq5}
				\int_{c_k}^{d_k}\mathbb{P}\left[u_N^{-1}\left(Z_N^{j_{\widetilde{B}_k}(1)}\right)<x\right]\left(\prod_{i=1}^{k-1}\int_{-\infty}^{x}\int_{c_i}^{d_i}h_{N,i}(s,t)dsdt\right)f_{N,k}(x)dx.
			\end{align}
			By Lemmas \ref{lemma2} and \ref{lema_conjunta}, we have, respectively, that 
			$$\lim_{N\to\infty}	\mathbb{P}\left[u_N^{-1}\left(Z_N^{j_{\widetilde{B}_k}(1)}\right)<x\right] = e^{-(1-\sum_{i=1}^{k}\Phi_{B_i})e^{-x}}
			$$
			and 
			
			\begin{align*}
				\lim_{N\to\infty} \prod_{i=1}^{k-1}\int_{-\infty}^{x}\int_{c_i}^{d_i}h_{N,i}(s,t)dsdt=\prod_{i=1}^{k-1}\left(\Phi_{B_i}e^{-\Phi_{B_i}e^{-x}}\int_{c_i}^{d_i}e^{-s}ds\right).
			\end{align*}
			It promptly follows that the limit in \eqref{eq5} equals
			$$\left(\prod_{i=1}^k\Phi_{B_i}\right)\left(\prod_{i=1}^{k-1}\int_{c_i}^{d_i}e^{-s}ds\right)\left(\int_{c_k}^{d_k}e^{-x}e^{-e^{-x}}dx\right),$$
			which is readily checked to equal the right hand side of (\ref{finito_dim}).

		\end{proof}
		
		The next lemma states a deviation result for the number of indices $\sigma_1$ such that $\Xi^{(1)}_{\sigma_1}\in I_N^j$. In Proposition \ref{casi_teorema} we stated the convergence of the variables $Z_N^j$ and, with the next result, we will be able to prove the convergence of the variables $Y_N^j$.

		\begin{lemma}\label{second moment}
			For all $0<\epsilon<\frac{1}{2}$ and  $A>0$ we have 
			$$
			\lim_{N\to\infty} \sum _{|j|\leq N^{\frac{1}{2}+\epsilon+\delta}}\mathbb{P}\left[ \left|\mathcal{C}(I_N^j)- \mathbb{E}\left[\mathcal{C}(I_N^j)\right]\right|\geq A \mathbb{E}\left[\mathcal{C}(I_N^j)\right]\right] =0.
			$$
		\end{lemma}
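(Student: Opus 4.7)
The plan is to recognize $\mathcal{C}(I_N^j)$ as a binomial random variable and apply Chebyshev's inequality, reducing matters to a uniform exponential lower bound on $\mathbb{E}[\mathcal{C}(I_N^j)]$. Since $\{\Xi^{(1)}_{\sigma_1}:\sigma_1\in\mathcal{V}_{N_1}\}$ consists of $2^{N_1}$ i.i.d.\ standard Gaussians, $\mathcal{C}(I_N^j)\sim\mathrm{Binomial}(2^{N_1},p_N^j)$ with $p_N^j:=\mathbb{P}[X\in I_N^j]$, so writing $\mu_N^j:=\mathbb{E}[\mathcal{C}(I_N^j)]=2^{N_1}p_N^j$ one has $\mathrm{Var}(\mathcal{C}(I_N^j))=2^{N_1}p_N^j(1-p_N^j)\leq\mu_N^j$, and Chebyshev yields
\[
\mathbb{P}\bigl[|\mathcal{C}(I_N^j)-\mu_N^j|\geq A\,\mu_N^j\bigr]\leq\frac{1}{A^2\,\mu_N^j}.
\]

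The rest of the argument is then to show a uniform lower bound $\mu_N^j\geq e^{cN}$ on the range $|j|\leq N^{1/2+\epsilon+\delta}$, for some $c=c(a,p)>0$ and all $N$ large. This is the very Gaussian-density computation already carried out inside the proof of Lemma~\ref{lemma2}: one writes
\[
\mu_N^j=\frac{2^{N_1}\,\eta_N^j}{\sqrt{2\pi}\,N^{1/2+\delta}}\exp\Bigl\{-\tfrac{1}{2}\bigl(\sqrt{aN}\beta_*+jN^{-1/2-\delta}\bigr)^2\Bigr\},
\]
with $\eta_N^j\to 1$ uniformly in $|j|\leq N^{1/2+\epsilon+\delta}$. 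Expanding the square and using $2^{N_1}=2^{pN+O(1)}$ together with $\beta_*^2=2\log 2$, the exponent becomes $(p-a)N\beta_*^2/2-\sqrt{a}\beta_*jN^{-\delta}-j^2/(2N^{1+2\delta})+O(\log N)$. For $|j|\leq N^{1/2+\epsilon+\delta}$ with $\epsilon<\tfrac{1}{2}$, the second and third terms are, respectively, of sublinear order $O(N^{1/2+\epsilon})$ and $O(N^{2\epsilon})$, while in the non-cascading case $a<p$ being treated in this section the leading term $(p-a)N\beta_*^2/2$ is linear and strictly positive. This gives $\mu_N^j\geq e^{cN}$ for large $N$, uniformly in the relevant $j$'s.

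Combining, the sum in the lemma is at most $(2N^{1/2+\epsilon+\delta}+1)/(A^2 e^{cN})\to 0$. I do not expect any real obstacle: once Chebyshev is applied, everything reduces to a lower bound on $\mu_N^j$ that is essentially already in Lemma~\ref{lemma2}. The one delicate point lies in the complementary case $a=p$ (addressed later in the section), in which, for positive $j$ in the admissible range, $\mu_N^j$ can become (stretched-)exponentially small; in that regime Chebyshev is too weak and one would have to either restrict $j$ to the sub-range actually needed for the proof of Theorem~\ref{Thm_1} or replace Chebyshev by a tighter binomial concentration bound controlling $\mathbb{P}[\mathcal{C}(I_N^j)=0]$ and $\mathbb{P}[\mathcal{C}(I_N^j)\geq 1]$ separately. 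This is where the main technical work in extending the lemma to $a=p$ would lie.
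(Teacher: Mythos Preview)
Your proof is correct and essentially identical to the paper's: both recognize $\mathcal{C}(I_N^j)$ as a sum of independent Bernoulli variables, apply Chebyshev to get the bound $1/(A^2\mu_N^j)$, and then use the Gaussian density computation (already appearing in Lemma~\ref{lemma2}) to show $\mu_N^j\geq 2^{(p-a)N}e^{o(N)}$ uniformly for $|j|\leq N^{1/2+\epsilon+\delta}$, whence the sum is polynomially many copies of an exponentially small term. Your remark on the $a=p$ case is also on target: the paper handles it precisely by restricting to $j\leq -1$ and adjusting $\delta$ to lie in $(-\tfrac12,0)$, which is the first of the two fixes you suggest.
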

		
		\begin{proof}
			Recall~(\ref{def3}). Note that 
			$$\mathcal{C}(I_N^j)=\displaystyle\sum_{\sigma_1\in\mathcal{V}_{N_1}}\mathbbm{1}_{\left\{\Xi^{(1)}_{\sigma_1}\in I_N^j\right\}}.$$
			Then, for any integer $j\in \left[-N^{\frac{1}{2}+\epsilon+\delta},N^{\frac{1}{2}+\epsilon+\delta}\right]$, and all large $N$, we get
			\begin{align}
				\mathbb{E}[\mathcal{C}(I_N^j)]&=2^{N_1}\mathbb{P}[\Xi^{(1)}_{\sigma_1}\in I_N^j]
				=2^{N_1}\int_{I_N^j}\frac{1}{\sqrt{2\pi}}e^{-x^2/2}dx\\
				&\geq \frac{2^{N_1}}{\sqrt{2\pi}N^{\frac{1}{2}+\delta}} 
				\exp\left\{-\frac{1}{2}\left(\sqrt{aN}\beta_* + \frac{j+1}{N^{\frac{1}{2}+\delta}}\right)^2\right\}\\
				&\geq\frac{2^{(p-a)N}}{2\sqrt{2\pi}N^{\frac{1}{2}+\delta}}
				\exp\Big\{-\frac{1}{2}\Big[\Big(\sqrt{aN}\beta_*+\frac{j+1}{N^{\frac{1}{2}+\delta}}\Big)^2
				-\big(\sqrt{aN}\beta_*\big)^2\Big]\Big\}\\
				&=2^{(p-a)N}
				\exp\big\{o(N)\big\},
			\end{align} 
			since $j\leq N^{\frac{1}{2}+\epsilon+\delta}$ and $\epsilon<\frac12$. We also have that
			$$\left(\mathcal{C}(I_N^j)\right)^2 = 
			\sum_{\sigma_1\in\mathcal{V}_{N_1}}\mathbbm{1}_{\left\{\Xi^{(1)}_{\sigma_1}\in I_N^j\right\}} + 
			\sum_{\sigma_1,\tilde\sigma_1 \in\mathcal{V}_{N_1}\atop\sigma_1\ne\tilde\sigma_1 }
			\mathbbm{1}_{\left\{\Xi^{(1)}_{\sigma_1}\in I_N^j\right\}}
			\mathbbm{1}_{\left\{\Xi^{(1)}_{\widetilde{\sigma}_1}\in I_N^j\right\}},
			$$
			and, then 
			\begin{align*}
				\mathbb{E}\left[\left(\mathcal{C}(I_N^j)\right)^2\right] &=\mathbb{E}\left[\mathcal{C}(I_N^j)\right] +  2^{N_1}\left(2^{N_1}-1\right)\mathbb{P}\left[\Xi^{(1)}_{\sigma_1}\in I_N^j\right]^2\\
				&= \mathbb{E}\left[\mathcal{C}(I_N^j)\right] + \left(\mathbb{E}\left[\mathcal{C}(I_N^j)\right]\right)^2 - \mathbb{E}\left[\mathcal{C}(I_N^j)\right]\mathbb{P}\left[\Xi^{(1)}_{\sigma_1}\in I_N^j\right].				 
			\end{align*}
			So we have
			\begin{align*} 
				\frac{\mathbb{E}\left[\left(\mathcal{C}(I_N^j)\right)^2\right]-\left(\mathbb{E}\left[\mathcal{C}\left(I_N^j\right)\right]\right)^2}{\left(\mathbb{E}\left[\mathcal{C}(I_N^j)\right]\right)^2}&= \frac{1-\mathbb{P}\left[X_{\sigma_1}\in I_N^j\right]}{\mathbb{E}\left[\mathcal{C}(I_N^j)\right]}\leq \frac{1}{\mathbb{E}\left[\mathcal{C}(I_N^j)\right]},
			\end{align*}
			and hence
			\begin{align*}
				&\sum _{|j|\leq N^{\frac{1}{2}+\epsilon+\delta}}\mathbb{P}\left[ \left|\mathcal{C}(I_N^j)- \mathbb{E}\left[\mathcal{C}(I_N^j)\right]\right|> A \mathbb{E}\left[\mathcal{C}(I_N^j)\right]\right]\\
				&\leq 	\sum _{|j|\leq N^{\frac{1}{2}+\epsilon+\delta}} 	\frac{\mathbb{E}\left[\left(\mathcal{C}(I_N^j)\right)^2\right]-\left(\mathbb{E}\left[\mathcal{C}(I_N^j)\right]\right)^2}{A^2\left(\mathbb{E}\left[\mathcal{C}(I_N^j)\right]\right)^2}\\
				&\leq \sum _{|j|\leq N^{\frac{1}{2}+\epsilon+\delta}} 	\frac{1}{A^2\mathbb{E}\left[\mathcal{C}(I_N^j)\right]}
				\leq \frac{2N^{\frac12+\epsilon+\delta}}{A^2 2^{(p-a)N}}
				\exp\big\{o(N)\big\},
			\end{align*}	
			which converges to zero as $N$ goes to infinity, since $p>a$.
		\end{proof}

		Recall from \eqref{defmax} and \eqref{def6} the definition of $M_N^j$ and $Y_N^j$, respectively. Next we prove that these two variables are {\em close} to each other. 
		\begin{lemma}\label{MN_approx_YN}
			For all $N>0$ and all $j\in\mathbb{Z}$, we have
			$$0\leq M_N^j-Y_N^j\leq\frac{\sqrt{a}}{N^{\frac{1}{2}+\delta}}.$$
		\end{lemma}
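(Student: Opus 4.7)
The plan is to prove both inequalities by direct comparison, exploiting the decomposition of $\Xi_\sigma$ in~\eqref{gaussians} together with the two-sided deterministic bounds on $\Xi^{(1)}_{\sigma_1}$ provided by membership in $I_N^j$, which has length $1/N^{1/2+\delta}$.

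First I would unfold the definitions of $M_N^j$ and $Y_N^j$ to emphasize that the maximization in each is over precisely the same set, namely all $\sigma = \sigma_1\sigma_2 \in \mathcal{V}_N$ with $\Xi^{(1)}_{\sigma_1} \in I_N^j$ (with $\sigma_2$ unconstrained). Using \eqref{gaussians}, for any such $\sigma$ we have $\Xi_\sigma = \sqrt a\,\Xi^{(1)}_{\sigma_1} + \sqrt{1-a}\,\Xi^{(2)}_{\sigma_1\sigma_2}$, while the expression inside the max defining $Y_N^j$ replaces $\Xi^{(1)}_{\sigma_1}$ by the left endpoint $\sqrt{aN}\beta_* + j/N^{1/2+\delta}$ of $I_N^j$. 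Since this replacement is the only difference between the two quantities, the problem reduces to controlling $\sqrt a\bigl(\Xi^{(1)}_{\sigma_1} - (\sqrt{aN}\beta_* + j/N^{1/2+\delta})\bigr)$ uniformly in $\sigma_1$ with $\Xi^{(1)}_{\sigma_1} \in I_N^j$.

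For the lower bound $M_N^j \geq Y_N^j$, I use that $\Xi^{(1)}_{\sigma_1} \geq \sqrt{aN}\beta_* + j/N^{1/2+\delta}$, so that for every feasible $\sigma$,
\begin{equation*}
\Xi_\sigma \;\geq\; \sqrt a\Bigl(\sqrt{aN}\beta_* + \tfrac{j}{N^{1/2+\delta}}\Bigr) + \sqrt{1-a}\,\Xi^{(2)}_{\sigma_1\sigma_2}.
\end{equation*}
Taking the maximum over the feasible $\sigma$ on both sides yields $M_N^j \geq Y_N^j$. For the upper bound, I use the complementary inequality $\Xi^{(1)}_{\sigma_1} \leq \sqrt{aN}\beta_* + (j+1)/N^{1/2+\delta}$, which gives
\begin{equation*}
\Xi_\sigma \;\leq\; \sqrt a\Bigl(\sqrt{aN}\beta_* + \tfrac{j}{N^{1/2+\delta}}\Bigr) + \tfrac{\sqrt a}{N^{1/2+\delta}} + \sqrt{1-a}\,\Xi^{(2)}_{\sigma_1\sigma_2},
\end{equation*}
and again taking the max over feasible $\sigma$ produces $M_N^j \leq Y_N^j + \sqrt a/N^{1/2+\delta}$.

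There is no real obstacle here: the lemma is a deterministic sandwich bound whose proof amounts to noting that $I_N^j$ has length $1/N^{1/2+\delta}$ and that the first-level contribution enters linearly with coefficient $\sqrt a$. The only small subtlety is to verify that the maximization sets for $M_N^j$ and $Y_N^j$ coincide, so that one may compare the two quantities term by term under a common supremum.
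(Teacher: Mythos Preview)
Your proof is correct and follows essentially the same approach as the paper's: both arguments exploit the pointwise decomposition $\Xi_\sigma = \sqrt a\,\Xi^{(1)}_{\sigma_1} + \sqrt{1-a}\,\Xi^{(2)}_{\sigma_1\sigma_2}$ and sandwich $\Xi^{(1)}_{\sigma_1}$ between the endpoints of $I_N^j$. The only cosmetic difference is that for the lower bound the paper explicitly picks the maximizer $(\widetilde\sigma_1,\widetilde\sigma_2)$ of $\Xi^{(2)}$ and evaluates $\Xi_{\widetilde\sigma}$, whereas you invoke the general fact that a pointwise inequality passes to the maximum over a common index set; both yield the same conclusion for the same reason.
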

		
		\begin{proof}
			For $\sigma_1\in\mathcal{V}_{N_1}$ such that $\Xi^{(1)}_{\sigma_1}\in I_N^j$, we have
			\begin{align*}
				\sqrt{a}\Xi_{\sigma_1}^{(1)}+\sqrt{1-a}\Xi_{\sigma_1\sigma_2}^{(2)}\leq\sqrt{a}\left(\sqrt{aN}\beta_*+\frac{j+1}{N^{\frac{1}{2}+\delta}}\right)+\sqrt{1-a}\Xi_{\sigma_1\sigma_2}^{(2)}\leq Y_N^j+\frac{\sqrt{a}}{N^{\frac{1}{2}+\delta}}, 
			\end{align*}
			hence, taking maximum in $\sigma_1\in\mathcal{V}_{N_1}$ such that $\Xi_{\sigma_1}^{(1)}\in I_N^j$, we have 
			\begin{equation}\label{eq6}
				M_N^j\leq Y_N^j+\frac{\sqrt{a}}{N^{\frac{1}{2}+\delta}}.
			\end{equation}
			For the other inequality, let us call $\widetilde{\sigma}_1$ and $\widetilde{\sigma}_2$ the indices such that
			$$\max_{\sigma_1\in\mathcal{V}_{N_1}:\Xi^{(1)}_{\sigma_1}\in I_N^j}\Xi^{(2)}_{\sigma_1\sigma_2}=\Xi^{(2)}_{\widetilde{\sigma}_1\widetilde{\sigma}_2}.$$
			
			Then
			\begin{align*}
				M_N^j\geq \sqrt{a}\Xi^{(1)}_{\widetilde{\sigma}_1}+\sqrt{1-a}\Xi^{(2)}_{\widetilde{\sigma}_1\widetilde{\sigma}_2}
				\geq \sqrt{a}\left(\sqrt{aN}\beta_*+\frac{j}{N^{\frac{1}{2}+\delta}}\right)+\sqrt{1-a}\Xi^{(2)}_{\widetilde{\sigma}_1\widetilde{\sigma}_2},
			\end{align*}
			that is,
			\begin{equation}\label{eq7}
				M_N^j\geq Y_N^j,
			\end{equation}
			which completes the proof. 
		\end{proof}
		
		For our next result we will use a similar notation as the one used in the paragraph above Lemma \ref{lema_conjunta}. Let us consider $j(1), j(2),\cdots$ in the set $\left\{j:|j|\leq N^{\frac{1}{2}+\delta+\epsilon}\right\}$ such that $M_N^{j(1)}>M_N^{j(2)}>\cdots$. And let us call $\hat{\sigma}(i)$ the configuration in $\mathcal{V}_N$ such that $M_N^{j(i)}=\Xi_{\hat{\sigma}(i)}$, for each positive integer $i$.
		\begin{proposition}\label{Thm_1_inside_the_box} 
			Let $\mathcal{P}=\{\xi_i:i\geq 1\}$  be the Poisson point process in Definition \ref{PPP} and consider  $p>a$. Let $W_1,\cdots,W_k$, be independent centered Gaussian variables with variance $1-a$, which also are independent of the process $\mathcal{P}$. Then 
			$$
			\left( u_N^{-1}\left(M_N^{j(1)}\right), \Xi^{(1)}_{\hat{\sigma}_1(1)}-\sqrt{aN}\beta_*,\dots,u_N^{-1}\left(M_N^{j(k)}\right), \Xi_{\hat{\sigma}_1(k)}^{(1)}-\sqrt{aN}\beta_*\right)
			$$
			converges in distribution to $\left(\xi_1,W_1,\cdots,\xi_k,W_k\right)$ as $N$ goes to infinity.
		\end{proposition}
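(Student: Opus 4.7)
My plan is to transfer the joint convergence established in Proposition~\ref{casi_teorema} for $(Z_N^j)$ over to $(M_N^j)$ via two successive replacements, after which the identification of the second coordinate follows from the construction. By Lemma~\ref{MN_approx_YN}, $0\le M_N^j - Y_N^j \le \sqrt{a}/N^{1/2+\delta}$ uniformly in $j$, so $u_N^{-1}(M_N^j) - u_N^{-1}(Y_N^j) = \beta_*\sqrt{N}\,(M_N^j - Y_N^j) = O(N^{-\delta})$ uniformly in $j$. Since the top-$k$ points of the limiting Poisson process $\mathcal{P}$ are a.s.\ distinct, for large $N$ the top-$k$ rankings of $\{M_N^j\}$ and $\{Y_N^j\}$ coincide with probability tending to one and the corresponding $u_N^{-1}$-values agree up to $o(1)$. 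Moreover, $\Xi^{(1)}_{\hat\sigma_1(i)}\in I_N^{j(i)}$ by construction, so $\bigl|\Xi^{(1)}_{\hat\sigma_1(i)} - \sqrt{aN}\beta_* - j(i)/N^{1/2+\delta}\bigr| \le 1/N^{1/2+\delta}$, and it suffices to prove that $\bigl(u_N^{-1}(Y_N^{j(i)}),\, j(i)/N^{1/2+\delta}\bigr)_{i=1}^k$, with $j(i)$ now denoting the rank among the $Y_N^j$'s, converges in distribution to $(\xi_i, W_i)_{i=1}^k$.

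The central step is to deduce this limit for $Y_N^j$ from the corresponding limit for $Z_N^j$ in Proposition~\ref{casi_teorema}. Conditioning on the $\sigma$-algebra $\mathcal{F}^{(1)}$ generated by $\{\Xi^{(1)}_{\sigma_1} : \sigma_1\in\mathcal{V}_{N_1}\}$, the variables $(Y_N^j)_j$ are independent (they involve disjoint families of $\Xi^{(2)}$'s indexed by the disjoint sets of $\sigma_1$'s falling in different $I_N^j$'s), and $Y_N^j - \sqrt{a}\bigl(\sqrt{aN}\beta_* + j/N^{1/2+\delta}\bigr)$ is distributed as $\sqrt{1-a}$ times the maximum of $2^{N_2}\mathcal{C}(I_N^j)$ i.i.d.\ standard Gaussians. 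I would then rerun the proofs of Lemma~\ref{lemma2}, Lemma~\ref{lema_conjunta} and Proposition~\ref{casi_teorema} with $2^{N_2}\mathbb{E}[\mathcal{C}(I_N^j)]$ replaced throughout by $2^{N_2}\mathcal{C}(I_N^j)$: the only substantive change occurs in the Riemann-sum reduction~\eqref{eq12}--\eqref{eq2}, whose summand picks up the multiplicative factor $\mathcal{C}(I_N^j)/\mathbb{E}[\mathcal{C}(I_N^j)]$. Choosing $A_N \to 0$ slowly enough that $\sum_{|j|\le N^{1/2+\delta+\epsilon}}\mathbb{P}\bigl[|\mathcal{C}(I_N^j)/\mathbb{E}[\mathcal{C}(I_N^j)] - 1| \ge A_N\bigr] \to 0$ --- which is possible because the upper bound appearing in the proof of Lemma~\ref{second moment} is exponentially small by virtue of $p>a$ --- this factor equals $1+o(1)$ uniformly over $|j|\le N^{1/2+\delta+\epsilon}$ on an event of probability tending to one. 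Hence the conditional Riemann sum still tends to $\Phi_B e^{-x}$, dominated convergence promotes the conditional convergence to unconditional, and the same substitution propagates through Lemma~\ref{lema_conjunta} and the finite-dimensional identity~\eqref{finito_dim}, delivering the claim.

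It remains to check that the top-$k$ extremes of $\{Y_N^j\}$ all occur inside $\{|j|\le N^{1/2+\delta+\epsilon}\}$ with probability tending to one: a union bound combined with the Gaussian tail estimate~\eqref{bds} gives $\mathbb{P}\bigl[\max_{|j|>N^{1/2+\delta+\epsilon}} u_N^{-1}(Y_N^j) > -R\bigr] \to 0$ for any fixed $R > 0$, while the top-$k$ points of $\mathcal{P}$ are a.s.\ bounded below. I expect the principal technical point to be the calibration of $A_N$ in the central step: it must vanish slowly enough that Lemma~\ref{second moment} still guarantees the good event has probability tending to one uniformly over the polynomial number of relevant indices, yet fast enough that the multiplicative fluctuation $1+o(1)$ is inert in the final Riemann sum and the limiting Poisson constant $K=1$ is preserved.
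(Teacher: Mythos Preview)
Your proof is correct and follows essentially the same route as the paper: reduce from $M_N^j$ to $Y_N^j$ via Lemma~\ref{MN_approx_YN}, then transfer Proposition~\ref{casi_teorema} from $Z_N^j$ to $Y_N^j$ using Lemma~\ref{second moment} to control the ratio $\mathcal{C}(I_N^j)/\mathbb{E}[\mathcal{C}(I_N^j)]$; the paper sandwiches $Y_N^j$ between $Z_N^{j,1-A}$ and $Z_N^{j,1+A}$ for fixed $A$ and then lets $A\downarrow 0$, which is equivalent to your choice of $A_N\to 0$ exploiting the exponential bound in the proof of Lemma~\ref{second moment}. Your final paragraph is unnecessary here: by definition the indices $j(i)$ are already restricted to $|j|\le N^{1/2+\delta+\epsilon}$, so no tail control outside the box is required for this proposition (that step belongs to Lemma~\ref{equals_maximum}).
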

		
		\begin{proof}
			Since, for each $i$, $\Xi^{(1)}_{\tilde{\sigma}_1(i)}\in I_N^{j(i)}$, we have that
			$$ 0\leq \Xi^{(1)}_{\hat{\sigma}_1(i)}-\left[\sqrt{aN}\beta_*+\frac{j(i)}{N^{\frac{1}{2}+\delta}}\right]\leq \frac{1}{N^{\frac{1}{2}+\delta}}.$$
			Then, using Lemma \ref{MN_approx_YN}, it is enough to prove that
			$$\left(u_N^{-1}\left(Y_N^{j(1)}\right),\frac{j(1)}{N^{\frac{1}{2}+\delta}};\cdots;u_N^{-1}\left(Y_N^{j(k)}\right),\frac{j(k)}{N^{\frac{1}{2}+\delta}}\right)\to (\xi_1,W_1;\cdots;\xi_k,W_k)$$ 
			in distribution.
			
			As part of the strategy, we compare the distribution of $(Y_N^j)$ to that of a perturbation of $(Z_N^j)$.
			For $\eta>0$, let us define $Z_N^{j,\eta}$ as
			\begin{equation}
				Z_N^{j,\eta}=\sqrt{a}\left(\frac{j}{N ^{\frac{1}{2}+\delta}}+\sqrt{aN}\beta_c\right)+\sqrt{1-a}\max_{k=1,\dots,2^{(1-p)N}\eta\mathbb{E}[\mathcal{C}(I_N^j)]}\widetilde{\Xi}_{j,k}.
			\end{equation}
			Obviously adapting the proof of Proposition \ref{casi_teorema}, we find, under the same conditions of the latter result, that
			\begin{align}\label{eta}
				\lim_{N\to\infty}& \mathbb{P}\left[\bigcap_{i=1}^k\left\{ u_N^{-1}\left(Z_N^{j(i),\eta}\right)\in [c_i,d_i], \frac{j(i)}{N^{\frac{1}{2}+\delta}}\in B_i\right\} \right]\nonumber\\
				&= \eta^k
				\left(\prod_{i=1}^k\Phi_{B_i}\right)
				\left(\prod_{i=1}^{k-1}\int_{c_i}^{d_i}e^{-s}ds\right)\left(\int_{c_k}^{d_k}e^{-x}e^{-\eta e^{-x}}dx\right),
			\end{align}
			which may be readily checked to mean that 
			$$\left(u_N^{-1}\left(Z_N^{j(1),\eta}\right),\frac{j(1)}{N^{\frac{1}{2}+\delta}};\cdots;u_N^{-1}\left(Z_N^{j(k),\eta}\right),\frac{j(k)}{N^{\frac{1}{2}+\delta}}\right)\to (\xi^\eta_1,W_1;\cdots;\xi^\eta_k,W_k),$$ 
			where $\xi^\eta\equiv\xi+\log\eta$.

			Then, for $0<A<1$, using Lemma~\ref{second moment} and~(\ref{eta}), we get 
			\begin{align*}
				&\limsup_{N\to\infty}\mathbb{P}\left[\bigcap_{i=1}^k \left\{u_N^{-1}(Y_N^{j(i)})\geq x_i,\frac{j(i)}{N^{\frac{1}{2}+\delta}}\in B_i\right\}\right]\\
				=&\limsup_{N\to\infty}\mathbb{P}\left[\bigcap_{i=1}^k \left\{u_N^{-1}(Y_N^{j(i)})\geq x_i,\frac{j(i)}{N^{\frac{1}{2}+\delta}}\in B_i\right\}, \bigcap_{i=1}^k\left\{\left|\mathcal{C}(I_N^j)-\mathbb{E}\left[\mathcal{C}(I_N^j)\right]\right|<A\mathbb{E}\left[\mathcal{C}(I_N^j)\right]\right\}\right]\\
				\leq&\lim_{N\to\infty} \mathbb{P}\left[\bigcap_{i=1}^k \left\{u_N^{-1}(Z_N^{j(i),1+A})\geq x_i,\frac{j(i)}{N^{\frac{1}{2}+\delta}}\in B_i\right\} \right]
				=\left(\prod_{i=1}^{k}\Phi_{B_i}\right)\mathbb{P}\left[\bigcap_{i=1}^k\left\{\xi_i\geq x_i-\log(1+A)\right\}\right]
			\end{align*}
			Similarly, we get a lower bound for the liminf of the probability on the left hand side of the above expression by exchanging $A$ with $-A$. Since $A$ is arbitrary, the result promptly follows.
		\end{proof}	
		
		\begin{proposition}\label{point_process_convergence}
			For $0<\epsilon<\frac{1}{2}$ and $\delta>0$, the point process $\widehat{\mathcal{P}}_N$, defined as
			\begin{equation*}
				\widehat{\mathcal{P}}_N:=\sum_{j:|j|\leq N^{\frac{1}{2}+\epsilon+\delta}}\mathbbm{1}_{u_N^{-1}(M_N^j)},
			\end{equation*}
			converges in distribution to a Poisson point process $\mathcal{P}$ with intensity measure $e^{-x}dx$.
		\end{proposition}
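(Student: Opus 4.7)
The plan is to deduce the point-process convergence from the joint convergence of the top ordered values established in Proposition~\ref{Thm_1_inside_the_box}, working in the case $a<p$ so that the target $\mathcal{P}$ has intensity $e^{-x}dx$ (i.e., $K=1$). Since $\widehat{\mathcal{P}}_N$ is a.s.\ simple and supported on finitely many points (the sum runs over $|j|\le N^{1/2+\epsilon+\delta}$), and the target $\mathcal{P}$ is a.s.\ simple with $\mathcal{P}((c,\infty))<\infty$ for every $c\in\R$, the desired vague convergence of random measures will reduce to convergence in distribution of the count vectors $(\widehat{\mathcal{P}}_N((c_r,d_r]))_{r=1}^{m}$ to independent Poisson variables with parameters $\int_{c_r}^{d_r}e^{-x}dx$, for every finite family of disjoint bounded intervals $(c_r,d_r]\subset\R$.

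The first step is to project Proposition~\ref{Thm_1_inside_the_box} onto its first coordinates. For each $k\ge 1$, writing $P_N^{(i)}:=u_N^{-1}(M_N^{j(i)})$ for the $i$-th largest point of $\widehat{\mathcal{P}}_N$, the proposition yields
\begin{equation}\label{planorder}
(P_N^{(1)},\dots,P_N^{(k)}) \longrightarrow (\xi_1,\dots,\xi_k) \text{ in distribution as } N\to\infty,
\end{equation}
where $\xi_1>\xi_2>\cdots$ are the ordered points of $\mathcal{P}$. Fix $c\in\R$; for every integer $m\ge 0$, the identity of events
\[
\{\widehat{\mathcal{P}}_N((c,\infty))=m\} = \{P_N^{(m)}>c\ge P_N^{(m+1)}\}
\]
holds (with the convention $P_N^{(0)}=+\infty$). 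Since the joint law of $(\xi_1,\dots,\xi_{m+1})$ is absolutely continuous, the boundary event $\{\xi_j=c\}$ has zero probability, so \eqref{planorder} (with $k=m+1$) gives
\[
\mathbb{P}[\widehat{\mathcal{P}}_N((c,\infty))=m] \longrightarrow \mathbb{P}[\xi_m>c\ge \xi_{m+1}] = e^{-e^{-c}}\tfrac{(e^{-c})^{m}}{m!},
\]
which is the $\mathrm{Poisson}(e^{-c})$ mass function.

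The extension to finitely many disjoint bounded intervals $(c_r,d_r]$ is routine: setting $c:=\min_r c_r$ and choosing $k$ so large that $\mathbb{P}[\xi_{k+1}>c]<\eta$ for a given $\eta>0$, the counts $\widehat{\mathcal{P}}_N((c_r,d_r])$ are, up to an event of asymptotic probability at most $\eta$, determined by how the top $k$ order statistics $P_N^{(1)},\dots,P_N^{(k)}$ distribute among the intervals; \eqref{planorder} combined with absolute continuity of the limiting joint law then delivers convergence to the corresponding counts for $\mathcal{P}$, which are independent Poissons with the stated rates by Definition~\ref{PPP}. The main obstacle --- though in truth a mild one --- is precisely this passage from convergence of order statistics to convergence of arbitrary counts; it is handled entirely by the absolute continuity of the limiting joint law and the a.s.\ finiteness of $\mathcal{P}((c,\infty))$, with no new estimate on $\widehat{\mathcal{P}}_N$ needed beyond what Proposition~\ref{Thm_1_inside_the_box} already provides. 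In this sense, the present proposition is essentially a point-process repackaging of that result.
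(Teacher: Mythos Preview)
Your argument is correct and takes a genuinely different route from the paper. The paper appeals to a Kallenberg--Leadbetter criterion (Theorem A.1 of \cite{leadbetter2012extremes}), verifying directly that (i) the expected counts $\mathbb{E}[\widehat{\mathcal{P}}_N((c,d])]$ converge, and (ii) the void probabilities $\mathbb{P}[\widehat{\mathcal{P}}_N(B)=0]$ converge, both computed by exploiting the independence of the $M_N^j$ across $j$ and the one-point estimate $\sum_j\mathbb{P}[M_N^j>u_N(x)]\to e^{-x}$ (obtained from Corollary~\ref{corollary_lemma2} via Lemmas~\ref{second moment} and~\ref{MN_approx_YN}). Your approach instead feeds off the already-proven Proposition~\ref{Thm_1_inside_the_box}: convergence of the top-$k$ ordered values of $\widehat{\mathcal{P}}_N$ to $(\xi_1,\dots,\xi_k)$, together with the a.s.\ finiteness of $\mathcal{P}$ on half-lines $(c,\infty)$, yields convergence of all finite-dimensional count distributions, hence the vague convergence. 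What your route buys is economy---no new estimate is needed beyond what Proposition~\ref{Thm_1_inside_the_box} already packages---while the paper's route is more self-contained, not relying on that proposition and making the independence of the $M_N^j$ do the work explicitly. Both are standard reductions; neither is circular, since Proposition~\ref{Thm_1_inside_the_box} is proved independently of the present proposition.
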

		
		\begin{proof}
			Taking $B=\mathbb{R}$ in Corollary \ref{corollary_lemma2}, and using Lemmas \ref{second moment} and \ref{MN_approx_YN}, we have
			\begin{equation}\label{resultado1}
				\lim_{N\to\infty}\sum_{j:|j|\leq N^{\frac{1}{2}+\delta+\epsilon}}\mathbb{P}\left[M^j_N>u_N(x)\right] = e^{-x}.
			\end{equation}
			By Theorem A.1 of \cite{leadbetter2012extremes}, it is enough to show 
			\begin{enumerate}
				\item [(i)]   $\lim_{N\to\infty}\mathbb{E}\left[\widehat{\mathcal{P}}_N((c,d])\right]= \mathbb{E}[\mathcal{P}((c,d])]$ for any real numbers $c<d$.
				\item [(ii)] Let be $B=\cup_{i=1}^k(c_i,d_i]$  for $-\infty< c_1<d_1<c_2<d_2<\dots<c_k<d_k$, then
				$$
				\lim_{N\to\infty}\mathbb{P}\big[\widehat{\mathcal{P}}_N(B)=0\big]= \mathbb{P}\big[\mathcal{P}(B)=0\big].
				$$
			\end{enumerate}
			Proof of (i). By definition, we have
			\begin{align*}
				\mathbb{E}\left[\widehat{\mathcal{P}}_N((c,d])\right]&=\sum_{|j|\leq N^{\frac{1}{2}+\delta +\epsilon}}\mathbb{P}\big[M^j_N\in (u_N(c),u_N(d)]\big]\\
				&=\sum_{|j|\leq N^{\frac{1}{2}+\delta+\epsilon}}\mathbb{P}\big[M_N^j> u_N(c)]\big] -\sum_{|j|\leq N^{\frac{1}{2}+\delta +\epsilon}}\mathbb{P}\big[M_N^j > u_N(d)]\big],
			\end{align*} 
			which, by \eqref{resultado1}, converges to $e^{-c}-e^{-d}$. As we can easily see, $ \mathbb{E}[\mathcal{P}((c,d])]=e^{-c}-e^{-d}$, and (i) follows.
			
			Proof of (ii). We get, by the independence of the variables $M_N^j$ as $j$ varies, that
			\begin{align*}
				\mathbb{P}\left[\widehat{\mathcal{P}}_N(B)=0\right]&=\mathbb{P}\left[ M_N^j\notin u_N(B), \forall j: |j|\le N^{\frac{1}{2}+\epsilon+\delta} \right]\\
				&=\exp\left\{\sum_{|j|\leq N^{\frac{1}{2}+\delta+\epsilon}}\log\left[1-\mathbb{P}[M_N^j\in u_N(B)]\right]\right\}.
			\end{align*}
			We can use a Taylor's Theorem argument similar to the one we used in the proof of Lemma \ref{lemma2}, and obtain
			\begin{align*} 
				\lim_{N\to\infty}\mathbb{P}\left[\widehat{\mathcal{P}}(B)=0\right]&=\lim_{N\to\infty} \exp\left\{ -\sum_{|j|\leq N^{\frac{1}{2}+\epsilon+\delta}} \mathbb{P}\left[ M_N^j \in u_N(B)\right] \right\}\\
				&=\lim_{N\to\infty}\exp\left\{ -\sum_{i=1}^{k}\sum_{|j|\leq N^{\frac{1}{2}+\epsilon+\delta}} \mathbb{P}\left[ M_N^j \in (u_N(c_i),u_N(d_i)]\right]\right\}\\
				&=\exp\left\{-\sum_{i=1}^k(e^{-c_i}-e^{-d_i})\right\}.
			\end{align*}
			On the other hand, we also have that $\mathbb{P}\left[\mathcal{P}(B)=0\right]=\exp\left\{-\sum_{i=1}^k(e^{-c_i}-e^{-d_i})\right\}$. And this completes the proof.
		\end{proof}

		We need a final piece to complete the proof of Theorem \ref{Thm_1}. Up to this point, we have been assuming that our indices $j$ are in the interval $[-N^{\frac{1}{2}+\delta+\epsilon},N^{\frac{1}{2}+\delta+\epsilon}]$. Now we would like to remove this restriction, and, for that, let us recall, from the paragraph above Proposition \ref{Thm_1_inside_the_box}, that $\hat{\sigma}(i)$ is defined as the configuration that satisfies $M_N^{j(i)}=\Xi_{\hat{\sigma}(i)}$, where the indices $j(i)\in\left\{j:|j|\leq N^{\frac{1}{2}+\delta+\epsilon}\right\}$ are such that $M_N^{j(1)}>M_N^{j(2)}>\cdots$. Also, without any restriction in $j$,  we say that $\sigma(i)$ is the configuration in which the $i^{th}$ maximum of $M_N^j$ is attained, that is, $M_N^{j(i)}=\Xi_{\sigma(i)}$. The next lemma shows that the mentioned assumption is removable.

		\begin{lemma} \label{equals_maximum} For any $k\ge 1$ we have
			\begin{equation*}
				\lim_{N\to\infty}\mathbb{P}\left[\Xi_{\sigma(1)}=\Xi_{\hat{\sigma}(1)},\dots,\Xi_{\sigma(k)}=\Xi_{\hat{\sigma}(k)}\right]=1.
			\end{equation*}
		\end{lemma}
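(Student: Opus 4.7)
The plan is to show that, with probability tending to $1$, none of the maxima $M_N^{j_0}$ with $|j_0|>N^{1/2+\delta+\epsilon}$ can rank among the top $k$ values of $\{M_N^j:j\in\mathbb{Z}\}$. On that event, the $i$-th largest $M_N^j$ and its maximizer $\sigma(i)$ coincide respectively with the $i$-th largest $M_N^j$ over the restricted range and its maximizer $\hat\sigma(i)$, for $i=1,\dots,k$, yielding $\Xi_{\sigma(i)}=\Xi_{\hat\sigma(i)}$.

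Fix $\eta>0$. By the convergence \eqref{BK}, $u_N^{-1}(\Xi_{\sigma(k)})$ converges in distribution to $\xi_k$, which is a.s.\ finite; hence one may choose $c=c(\eta,k)$ with $\mathbb{P}\!\left[\Xi_{\sigma(k)}>u_N(-c)\right]\ge 1-\eta$ for all large $N$. On the event $\{\Xi_{\sigma(k)}>u_N(-c)\}$, the above reduction succeeds unless there exists $\sigma\in\mathcal{V}_N$ with $|\Xi_{\sigma_1}^{(1)}-\sqrt{aN}\beta_*|>N^\epsilon$ and $\Xi_\sigma>u_N(-c)$. By Markov's inequality, it suffices to show that the expected number of such $\sigma$,
\begin{equation*}
R_N:=2^N\!\!\int_{|y-\sqrt{aN}\beta_*|>N^\epsilon}\!\!\!\!\frac{e^{-y^2/2}}{\sqrt{2\pi}}\,\mathbb{P}\!\left[X>\frac{u_N(-c)-\sqrt{a}\,y}{\sqrt{1-a}}\right]dy,
\end{equation*}
with $X$ a standard Gaussian, tends to zero.

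To bound $R_N$, I split the range of $y$ at $u_N(-c)/\sqrt{a}$. In the lower subregion, where the argument of the tail probability is positive, substituting $y=\sqrt{aN}\beta_*+s$ and expanding the combined quadratic exponent exactly as in the derivation leading to \eqref{eq14} produces $y^2/2+\left[(u_N(-c)-\sqrt{a}y)/\sqrt{1-a}\right]^2/2=N\log 2+s^2/(2(1-a))+O(1)$; combining with the Gaussian tail bound \eqref{bds} and $2^N=e^{N\log 2}$ bounds the contribution of this subregion by $O\bigl(N^{-1/2-\epsilon}\exp\{-N^{2\epsilon}/(2(1-a))\}\bigr)$, which vanishes super-polynomially. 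In the upper subregion, the crude bound $\mathbb{P}[X>\cdot]\le 1$ combined with a further application of \eqref{bds} to the $y$-integral yields a contribution bounded by $O\bigl(N^{-1/2}\,2^{(1-1/a)N}\bigr)$, which is exponentially small since $a<1$. Hence $R_N\to 0$; because $\eta>0$ was arbitrary, the lemma follows. The main technical step is the quadratic expansion in the lower subregion, which is essentially the one carried out in Lemma~\ref{lemma2}; the case $a=p$ is handled identically.
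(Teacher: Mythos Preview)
Your argument is correct: the first-moment bound $R_N\to 0$ does the job, and the quadratic expansion is the same one behind Lemma~\ref{lemma2}. (One small imprecision: the remainder in the exponent is $O(\log N)$ rather than $O(1)$, owing to the $-\frac{\log N+\kappa}{2\beta_*\sqrt N}$ in $u_N$; this is harmless, since it only produces polynomial prefactors that are swallowed by the stretched-exponential $\exp\{-N^{2\epsilon}/(2(1-a))\}$.)

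The paper's proof follows a different, shorter route. It observes that the restricted point process
$\widehat{\mathcal P}_N=\sum_{|j|\le N^{1/2+\delta+\epsilon}}\mathbbm{1}_{u_N^{-1}(M_N^j)}$
is, pointwise, dominated by the full point process $\mathcal P_N$ of~\eqref{BK}; since by Proposition~\ref{point_process_convergence} and~\eqref{BK} both converge to the \emph{same} Poisson limit $\mathcal P$ (in particular, their mean measures on any half-line $(c,\infty)$ converge to the same value), the nonnegative difference $\mathcal P_N-\widehat{\mathcal P}_N$ must vanish in probability on each $(c,\infty)$, which is exactly the event you isolate. Thus the paper sidesteps the direct estimate by leveraging the two convergence statements already in hand.

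Your direct computation is essentially equivalent to verifying $\mathbb E\big[\mathcal P_N((-c,\infty))-\widehat{\mathcal P}_N((-c,\infty))\big]\to 0$ by hand; it is more self-contained but repeats work done in Lemma~\ref{lemma2}, whereas the paper's sandwich argument is a two-line consequence of results already proved.
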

		\begin{proof}
			The proof follows readily from the fact that, on the one hand, $\widehat{\mathcal{P}}_N$ is (clearly) dominated by ${\mathcal{P}}_N$, and on the other hand, from Proposition~\ref{point_process_convergence} and~(\ref{BK}), both point processes have the same limit.
		\end{proof}	
		
		\begin{proof}[Proof of Theorem \ref{Thm_1}]
			Note that the case $a<p$ follows readily from Proposition \ref{Thm_1_inside_the_box} and Lemma \ref{equals_maximum}. It remains to prove the case $a=p$, for which all the results for $a<p$ extend with the corresponding changes in statements, and similar proofs. For that reason we will only point out some little modifications in the results above. 
			
			In the intervals $I_N^j$, defined in \eqref{def3}, let us only consider $j\leq -1$. Then, following the lines of the proof of the case $a<p$, we obtain a similar result as the one in Proposition \ref{casi_teorema}, where, the independent and centered Gaussian variables $W_1,\cdots,W_k$ are now conditioned on being negative and $K=\frac{1}{2}$ in the Poisson point process $\mathcal{P}=\{\xi_i:i\geq 1\}$ of Definition \ref{PPP}. Also, considering $-\frac{1}{2}<\delta<0$ in the intervals $I_N^j$, the corresponding result obtained in Lemma \ref{second moment} also holds.
			
			As before, given \eqref{BK}, it will be enough to prove the following analogous result to Proposition \ref{point_process_convergence}: 
			$$\sum_{j: N^{\frac{1}{2}+\epsilon+\delta}\leq j\leq -1}\mathbbm{1}_{u_N^{-1}(M_N^j)}$$
			converges in distribution to a Poisson point process $\mathcal{P}$ with intensity measure $\frac{1}{2}e^{-x}dx$, which will be a straightforward adaptation of the previous proof.	
		\end{proof}

		\section{Proof of Theorem \ref{Thm_above_FT} }\label{sec:below FT}
		\setcounter{equation}{0}
		
		We recall that we are resorting to Skorohod's Representation Theorem to have our environment family of random variables $\Xi$ realized in a space where the convergence stated in Theorem \ref{Thm_1} (for the original space) holds almost surely; i.e.,
		we have that the convergences in~\eqref{BK},~\eqref{convergence_exp} and 
		\begin{equation}\label{limit_thm_1}
			\lim_{N\to\infty}\left[\Xi^{(1)}_{\sigma_1(k)}-\sqrt{aN}\beta_*\right]=W_k\, , \text{ for all } k\ge 1,\, 
		\end{equation}
		hold almost surely; we further assume that we are in the event of that space where those convergences take place everywhere.

		We start with some definitions. Consider the following sets:
		\begin{equation*}
			I:=\{ i\geq 1 : W_i>L\} \, \text{ and } J:=\{ j\geq 1 : W_j <L  \}\, ,
		\end{equation*}
		and, given $M\ge 1$, the subsets $I_M:=\{ i_1, \dots, i_M\}\subset I$ and  $J_M:=\{j_1,\dots, j_M\}\subset J$ characterized by:
		\begin{enumerate}
			\item [(a)] $ W_{i_1}< W_{i_2}<\dots < W_{i_M} $ and $ W_{j_1}< W_{j_2}<\dots W_{j_M} $, 
			\item [(b)] $\max I_M< \min I\setminus I_M$ and $\max J_M < \min J\setminus J_M$. 
		\end{enumerate}	
		Let us denote by $X^N_M$ the restriction of the process $X^N$ to $I_M\cup J_M$. 
		
		\begin{proposition}\label{prop_4.1} Given $M\ge 1$ such that $\ell\in I_M$,
			we have for all $t>0$ that
			\begin{equation*}
				\lim_{N\to\infty}\frac{1}{c_Nt}\int_{0}^{c_Nt}\mathbb{1}_{\{X_M^N(s)=\ell\}}ds = \frac{\gamma(\ell)}{\sum_{m=1}^{M}\gamma(i_m)}\, ,
			\end{equation*}
			in probability.
		\end{proposition}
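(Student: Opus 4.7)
The plan is to establish convergence in probability by decomposing the time integral into contributions from successive first-level excursions of $\sigma^N$, then bounding the first and second moments of the resulting sum. Let $0=\tau_0<\tau_1<\tau_2<\ldots$ denote the first-level jump times of $\sigma^N$ and set $R_k:=\sigma_1^N(\tau_{k-1})$, $\Delta_k:=\tau_k-\tau_{k-1}$. Writing $K_N(t)$ for the number of excursions fitting in the interval $[0,c_N t]$ of integration, one has, up to a single boundary term,
\begin{equation*}
\int_0^{c_N t}\mathbb{1}_{\{X_M^N(s)=\ell\}}\,ds=\sum_{k=1}^{K_N(t)}\Delta_k\,\mathbb{1}_{\{R_k=\sigma_1(\ell)\}}.
\end{equation*}

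For the local analysis, conditional on $R_k=\sigma_1(\ell)$, $\sigma_2^N$ performs a random walk on $\mathcal{V}_{N_2}$ with the second-level rates, terminated by an independent first-level clock of rate of order $p\,e^{-\beta\sqrt{aN}\Xi^{(1)}_{\sigma_1(\ell)}}$. By Theorem~\ref{Thm_1} the excursion contains a geometric number of second-level jumps with mean of order $e^{\beta(\beta_*aN+\sqrt{aN}W_\ell)}$; mixing on the hypercube $\mathcal{V}_{N_2}$ gives an expected hit count of $\sigma_2(\ell)$ per excursion equal to the above mean divided by $2^{N_2}$, and each hit has conditional expected sojourn at $\sigma(\ell)$ of order $(N/N_2)\,e^{\beta\sqrt{(1-a)N}\Xi^{(2)}_{\sigma(\ell)}}$. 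Combining these through the identity $\sqrt{aN}\Xi^{(1)}+\sqrt{(1-a)N}\Xi^{(2)}=\sqrt{N}\Xi$, one finds $\mathbb{E}[\Delta_k\mid R_k=\sigma_1(\ell)]$ of order $\bar c_N\gamma_\ell$, independently of $W_\ell$. On the skeleton side, $(R_k)_{k\geq1}$ is, at leading order, a simple random walk on $\mathcal{V}_{N_1}$ (the bias from the current second-level position averages out via mixing), so each $\sigma_1(\ell)$ is visited with asymptotic frequency $1/2^{N_1}$.

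The key dichotomy then is the expected total number of hits at $\sigma_2(\ell)$ over the $\sim K_N(t)/2^{N_1}$ excursions at $\sigma_1(\ell)$: this scales as $e^{\beta\sqrt{aN}(W_\ell-L)}$, diverging for $\ell\in I_M$ (so the sum concentrates around its expectation $\pi_\ell^M c_N t$, with $\pi_\ell^M=\gamma(\ell)/\sum_{m=1}^M\gamma(i_m)$, via a law of large numbers) and vanishing for $\ell\in J_M$ (so with high probability no hit occurs, and the contribution is $0$ in probability despite the non-vanishing expectation). A Chebyshev-type second-moment bound, using weak dependence between well-separated excursions via the Markov property and two-level mixing, upgrades the convergence in expectation to convergence in probability. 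The hardest part will be the per-excursion local analysis uniformly in the starting $\sigma_2^N(\tau_{k-1})$ and in the non-extreme environmental fluctuations $\{\Xi^{(2)}_{\sigma_1(\ell)\sigma_2}:\sigma_2\neq\sigma_2(\ell)\}$, together with control of the heavy-tailed $J_M$-contributions (where a single hit would produce a sojourn far exceeding its relative rarity): one must ensure that, with probability tending to $1$, no such heavy hit occurs during $[0,c_N t]$. The threshold $L$ and the separation of $I_M$ from $J_M$ emerge precisely from this hitting-probability analysis, which has no direct counterpart in the cascading dynamics of~\cite{fg2018}.
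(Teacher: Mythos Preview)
Your high-level strategy---a law of large numbers for accumulated sojourns, second-moment control via hypercube mixing, and the no-hit dichotomy for $J_M$---is correct and matches the paper's. The decomposition, however, differs and needs care. You partition by \emph{single} first-level jumps, whereas the paper partitions by successive returns of $J^{1,N}$ to the fixed state $\sigma_1(i_1)$, so that each block $R_k^N=\sum_m(F_{m,k}^N+Q_{m,k}^N)$ contains (in expectation) exactly one visit to each $\sigma_1(i_m)$ and $\sigma_1(j_m)$, by~\eqref{nor}. This coarser blocking buys two things: the blocks $(R_k^N)_{k\ge2}$ are identically distributed, and each visit to $\sigma_1(i_m)$ is long enough (a geometric number $\gg N^3$ of second-level steps) that one can couple $J^{2,N}$ to the uniform after $N^3$ steps inside each visit, producing genuinely i.i.d.\ surrogates $\breve F^{2,N}_{m,k}$ to which Chebyshev applies directly (Lemma~\ref{lemma_4.1.2}). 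With your single-jump blocks, consecutive excursions share the second-level starting state, so the ``weak dependence between well-separated excursions'' you invoke is not automatic; you would need to insert the same coupling, but the natural place for it is within a visit to a low-lying first-level state, which effectively recovers the paper's blocking.

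Two further points. First, $X_M^N$ is the \emph{time-restriction} of $X^N$ to $I_M\cup J_M$: the $X_M^N$-clock advances only while $X^N\in I_M\cup J_M$. Hence the vast majority of your first-level excursions have $R_k\notin\{\sigma_1(i):i\in I_M\cup J_M\}$ and do not advance the $X_M^N$-clock at all; $K_N(t)$ must be the first index at which the accumulated time over the \emph{relevant} excursions reaches $c_Nt$, not the number of excursions in real time $[0,c_Nt]$, so ``up to a single boundary term'' understates the discrepancy in your displayed identity. Second, for $\ell\in J_M$ your computation tracks only sojourns at $\sigma_2(\ell)$, but $\Delta_k\,\mathbb{1}_{\{R_k=\sigma_1(\ell)\}}$ (equivalently $X_M^N=\ell$) includes time at all $(\sigma_1(\ell),\sigma_2)$ with $\sigma_2\ne\sigma_2(\ell)$---and this is precisely where the ``non-vanishing expectation'' you mention lives. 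The paper handles this by writing $Q^N_{m,k}=Q^N_{m,k}\widehat B^N_k+Q^N_{m,k}(1-\widehat B^N_k)$, killing the no-hit part~\eqref{eq_4.1.6} in expectation via~\eqref{convergence_exp}, and then showing $b_N\,\mathbb{P}_1(\hat g_1>\vartheta_1)\to0$ via Kemperman's formula~\eqref{kemp}--\eqref{kempf} for the Laplace transform of the hitting time on the hypercube; you correctly flag this last step as having no cascading analogue, but the no-hit piece must also be dispatched.
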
	
		
		\begin{proposition} \label{prop_4.2} Given $t> 0$, let $T_M^{N,out}(t)$ be the time spent by $X^N$ outside $I_M\cup J_M$ up to time $c_Nt$. Then for any $\lambda >0$ we have
			\begin{equation*}
				\lim_{M\to\infty}\limsup_{N\to\infty}\mathbb{P}\left[\frac{1}{c_Nt}T_M^{N,out}(t)>\lambda\right]= 0\, .
			\end{equation*}
		\end{proposition}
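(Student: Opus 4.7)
The natural approach is to apply Markov's inequality, which reduces the claim to showing that $\mathbb{E}[T_M^{N,out}(t)]/c_N$ vanishes in the iterated limit $M\to\infty$ after $N\to\infty$. I would decompose the outside time into three contributions: time spent at non-low-lying (shallow) configurations, time at low-lying configurations indexed by $I\setminus I_M$, and time at those indexed by $J\setminus J_M$. Each contribution requires a distinct argument.

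For the shallow contribution, I would rely on a standard trap-model estimate: at a shallow configuration $\sigma$ both factors in the rate~\eqref{dynamics} are only polynomially small, so the holding time there is at most polynomial in $N$, while at a low-lying configuration the holding time is exponentially large in $N$ and dominates the excursion time in the bulk (itself of order $2^{N}$ for a simple random walk on the hypercube, which, under the regime $\beta_*<\beta<\bar\beta_{FT}$, is much smaller than $c_N$). Combining these ratios gives a bound on the expected shallow time of $o(c_N)$ as $N\to\infty$, uniformly in $M$.

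For the $I\setminus I_M$ contribution, I would show that $\mathbb{E}[T_\ell(t)]/c_N\le C\gamma_\ell$ uniformly in $\ell\in I$, by arguments parallel to those supporting Proposition~\ref{prop_4.1}: the mean time per visit to $\ell$ is of order $\gamma^N_\ell$, while the number of visits up to time $c_Nt$ is of order $c_Nt/\sum_{i\in I}\gamma^N_i$, so that $\mathbb{E}[T_\ell(t)]$ is at most of order $c_Nt\,\gamma_\ell/\sum_{i\in I}\gamma_i$. Summing over $\ell\in I\setminus I_M$ yields a bound of order $\sum_{\ell\in I\setminus I_M}\gamma_\ell$, which goes to $0$ as $M\to\infty$ thanks to the a.s.\ summability $\sum_{i\ge 1}\gamma_i<\infty$.

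The main obstacle is the $J\setminus J_M$ contribution. Since $c_N$ is much smaller than the holding time at any single $\ell\in J$, I would bound the time there by $c_Nt\cdot\mathbb{1}\{X^N\text{ visits some }\ell\in J\setminus J_M\text{ in }[0,c_Nt]\}$ and then bound the probability on the right by a union bound plus an estimate of the form $\mathbb{P}[X^N\text{ hits }\ell\text{ before time }c_Nt]\le C\gamma_\ell/\sum_{i\in I}\gamma_i$ for each $\ell\in J$. Heuristically this follows because, on a time scale much shorter than $c_N$, the process approximately equilibrates among $I$-configurations (cf.~Proposition~\ref{prop_4.1}), so the rate at which it discovers a new, deeper configuration $\ell$ is governed by its relative Gibbs weight $\gamma_\ell/\sum_{i\in I}\gamma_i$; the tail $\sum_{\ell\in J\setminus J_M}\gamma_\ell$ then tends to $0$ as $M\to\infty$. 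Making this hitting-probability estimate rigorous is the delicate point: I expect to proceed via a time-change/trace argument, comparing $X^N$ to an auxiliary dynamics in which the $J\setminus J_M$ configurations are excised or made reflecting, and controlling the discrepancy by Dirichlet-form / capacity estimates of the kind standard in random-walk-with-traps analyses.
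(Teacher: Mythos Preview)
Your proposal correctly starts with Markov's inequality but then overcomplicates matters and misidentifies where the difficulty lies.

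The paper's proof is simpler and unified. It slices time by \emph{excursions} of $J^{1,N}$ from $\sigma_1(i_1)$: writing $O^N_k$ for the time $X^N$ spends outside $I_M\cup J_M$ during the $k$-th such excursion, one has $T^{N,out}_M(1)\le O^N_0+\sum_{k=1}^{L^N(c_N)+1}O^N_k$, with $L^N$ the excursion counter of Lemma~\ref{lemma_L^N}. The same invariance-of-$\pi_2$ computation as in~\eqref{expectationF_m}--\eqref{expectationRN} gives, for $k\ge1$,
\[
\mathbb{E}_1[O^N_k]=\frac{N}{N_1\, 2^{N_2}}\sum_{i\notin I_M\cup J_M}\ \sum_{\sigma_2\in\mathcal V_{N_2}}e^{\beta\sqrt{N}\,\Xi_{\sigma_1(i)\sigma_2}},
\]
so that $\mathbb{E}_1[O^N_k]/\mathbb{E}_1[R^N_1]\to\sum_{i\notin I_M\cup J_M}\gamma(i)\big/\sum_{m}[\gamma(i_m)+\gamma(j_m)]$. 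Lemma~\ref{lemma_L^N} bounds the number of excursions by a constant times $c_N/\mathbb{E}_1[R^N_1]$, and one more Markov inequality finishes: the resulting bound is a constant times $\sum_{i\notin I_M\cup J_M}\gamma(i)$, a tail of an a.s.\ convergent series, hence $\to0$ as $M\to\infty$. No case distinction is needed; ``shallow'' sites, $I\setminus I_M$, and $J\setminus J_M$ are all absorbed into the single sum $\sum_{i\notin I_M\cup J_M}\gamma(i)$.

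Your claim that $J\setminus J_M$ is the ``main obstacle'' requiring hitting-probability or Dirichlet-form estimates rests on a misreading of the dynamics in the regime $\beta<\bar\beta_{FT}$. On a \emph{typical} visit of $J^{1,N}$ to $\sigma_1(\ell)$ with $\ell\in J$, the process leaves before ever finding the deep match $\sigma_2(\ell)$ --- this is exactly what is established in~\eqref{est1} in the proof of Lemma~\ref{lemma_4.1.2} --- so the holding time there is small, not large. The rare visits on which $\sigma_2(\ell)$ \emph{is} found are long, but in expectation the two effects balance to give a per-excursion contribution of order $\gamma^N(\ell)$, identical in form to the $I$-sites. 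Hence the first-moment bound above already suffices; the capacity machinery you propose is unnecessary.
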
	
		
		\begin{proof}[Proof of Theorem \ref{Thm_above_FT}] For $M>0$ fixed and $\ell\in I_M$, let us write
			\begin{align}\label{eq_4.1}
				\int_{0}^{c_Nt}\mathbb{1}_{\{X^N(s)=\ell\}}ds &= \int_{0}^{c_Nt-T_M^{N,out}(t)}\mathbb{1}_{\{X_M^N(s)=\ell\}}ds\nonumber\\
				&= \int_{0}^{c_Nt}\mathbb{1}_{\{X_M^N(s)=\ell\}}ds-\int_{c_Nt-T_M^{N,out}(t)}^{c_Nt}\mathbb{1}_{\{X_M^N(s)=\ell\}}ds\, .
			\end{align}
			In view of 
			\begin{equation*}
				\lim_{M\to\infty} \sum_{m=1}^{M}\gamma(i_m)=\sum_{i: W_i>L}\gamma(i)\, ,
			\end{equation*}
			given any $\lambda$ positive, we can find some $M_0$ such that for all $M>M_0$ we get
			\begin{align*}
				\mathbb{P}&\left[\left|\frac{1}{c_Nt}\int_{0}^{c_Nt}\mathbb{1}_{\{X^N(s)=\ell\}}ds - \frac{\gamma(\ell)}{\sum_{i: W_i>L}\gamma(i)}\right|>\lambda\right]\\
				&\le  \mathbb{P}\left[\left|\frac{1}{c_Nt}\int_{0}^{c_Nt}\mathbb{1}_{\{X^N(s)=\ell\}}ds - \frac{\gamma(\ell)}{\sum_{m=1}^{M}\gamma(i_m)}\right|>\frac{\lambda}{2}\right]\, .
			\end{align*}	
			By \eqref{eq_4.1}, the probability in the right hand side above has the upper bound
			\begin{equation*}
				\mathbb{P}\left[\left|\frac{1}{c_Nt}\int_{0}^{c_Nt}\mathbb{1}_{\{X_M^N(s)=\ell\}}ds - \frac{\gamma(\ell)}{\sum_{m=1 }^{M}\gamma(i_m)}\right|>\frac{\lambda}{4}\right]  + \mathbb{P}\left[\frac{1}{c_Nt}T_M^{N,out}(t)>\frac{\lambda}{4}\right]\, .
			\end{equation*}
			Taking $\limsup$ as $N\to\infty$ and then $M\to\infty$, using Proposition \ref{prop_4.1} and \ref{prop_4.2} we obtain Theorem \ref{Thm_above_FT}.
		\end{proof}	
		
		As will become clear in the proof, it will be sufficient to show Proposition \ref{prop_4.1} and \ref{prop_4.2} for $t=1$.
		
		\subsection{Proof of Proposition \ref{prop_4.1}}\label{sub_seq_4.1}
		
		Let us denote by $\left\{J^{1,N}(j), j\geq 0\right\}$ and $\left\{J^{2,N}(j), j\geq 0\right\}$ two independent, discrete time, simple, symmetric random walks evolving in $\mathcal{V}_{N_1}$ and $\mathcal{V}_{N_2}$, respectively. For any given probability measure $\mu_i$ defined in $\mathcal{V}_{N_i}$, we set
		$$\mathbb{P}_{\mu_i}[J^{i,N}(0)=\sigma_i]=\mu_i(\sigma_i).$$
		Let us denote by  $\pi_1$ and $\pi_2$ the uniform distribution in $\mathcal{V}_{N_1}$ and $\mathcal{V}_{N_2}$, respectively. We will assume that $J^{1,N}$ starts from $\pi_1$ and $J^{2,N}$ starts from $\pi_2$. Let us consider a third probability space $(\Omega,\mathcal{F},\widetilde{\mathbb{P}})$ in which a family of i.i.d. mean one exponential random variables $\left\{T_j,j\geq 0\right\}$ is defined. We will describe the evolution of $\sigma^N(t)$ through the product probability $\mathbb{P}=\mathbb{P}_{\mu_1}\times\mathbb{P}_{\mu_2}\times\widetilde{\mathbb{P}}$. 
		
		For the sake of simplicity we will denote 
		$$\mathbb{P}_{\mu_1\times\mu_2}=\mathbb{P}_{\mu_1}\times\mathbb{P}_{\mu_2}\times\widetilde{\mathbb{P}}.$$
		
		In some steps of the proof, we will be dealing with situations in which $J^{1,N}$ starts from certain configuration $\sigma_1(i_m)$ and $J^{2,N}$ starts from the uniform distribution $\pi_2$, in which case, we adopt the abbreviated notation 
		\begin{equation}\label{def_prob}
			\mathbb{P}_m=\mathbb{P}_{\delta_{\sigma_1(i_m)}\times\pi_2},
		\end{equation}
		where $\delta_{\sigma_1}$ denotes the Dirac measure in $\mathcal{V}_{N_1}$ concentrated in $\sigma_1$. Also, we will replace the notation $\mathbb{P}_{\delta_{\sigma_1}}$ by  $\mathbb{P}_{\sigma_1}$.

		Recalling the transition rate defined in \eqref{dynamics}, the first component of $\sigma^N$ changes after a geometric number of jumps in the second component, and on each one of those jumps, the amount of time that the process spends has exponential distribution. So, when $X^N$ arrives at some state $i$, the time spent until it decides to jump is distributed as a sum of these exponential random variables:  
		\begin{equation}\label{eq_4.1.1}
			H^N(i)=\sum_{j=0}^{G^N(i)-1}\frac{N}{N_2}\frac{e^{\beta\sqrt{(1-a)N}\Xi_{\sigma_1(i)J^{2,N}(j)}^{(2)}}}{1+\frac{N_1}{N_2}e^{-\beta\sqrt{aN}\Xi_{\sigma_1(i)}^{(1)}}}T_j=	\sum_{j=0}^{G^N(i)-1}\frac{N}{N_1}\frac{e^{\beta\sqrt{N}\Xi_{\sigma_1(i)J^{2,N}(j)}}}{\mu_N(i)}T_j\, ,
		\end{equation}
		where $G^N(i)=G^N(\sigma_1(i))$ is a Geometric random variable with mean $\mu_N(i)$, defined as 
		\begin{equation}\label{def_mu}
			\mu_N(i)=1+\frac{N_2}{N_1}e^{\beta\sqrt{aN}\Xi_{\sigma_1(i)}^{(1)}}.
		\end{equation} 
		
		Recall the conditions (a) and (b) above Proposition \ref{prop_4.1}. In view of the independence and continuity of the distribution of the variables $W_i,\,i\geq1,$
		$$
		\sigma_1(i_1),\dots,\sigma_1(i_M),\sigma_1(j_1),\dots,\sigma_1(i_M),
		$$
		are well defined and distinct almost surely. Keeping this in mind, let us denote by $H^N_{m,\ell}$ the time spent by $X^N$ at site $i_m\in I_M$ in its $\ell^{th}$ visit. Note that, for each $m=1,\dots,M$, the variables $\{H^N_{m,\ell}; \ell\ge 1\}$ have the same distribution as $H^N(i_m)$, defined in \eqref{eq_4.1.1}; however, they are not independent, indeed they depend on the position of $J^{2,N}$ in each arrival to $i_m$, and these positions are not independent.

		Given a simple, symmetric random walk $J^{1,N}$ on $\mathcal{V}_{N_1}$, 
		let us consider the sequence of  times when $J^{1,N}$ visits $\sigma_1(i_1)$
		\begin{equation}\label{tau}
			\tau^{k}:=\inf\{n>\tau^{k-1}: J^{1,N}(n)=\sigma_1(i_1)\},\, k\geq1,
		\end{equation}
		with $\tau^{0}=-1$, and for $1\le i\le 2^N$,
		{\begin{equation}\label{S}
				S^k_i=\sum_{n=0}^{\tau^k}\mathbb{1}_{\{J^{1,N}(n)=\sigma_1(i)\}},
			\end{equation}
			the number of visits of $J^{1,N}$ to $\sigma_1(i)$ up to $\tau^k$.}
		
		Let us now consider the sequence of times spent by{ 
			$X^N$ on $i_m$ between returns to $i_1$}, that is, for $k\geq1$, set
		\begin{equation}\label{F_m}
			F^N_{m,k}:=\sum_{\ell=S^{k-1}_{i_m}+1}^{S^k_{i_m}}H^N_{m,\ell}\, .
		\end{equation}
		Similarly, let $\widehat{H}^N_{m,\ell}$ be the time spent by $X^N$ at site $j_m\in J_M$ in its $\ell^{th}$ visit, and
		\begin{equation}\label{Q_m}
			Q^N_{m,k}:=\sum_{\ell=S_{j_m}^{k-1}+1}^{S^k_{j_m}}\widehat{H}^N_{m,\ell} \, ,
		\end{equation}
		is the time spent by{
			$X^N$ on $j_m$ between returns to $i_1$}.
		
		
		Finally, let us consider the sequence of times that $X^N$ spends in all sites $i_m$ and $j_m$, for  $m=1,\dots,M$, between returns to $i_1$:
		\begin{equation}\label{R^N}
			R^N_k:=\sum_{m=1}^M\left[F^N_{m,k}+Q^N_{m,k}\right],\, k\geq1.
		\end{equation}
		
		We note that for every $m$, the random variables $F^N_{m,k}$, $k\geq2$, are identically distributed among themselves, and identically distributed to $F^N_{m,1}$ under ${\mathbb P}_1$.
		The same holds for $Q^N_{m,k}$, $k\geq1$, and $R^N_{k}$, $k\geq1$.

		For the next lemma, let us define $\mathbb{E}_m$ the expectation with respect to the probability $\mathbb{P}_m$ defined in \eqref{def_prob}.
		\begin{lemma} \label{lemma_4.1.1} For $m=1,\dots,M$, we have
			\begin{equation*}
				\lim_{N\to\infty}\frac{\mathbb{E}_1\left[F^N_{m,1}\right]}{\mathbb{E}_1\left[R^N_1\right]}= \frac{\gamma(i_m)}{\sum_{m=1}^{M}\left[\gamma(i_m)+\gamma(j_m)\right]}\, .
			\end{equation*}
		\end{lemma}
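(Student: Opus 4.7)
The plan is to evaluate both $\mathbb{E}_1[F^N_{m,1}]$ and $\mathbb{E}_1[R^N_1]$ explicitly up to vanishing multiplicative error; both will be seen to share a common prefactor that cancels in the ratio. The first step is a Wald-type decoupling of the level-one counting from the level-two sojourn: since $J^{1,N}$, $J^{2,N}$ and $\{T_j\}$ are independent, and since $\pi_2$ is invariant under $J^{2,N}$, each time $X^N$ arrives at $i_m$ the second-level walk sits at a marginally uniform position on $\mathcal V_{N_2}$. Conditioning on the whole trajectory of $J^{1,N}$ (which fixes $\tau^1$ and the number of visits $S^1_{i_m}-S^0_{i_m}$) and taking the expectation of each $H^N_{m,\ell}$ separately, I would obtain
$$
\mathbb{E}_1[F^N_{m,1}] \;=\; \mathbb{E}_1\!\left[S^1_{i_m}-S^0_{i_m}\right]\cdot \mathbb{E}_{\pi_2}[H^N(i_m)],
$$
and the analogous identity for $\mathbb{E}_1[Q^N_{m,1}]$ with $j_m$ in place of $i_m$.

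By Kac's cycle formula applied to the simple symmetric walk on $\mathcal V_{N_1}$, whose stationary distribution $\pi_1$ is uniform, the expected number of visits of $J^{1,N}$ to $\sigma_1(i_m)$ during an excursion from $\sigma_1(i_1)$ back to itself equals $\pi_1(\sigma_1(i_m))/\pi_1(\sigma_1(i_1))=1$; a direct check of the bookkeeping in~\eqref{tau}--\eqref{S} confirms that $\mathbb{E}_1[S^1_{i_m}-S^0_{i_m}]=1$ for every $m$, regardless of whether $m=1$ or not (and the same holds with $j_m$ in place of $i_m$). For the sojourn factor, I would insert the explicit form of $H^N(i_m)$ from~\eqref{eq_4.1.1}, use $\mathbb{E}[T_j]=1$ and $\mathbb{E}[G^N(i_m)]=\mu_N(i_m)$, and average the exponentials over the marginally uniform $J^{2,N}(j)$. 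After the cancellation between $\mu_N(i_m)$ and its reciprocal, this yields
$$
\mathbb{E}_{\pi_2}[H^N(i_m)] \;=\; \frac{N}{N_1\cdot 2^{N_2}}\sum_{\sigma_2\in\mathcal V_{N_2}} e^{\beta\sqrt{N}\,\Xi_{\sigma_1(i_m)\sigma_2}}.
$$

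Finally, the algebraic identity $e^{\beta\sqrt{N}\,\Xi_{\sigma_1\sigma_2}}=e^{\beta\beta_* N-\frac{\beta(\log N+\kappa)}{2\beta_*}}\gamma^N(\sigma_1\sigma_2)$, which follows directly from~\eqref{def2} and~\eqref{gamma}, rewrites the previous display as $\frac{N}{N_1}\bar c_N\sum_{\sigma_2}\gamma^N(\sigma_1(i_m)\sigma_2)$, with $\bar c_N$ as in~\eqref{bar_c_N}. In the Skorohod representation assumed at the beginning of this section, the convergence~\eqref{convergence_exp} holds almost surely, so $\sum_{\sigma_2}\gamma^N(\sigma_1(i_m)\sigma_2)\to\gamma(i_m)$ (and likewise with $j_m$), while $N/N_1\to 1/p$. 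Summing over $m$,
$$
\mathbb{E}_1[F^N_{m,1}]\sim \frac{\bar c_N}{p}\,\gamma(i_m),\qquad \mathbb{E}_1[R^N_1]\sim \frac{\bar c_N}{p}\sum_{m=1}^{M}\!\left[\gamma(i_m)+\gamma(j_m)\right],
$$
and the common factor $\bar c_N/p$ cancels in the ratio, yielding the claimed limit. I expect no serious obstacle in this first-moment argument; the only point needing care is the rigorous justification of the Wald-type decoupling in the first step, which rests on the independence of $J^{1,N}$ and $J^{2,N}$ and on the invariance of $\pi_2$ under $J^{2,N}$.
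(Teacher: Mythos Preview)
Your proposal is correct and follows essentially the same approach as the paper: both condition on the first-level walk $J^{1,N}$ to decouple the visit count from the per-visit sojourn, use the Kac-type identity $\mathbb{E}_1[S^1_{i_m}]=1$ (the paper cites (7.17) of \cite{fg2018} rather than naming Kac), compute $\mathbb{E}_{\pi_2}[H^N(i_m)]$ via invariance of $\pi_2$ to get the formula $\frac{N}{N_1 2^{N_2}}\sum_{\sigma_2}e^{\beta\sqrt{N}\Xi_{\sigma_1(i_m)\sigma_2}}$, and conclude by the a.s.\ version of~\eqref{convergence_exp}. Your extra step of rewriting via $\bar c_N$ is cosmetic; the paper simply takes the ratio directly.
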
	
		
		\medskip
		
		\begin{proof}
			For $\ell\geq1$, let us define 
			\begin{equation}\label{gls}
				\mathcal{G}_\ell=\sum_{n=0}^{\tau_\ell(m)-1}G_n^N(J^{1,N}(n)),\,\,\, g_\ell=G_{\tau_\ell(m)}^N(i_m),
			\end{equation}
			where $\tau_\ell(m)$ is the number of jumps that $J^{1,N}$ executes up to its $\ell^{th}$ visit to $\sigma_1(i_m)$ and $\{ G^N_n(i): n\ge 0,\,i=1,\ldots,2^{N_1}\}$ is a sequence of independent, Geometric random variables with mean $\mu_N(i)$, respectively, defined in \eqref{def_mu}\footnote{If $\tau_1(m)=0$, then $\mathcal{G}_1=-1$.}. 
			By \eqref{eq_4.1.1}, we can write
			\begin{equation}\label{hml}
				H^N_{m,\ell}\overset{d}{=}\frac{N}{N_1\mu_N(i_m)}\sum_{j=1}^{g_\ell}
				e^{\beta\sqrt{N}\Xi_{\sigma_1(i_m)J^{2,N}(\mathcal{G}_\ell+j)}}T_{\mathcal{G}_\ell+j}\, ,
			\end{equation}
			
			Conditioning on  $J^{1,N}=\left\{J^{1,N}(n) : n\geq 1 \right\}$ and 
			$\mathcal{G}=\left\{\mathcal{G}_\ell : \ell\geq 1 \right\}$, by \eqref{F_m}, we have that 
			\begin{equation*}
				\mathbb{E}_1\left[F^N_{m,1}\right]=\frac{N}{N_1\mu_N(i_m)}
				\mathbb{E}_1\left[\sum_{\ell=1}^{S^1_{i_m}}\sum_{j=1}^{g_\ell}
				\mathbb{E}_1\left[e^{\beta\sqrt{N}\Xi_{\sigma_1(i_m)J^{2,N}(\mathcal{G}_\ell+j)}}
				T_{\mathcal{G}_\ell+j}\Big| J^{1,N},\mathcal{G}\right]\right]\, .
			\end{equation*}
			Recall that the random walks $J^{1,N}$ and $J^{2,N}$ are independent from each other and are also independent from $\mathcal{G}$. Then  
			\begin{align*}
				\mathbb{E}_1\left[e^{\beta\sqrt{N}\Xi_{\sigma_1(i_m)J^{2,N}(\mathcal{G}_\ell+j)}}
				T_{\mathcal{G}_\ell+j}\Big|J^{1,N},\mathcal{G}\right]&=\sum_{\sigma_2\in\mathcal{V}_{N_2}}
				e^{\beta\sqrt{N}\Xi_{\sigma_1(i_m)\sigma_2}}
				\mathbb{P}_{\pi_2}\left[J^{2,N}(\mathcal{G}_\ell+j)=\sigma_2\right]\\
				&=\frac{1}{2^{N_2}}\sum_{\sigma_2\in\mathcal{V}_{N_2}}e^{\beta\sqrt{N}\Xi_{\sigma_1(i_m)\sigma_2}}\, .
			\end{align*}
			It is known  from elementary theory of Markov chains  (see e.g.~(7.17) in~\cite{fg2018})
			\begin{equation}\label{nor}
				\mathbb{E}_{1}\left[S^1_{i_m}\right]~=~1. 
			\end{equation}
			Hence, 
			\begin{equation}\label{expectationF_m}
				\mathbb{E}_1\left[F^N_{m,1}\right]=
				\frac{N}{N_1}\frac{\widetilde{\mathbb{E}}\left[g_1\right]}{\mu_N(i_m)}\frac{1}{2^{N_2}}
				\sum_{\sigma_2\in\mathcal{V}_{N_2}}e^{\beta\sqrt{N}\Xi_{\sigma_1(i_m)\sigma_2}}=
				\frac{N}{N_1}\frac{1}{2^{N_2}}\sum_{\sigma_2\in\mathcal{V}_{N_2}}e^{\beta\sqrt{N}\Xi_{\sigma_1(i_m)\sigma_2}} \, .
			\end{equation}
			Analogously, we get
			\begin{equation}\label{expectationQ_m}
				\mathbb{E}_1\left[Q^N_{m,1}\right] = \frac{N}{N_1}\frac{1}{2^{N_2}}\sum_{\sigma_2\in\mathcal{V}_{N_2}}e^{\beta\sqrt{N}\Xi_{\sigma_1(j_m)\sigma_2}} \, .
			\end{equation}
			Hence 
			\begin{equation}\label{expectationRN}
				\mathbb{E}_1\left[R^N_1\right]=\frac{N}{N_1}\frac{1}{2^{N_2}}
				\sum_{m=1}^{M}\left[\sum_{\sigma_2\in\mathcal{V}_{N_2}}e^{\beta\sqrt{N}\Xi_{\sigma_1(i_m)\sigma_2}}	+\sum_{\sigma_2\in\mathcal{V}_{N_2}}e^{\beta\sqrt{N}\Xi_{\sigma_1(j_m)\sigma_2}}\right]\, .
			\end{equation}
			Finally, using the normalization $u_N^{-1}$, and recalling the definition \eqref{gamma}, we have
			\begin{align}\label{tch}
				\lim_{N\to\infty}\frac{\mathbb{E}_1\left[F^N_{m,1}\right]}{\mathbb{E}_1\left[R^N_1\right]}&=\lim_{N\to\infty}\frac{\sum_{\sigma_2\in\mathcal{V}_{N_2}}\gamma^N(\sigma_1(i_m)\sigma_2)}{\sum_{m=1}^M\sum_{\sigma_2\in\mathcal{V}_{N_2}}\left[\gamma^N(\sigma_1(i_m)\sigma_2)+\gamma^N(\sigma_1(j_m)\sigma_2)\right]}\nonumber\\
				&= \frac{\gamma(i_m)}{\sum_{m=1}^{M}\left[\gamma(i_m)+\gamma(j_m)\right]}\, \, ,
			\end{align}
			where we have used \eqref{convergence_exp} (in the strong form mentioned at the beginning of the section) in the past passage.
		\end{proof}

		%
		%

		Let $R^N_0$ be the time spent by $X^N_M$ until its first time out of $i_1$. Notice that $R^N_k$ is the time that $X^N_M$ spends between the $(k-1)^{st}$ and $k^{th}$ visit to $i_1$, $k\geq 1$. 
		
		
		\begin{lemma}\label{lemma_4.1.2} Given $\delta>0$ let us define 
			\begin{equation}\label{b_N}
				b_N:=\left\lfloor  \frac{\delta c_N}{\mathbb{E}_1\left[R^N_1\right]} \right\rfloor\, .
			\end{equation}
			Then, for any fixed $m=1,\dots,M$, we have
			\begin{equation}\label{eq_4.1.3}
				\lim_{N\to\infty}\frac{1}{b_N}\sum_{k=1}^{b_N} \frac{F^N_{m,k}}{\mathbb{E}_1[R^N_1]} = \frac{\gamma(i_m)}{\sum_{m=1}^{M}\left[\gamma(i_m)+\gamma(j_m)\right]}\, 
			\end{equation}
			and
			\begin{equation}\label{eq_4.1.4}
				\lim_{N\to\infty}\frac{1}{b_N}\sum_{k=1}^{b_N} \frac{Q^N_{m,k}}{\mathbb{E}_1[R^N_1]} = 0\, .
			\end{equation}
			Both convergences above hold in probability.
		\end{lemma}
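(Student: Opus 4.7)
The plan is to prove \eqref{eq_4.1.3} and \eqref{eq_4.1.4} by a Chebyshev-type and a Markov-type argument, respectively. The two parts are separated by a sharp dichotomy in the expected total number of ``matches''---occurrences within the first $b_N$ excursions of $J^{1,N}$ away from $\sigma_1(i_1)$ in which $J^{1,N}$ sits on $\sigma_1(k)$ while $J^{2,N}$ is simultaneously at the matched partner $\sigma_2(k)$. A direct computation combining the expression \eqref{b_N} for $b_N$, the expression for $\mathbb{E}_1[R^N_1]$ from \eqref{expectationRN}, and $\mu_N(k) \sim (N_2/N_1)\, e^{\beta\beta_* a N + \beta\sqrt{aN}\, W_k}$, yields that this expected total equals $b_N\mu_N(k)/2^{N_2}$, which up to polynomial prefactors is $e^{\beta\sqrt{aN}(W_k - L)}$. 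It diverges for $k = i_m$ (since $W_{i_m} > L$) and vanishes for $k = j_m$ (since $W_{j_m} < L$), and this dichotomy drives the two arguments.

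For \eqref{eq_4.1.3}, I would telescope $\sum_{k=1}^{b_N} F^N_{m,k} = \sum_{\ell=1}^{S^{b_N}_{i_m}} H^N_{m,\ell}$, noting that $\mathbb{E}_1[S^{b_N}_{i_m}] = b_N$ with $\mathrm{Var}_1(S^{b_N}_{i_m}) = O(b_N)$ by i.i.d.\ excursion decomposition of $J^{1,N}$ and \eqref{nor}. The main task is to estimate $\mathrm{Var}_1(\sum_\ell H^N_{m,\ell})$. Expanding $(H^N_{m,\ell})^2$ as a double sum over $(j,j')$ with $j=j'$ and $j\neq j'$ pieces, using the non-cascading dominance of the matched $\sigma_2(i_m)$ in $\sum_{\sigma_2}e^{\beta\sqrt{N}\,\Xi_{\sigma_1(i_m)\sigma_2}}$, and decoupling $J^{2,N}$'s positions at separated times via its mixing on $\mathcal{V}_{N_2}$ (mixing time $O(N_2 \log N_2)$, much shorter than the typical inter-visit gap $\sim 2^{N_1}$), one finds that $\mathrm{Var}_1(H^N_{m,\ell})/\mathbb{E}_1[H^N_{m,\ell}]^2$ is of order $2^{N_2}/\mu_N(i_m)$ and that cross-visit covariances are negligible. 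Hence $\mathrm{Var}_1\bigl(\sum_k F^N_{m,k}\bigr)/\mathbb{E}_1\bigl[\sum_k F^N_{m,k}\bigr]^2$ is of order $2^{N_2}/(b_N\mu_N(i_m)) \to 0$, and Chebyshev combined with Lemma~\ref{lemma_4.1.1} yields \eqref{eq_4.1.3}.

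For \eqref{eq_4.1.4}, let $A_N$ be the event that $J^{2,N}$ is at $\sigma_2(j_m)$ on at least one step while $J^{1,N}$ sits on $\sigma_1(j_m)$, within the first $b_N$ excursions. A direct first-moment bound gives $\mathbb{P}_1[A_N] \le b_N\mu_N(j_m)/2^{N_2} \to 0$. On $A_N^c$, only non-matching $\sigma_2 \neq \sigma_2(j_m)$ contribute to $\sum_{k=1}^{b_N} Q^N_{m,k}$. Using $\gamma^N(\sigma(j_m)) \to \gamma(j_m)$ together with \eqref{convergence_exp}, the non-matching sum $\sum_{\sigma_2 \neq \sigma_2(j_m)} e^{\beta\sqrt{N}\,\Xi_{\sigma_1(j_m)\sigma_2}}$ is $o\bigl(e^{\beta(\beta_*N - (\log N+\kappa)/(2\beta_*))}\bigr)$, so that the analogue of \eqref{expectationQ_m} restricted to non-matching $\sigma_2$ yields $\mathbb{E}_1\bigl[\sum_k Q^N_{m,k}\,\mathbb{1}_{A_N^c}\bigr] = o\bigl(b_N \mathbb{E}_1[R^N_1]\bigr)$. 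Markov's inequality together with $\mathbb{P}_1[A_N] \to 0$ then delivers \eqref{eq_4.1.4}.

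The main obstacle is the quantitative decoupling of $J^{2,N}$'s positions at the many relevant times arising in the variance computation for \eqref{eq_4.1.3}, needed for both within-visit pairs $(j,j')$ in \eqref{hml} (with $|j-j'|$ exceeding the mixing scale) and across distinct visits of $J^{1,N}$ to $\sigma_1(i_m)$. The favourable separation between the mixing time of $J^{2,N}$ and the relevant time-gaps makes this feasible, but managing it uniformly over all pairs, including the small subset of close-$j$ pairs within a visit, is the chief technical task.
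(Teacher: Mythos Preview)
Your proposal is correct and follows the same overall strategy as the paper: a second-moment/Chebyshev argument for \eqref{eq_4.1.3} and a match/no-match split with a first-moment bound on the no-match piece for \eqref{eq_4.1.4}. The technical implementations differ in two places. For \eqref{eq_4.1.3}, rather than bounding cross-$k$ covariances directly via mixing as you propose, the paper discards the first $N^3$ steps of $J^{2,N}$ within each visit of $J^{1,N}$ to $\sigma_1(i_m)$ (negligible since $N^3/\mu_N(i_m)\to 0$) and then, using the mixing bound \eqref{BG}, couples $J^{2,N}$ at that step to an independent uniform, producing a genuinely i.i.d.\ family $\breve F^{2,N}_{m,k}$ to which Chebyshev is applied; the remaining second-moment computation of $\mathbb{E}_{\pi_2}[(H^N_{m,1})^2]$ is exactly the diagonal/off-diagonal split you outline, and both pieces carry the factor $e^{-\beta\sqrt{aN}(W_{i_m}-L+o(1))}$, matching your order estimate $2^{N_2}/(b_N\mu_N(i_m))$. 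For \eqref{eq_4.1.4}, your union bound $\mathbb{P}_1[A_N]\le b_N\,\mu_N(j_m)/2^{N_2}$ is simpler than the paper's route, which instead estimates the per-visit matching probability $\mathbb{P}_1[\hat g_1>\vartheta_1]$ via Kemperman's formula for the generating function of the hitting time of $J^{2,N}$; in the present regime $\beta<\bar\beta_{FT}$ both approaches yield the same order $e^{\beta\sqrt{aN}(W_{j_m}-L)}\to 0$, and the non-matching contribution is handled identically.
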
	
		
		
		\begin{proof}		
			We start with \eqref{eq_4.1.3}. Notice that the variables $\{F^N_{m,k}:k\geq 1\}$ have the same distribution, but 
			they are not independent, the dependence coming from the correlations along the trajectory of the random walk $J^{2,N}$.
			In order to control these correlations, we proceed as follows. 
			
			%
			We start with the following decomposition:
			\begin{equation*}
				F^N_{m,k}:=F^{1,N}_{m,k}+F^{2,N}_{m,k}\, ,
			\end{equation*}
			where $F_{m,k}^{1,N}$ registers the first $N^3$ steps of the random walk $J^{2,N}$ only, that is,
			\begin{equation*}
				F^{1,N}_{m,k}\overset{}{=}\frac{N}{N_1\mu_N(i_m)}\sum_{\ell=S^{k-1}_{i_m}+1}^{S^k_{i_m}}
				\left(\sum_{j=1}^{(K^N_\ell-1)\wedge g_\ell}
				e^{\beta\sqrt{N}\Xi_{\sigma_1(i_m)J^{2,N}({\mathcal{G}}_{\ell}+j)}}T_{{\mathcal{G}}_{\ell} +j}\right),
			\end{equation*}	
			where $K^N_\ell=N^3-\Ups_\ell$, and $\Ups_0,\Ups_1,\ldots$ is a family of iid Bernoulli($\frac12$) random variables, independent of everything else.
			
			We  allow the $N^3$ steps to $J^{2,N}$ in order to enable a coupling, after those many steps, to the uniform invariant measure;
			the $\Ups$'s comprise another enabler of such a coupling, as it helps break the periodicity of $J^{2,N}$ --- see paragraph below~\eqref{BG}. The coupled process does not exhibit the above mentioned correlations.

			We next show that $F^{1,N}_{m,\cdot}$ makes a negligible contribution to the expression whose limit is taken in \eqref{eq_4.1.3}.
			Let us write
			\begin{equation}
				\mathbb{E}_1\left[\frac{1}{b_N}\sum_{k=1}^{b_N}  \frac{F^{1,N}_{m,k}}{\mathbb{E}_1[R_1^N]}\right] = \frac{\mathbb{E}_1\left[F^{1,N}_{m,1}\right]}{\mathbb{E}_1\left[R^N_1\right]}\, .
			\end{equation}
			Reproducing the estimate of the expectation of $F_m^N$ obtained in \eqref{expectationF_m}, we get
			\begin{equation*}
				\mathbb{E}_1\left[F^{1,N}_{m,1}\right]\le \frac{N}{N_12^{N_2}}\frac{N^3}{\mu_N(i_m)}\sum_{\sigma_2\in\mathcal{V}_{N_2}}e^{\sqrt{N}\Xi_{\sigma_1(i_m)\sigma_2}}\, .
			\end{equation*}
			Therefore, using \eqref{expectationRN} and the normalization $u_N^{-1}$, we obtain
			\begin{equation*}
				\frac{\mathbb{E}_1\left[F^{1,N}_{m,1}\right]}{\mathbb{E}_1\left[R^N_1\right]}
				\le\frac{\sum_{\sigma_2\in\mathcal{V}_{N_2}}\gamma^N(\sigma_1(i_m)\sigma_2)}{\sum_{m=1}^M\sum_{\sigma_2\in\mathcal{V}_{N_2}}\left[\gamma^N(\sigma_1(i_m)\sigma_2)+\gamma^N(\sigma_1(j_m)\sigma_2) \right]}\,\frac{N^3}{\mu_N(i_m)}\, .
			\end{equation*}
			Using \eqref{convergence_exp} and recalling, from \eqref{def_mu}, the definition of $\mu_N(i_m)$, we have that, by \eqref{limit_thm_1}, the last factor above goes to zero as $N\to\infty$, which proves that  
			\begin{equation}\label{tch1}
				\lim_{N\to\infty}\frac{1}{b_N}\sum_{k=1}^{b_N} \frac{F^{1,N}_{m,k}}{\mathbb{E}_1[R^N_1]}=0\, ,
			\end{equation}	
			in probability. 
			%
			%
			It remains to show that 
			\begin{equation}\label{remainsF2}
				\lim_{N\to\infty}\frac{1}{b_N}\sum_{k=1}^{b_N} \frac{F^{2,N}_{m,k}}{\mathbb{E}_1[R^N_1]}= \frac{\gamma(i_m)}{\sum_{m=1}^{M}\left[\gamma(i_m)+\gamma(j_m)\right]}\, ,
			\end{equation}	
			in probability.

			Let us recall Lemma 3.1 of \cite{bovier_veronique}, which says that for a simple symmetric random walk $\{J^N(k):k\geq 0\}$ on $\mathcal{V}_N$, where $\pi$ is the uniform distribution on $\mathcal{V}_N$, $\theta_N := \frac{3\ln2}{2}N^2$, $\sigma,\widetilde{\sigma}\in\mathcal{V}_N$, and $i\ge 1$, we have
			\begin{equation}\label{BG}
				\left| \sum_{\ell=0}^{1}\mathbb{P}_{\pi}[J^N(\theta_N + i+\ell)=\tilde{\sigma}, J^N(0)=\sigma] - 2\pi(\sigma)\pi(\tilde{\sigma}) \right|\leq 2^{-3N+1}\, .
			\end{equation}
			
			
			In the event that $g_\ell\geq N^3$, we couple $J^{2,N}(\GG_\ell+K^N_\ell)$ to $U^N_\ell$, where $U^N_0,U^N_1,\ldots$ are iid random variables which are uniformly distributed in $\mathcal{V}_{N_2}$.
			(\ref{BG}) and $\beta<\beta_{FT}$ imply that the coupling may be set so that it holds for all $\ell\leq S^{b_N}_{i_m}$, for $b_N$ defined in \eqref{b_N}, with probability vanishingly close to 1 as $N\to\infty$.

			We may thus replace $J^{2,N}(\GG_\ell+K^N_\ell)$ by $U^N_\ell$ 
			in $F^{2,N}_{m,k}$, thus obtaining an iid family $\breve F^{2,N}_{m,k}$, $1\leq k\leq b_N$, and it is enough to 
			establish~(\ref{remainsF2}) with $\breve F^{2,N}_{m,k}$ replacing $F^{2,N}_{m,k}$. The modification of $J^{2,N}$ and $J^{N}$ produced by these replacements will denoted by $\breve J^{2,N}$ and $\breve J^{N}$, respectively. We notice that $\breve F^{2,N}_{m,k},\,k\geq1$, are iid.
			
			Then \eqref{remainsF2} follows from Chebyshev's inequality, once we use~(\ref{tch}, \ref{tch1}), and show that
			\begin{align}
				&\lim_{N\to\infty}\frac{1}{b_N}\frac{\mathbb{E}_1\left[\left(\breve F^{2,N}_{m,1}\right)^2\right]}{\left(\mathbb{E}_1\left[R^N_1\right]\right)^2}=0\, \label{largenumberscond2}.	
			\end{align}
			
			Since $\breve F^{2,N}_{m,1}$is stochastically bounded by $F^{N}_{m,1}$, 
			it is enough to show that
			\begin{equation*}
				\lim_{N\to\infty}\frac{1}{b_N}\frac{\mathbb{E}_1\left[\left(F^{N}_{m,1}\right)^2\right]}{\left(\mathbb{E}_1\left[R^N\right]\right)^2}=0\, 
				\Longleftrightarrow
				\lim_{N\to\infty}\frac{1}{c_N}\frac{\mathbb{E}_1\left[\left(F^N_{m,1}\right)^2\right]}{\mathbb{E}_1\left[R^N\right]}=0\, .
			\end{equation*}
			Now, observe that 
			\begin{equation*}
				(F^N_{m,1})^2 = \left(\sum_{\ell=1}^{S}H^N_{m,\ell}\right)^2\le S\sum_{\ell=1}^{S}\left(H^N_{m,\ell}\right)^2,\,
				\text{ where } S=S_{i_m}^1;
			\end{equation*}
			so, conditioning on  $J^{1,N}$, we get
			\begin{equation*}
				\mathbb{E}_1\left[ \left(F^N_{m,1}\right)^2\right]\le \mathbb{E}_{\pi_2}\left[\left(H^N_{m,1}\right)^2\right]\mathbb{E}_{\sigma_1(i_1)}\left[S^2\right]\, .
			\end{equation*}
			As a consequence of Corollary 1.5 in \cite{gayrard2008}, 
			to the effect that the steps of $J^{1,N}$ among the configurations $\sigma_1(i)$, $i\in\M$, is approximately uniformly distributed,
			there exists a positive constant $D$ such that $
			\mathbb{E}_{\sigma_1(i_1)}\left[S^2\right]\le D$ for all $N$; it thus suffices to prove
			\begin{equation}\label{eq_4.1.9}
				\lim_{N\to\infty}\frac{1}{c_N}\frac{\mathbb{E}_{\pi_2}\left[\left(H^N_{m,1}\right)^2\right]}{\mathbb{E}_1\left[R^N_1\right]}=0\, .
			\end{equation}
			Let us write
			\begin{align*}
				&\left(\sum_{j=0}^{g_1-1}e^{\beta\sqrt{N}\Xi_{\sigma_1(i_m)J^{2,N}(j)}}T_j\right)^2
				=\sum_{j=0}^{g_1-1}e^{2\beta\sqrt{N}\Xi_{\sigma_1(i_m)J^{2,N}(j)}}T^2_j \\
				&\hspace{4.6cm}+\, 2\!\!\!\sum_{0\le j<\ell\le g_1-1}e^{\beta\sqrt{N}\Xi_{\sigma_1(i_m)J^{2,N}(j)}}e^{\beta\sqrt{N}\Xi_{\sigma_1(i_m)J^{2,N}(\ell)}}T_jT_\ell\, .
			\end{align*}
			Observe that 
			\begin{equation*}
				\mathbb{E}_{\pi_2}\left[\sum_{j=0}^{g_1-1}e^{2\beta\sqrt{N}\Xi_{\sigma_1(i_m)J^{2,N}(j)}}T^2_j\right]
				=\widetilde{\mathbb{E}}\left[T^2_1\right]\frac{\mu_N(i_m)}{2^{N_2}}\sum_{\sigma_2\in\mathcal{V}_{N_2}}e^{2\beta\sqrt{N}\Xi_{\sigma_1(i_m)\sigma_2}}\, .
			\end{equation*}
			
			Then
			\begin{align*}
				&\hspace{3cm}\frac{N^2}{(N_1\mu_N(i_m))^2}\frac{\mathbb{E}_{\pi_2}
					\left[\sum_{j=0}^{g_1-1}e^{2\beta\sqrt{N}\Xi_{\sigma_1(i_m)J^{2,N}(j)}}T^2_j\right]}{c_N\mathbb{E}_1\left[R^N\right]}&\\
				&=\widetilde{\mathbb{E}}\left[T_1^2\right]\frac{N}{N_2}\frac{\sum_{\sigma_2\in\mathcal{V}_{N_2}}\left(\gamma^N(\sigma_1(i_m)\sigma_2)\right)^2}{\sum_{m=1}^{M}\sum_{\sigma_2\in\mathcal{V}_{N_2}}\left[\gamma^N(\sigma_1(i_m)\sigma_2)+\gamma^N(\sigma_1(j_m)\sigma_2)\right]}\frac{e^{-\beta\sqrt{aN}\left(\left[\Xi_{\sigma_1(i_m)}^{(1)}-\beta_*\sqrt{aN}\right]-L\right)}}{1+\frac{N_1}{N_2}e^{-\beta\sqrt{aN}\Xi^{(1)}_{\sigma_1(i_m)}}}\, .&
			\end{align*}
			By \eqref{convergence_exp}, \eqref{limit_thm_1} and the definition of $i_m$, for which $W_{i_m}>L$, we get that the expression above vanishes as $N\to\infty$.

			For the estimate of the remaining part of $\mathbb{E}_{\pi_2}\left[\left(H^N_{m,1}\right)^2\right]$, observe that
			\begin{align*}
				&\mathbb{E}_{\pi_2}\left[\sum_{0\le j<\ell\le g_1-1}e^{\beta\sqrt{N}\Xi_{\sigma_1(i_m)J^{2,N}(j)}}
				e^{\beta\sqrt{N}\Xi_{\sigma_1(i_m)J^{2,N}(\ell)}}T_jT_\ell\right]\\
				&=\sum_{\sigma_2,\tilde{\sigma_2}\in\mathcal{V}_{N_2}}e^{\beta\sqrt{N}\Xi_{\sigma_1(i_m)\sigma_2}}
				e^{\beta\sqrt{N}\Xi_{\sigma_1(i_m)\tilde{\sigma_2}}}\mathbb{E}_{\pi_2}
				\left[\sum_{0\le j<\ell\le g_1-1}\mathbb{1}_{\{J^{2,N}(j)
					=\sigma_2\}}\mathbb{1}_{\{J^{2,N}(\ell)=\tilde{\sigma_2}\}}\right]\, .
			\end{align*}
			To estimate the second factor in the last line above, we split the summation inside the expectation considering separately values of $j$ and $\ell$ whose difference is smaller or bigger than $N^3$, then using \eqref{def_mu}, \eqref{BG} and some straightforward computations we obtain that there exists a constant $C>0$, independent of $N$, such that, for all $N$ large, holds
			\begin{equation*}
				\mathbb{E}_{\pi_2}\left[\sum_{0\le j<\ell\le g_1-1}\mathbb{1}_{\{J^{2,N}(j)
					=\sigma_2\}}\mathbb{1}_{\{J^{2,N}(\ell)=\tilde{\sigma_2}\}}\right]\leq CN^3\frac{e^{\beta\sqrt{aN}\Xi^{(1)}_{\sigma_1(i_m)}}}{2^{N_2}}\, .
			\end{equation*}
			Then
			\begin{align*}
				&\frac{N^2}{(N_1\mu_N(i_m))^2}\frac{\mathbb{E}_{\pi_2}
					\left[\sum_{0\le j<\ell\le g_1-1}e^{\beta\sqrt{N}\Xi_{\sigma_1(i_m)J^{2,N}(j)}}e^{\beta\sqrt{N}
						\Xi_{\sigma_1(i_m)J^{2,N}(\ell)}}T_jT_\ell\right]}{c_N\mathbb{E}_1[R^N]}\\
				&\leq \frac{C\frac{N}{N_2}}{1+\frac{N_1}{N_2}e^{-\beta\sqrt{aN}\Xi_{\sigma_1(i_m)}^{(1)}}} \frac{\sum_{\sigma_2\in\mathcal{V}_{N_2}}\left(\gamma^N(\sigma_1(i_m)\sigma_2)\right)^2\left[e^{-\beta\sqrt{aN}\left(\left[\Xi_{\sigma_1(i_m)}^{(1)}-\beta_*\sqrt{aN}\right]-L\right)}\right]N^3}{\sum_{m=1}^{M}\sum_{\sigma_2\in\mathcal{V}_{N_2}}\left[\gamma^N(\sigma_1(i_m)\sigma_2)+\gamma^N(\sigma_1(j_m)\sigma_2)\right]}\, ,
			\end{align*}
			which goes to zero as $N\to\infty$, and this establishes~\eqref{largenumberscond2}. The proof of \eqref{eq_4.1.3} is complete.
			
			\bigskip
			
			Let us prove now \eqref{eq_4.1.4}. Notice that $Q^N_{m,k},k\geq 1,$ have the same distribution.
			Now, consider the following indicator random variables:
			\begin{equation*}
				\widehat{B}^N_k:=
				\left\lbrace
				\begin{array}{l}
					1; \text{ if } \sigma(j_m)    \text{ is visited by } J^N \text{ between the } 	k^{th} \text{ and the  } (k+1)^{th} \text{ return of } J^{1,N} \text{ to } \sigma_1(i_1)\, ,\\
					0; \text{ otherwise } \, ,
				\end{array}
				\right. 
			\end{equation*}
			and let us write
			\begin{equation*}
				Q^N_{m,k}=Q^N_{m,k}\widehat{B}^N_k +Q^N_{m,k} (1-\widehat{B}^N_k).
			\end{equation*}	
			
			Proceeding as in the proof of Lemma \ref{lemma_4.1.1}, similarly as in \eqref{expectationF_m}, we find that
			\begin{equation*}
				\mathbb{E}_1\left[Q^{N}_{m,1}(1-\widehat B^N_1)\right]\leq \frac{N}{N_1}\frac{1}{2^{N_2}}\sum_{\sigma_2\in\mathcal{V}_{N_2},\sigma_2\neq\sigma_2(j_m)}e^{\beta\sqrt{N}\Xi_{\sigma_1(j_m)\sigma_2}}\, .
			\end{equation*}	
			Then, using \eqref{expectationRN} and the normalization $u_N^{-1}$, we obtain  
			\begin{equation*}
				\mathbb{E}_1\left[\frac{1}{b_N}\sum_{k=1}^{b_N}\frac{Q^{N}_{m,k}(1-\widehat B^N_k)}{\mathbb{E}_1\left[R^N_1\right]} \right]
				\leq\frac{\sum_{\sigma_2\neq\sigma_2(j_m)}\gamma^N(\sigma_1(j_m)\sigma_2)}{\sum_{m=1}^M\sum_{\sigma_2\in\mathcal{V}_{N_2}}
					\left[\gamma^N(\sigma_1(i_m)\sigma_2)+\gamma^N(\sigma_1(j_m)\sigma_2) \right]}\, .
			\end{equation*}
			By \eqref{convergence_exp}, the right hand side above goes to zero as $N\to\infty$, which proves that
			\begin{equation}\label{eq_4.1.6}
				\frac{1}{b_N}\sum_{k=1}^{b_N} \frac{Q^N_{m,k}(1-\widehat{B}^N_k)}{\mathbb{E}_1[R^N_1]} = 0
			\end{equation} 
			in probability. 
			
			
			
			In order to conclude we will show that 
			\begin{equation} 
				\P_1\big(\cup_{k=1}^{b_N}\widehat{B}^N_k\big)\to0 \text{ as } N\to\infty,
			\end{equation}
			which follows from 
			\begin{equation} 
				b_N\P_1\big(\widehat{B}^N_1\big)\to0 \text{ as } N\to\infty.
			\end{equation}
			
			For $m=1,\ldots, M$ and $\ell\geq1$, let $\hat\tau_\ell(m)$ 
			and $\hg_\ell$ be as in the paragraph of~\eqref{gls}, except that we replace $i_m$ by $j_m$. 
			When $J^{1,N}$ is visiting state $\sigma_1(j_m)$ for the $\ell^{th}$ time, $\ell\geq1$, we consider $\vartheta_\ell$ defined as the number of steps that $J^{2,N}$ takes to reach $\sigma_2(j_m)$, if ever during that visit. We then have that 
			\begin{equation}
				\widehat{B}^N_1=\cup_{\ell=1}^{\hat S}\{\hg_\ell>\vartheta_\ell\},\,\text{ where } \hat S=S^1_{j_m}.
			\end{equation}
			From the elementary theory of Markov chains, it follows that $\E_1(\hat S)=1$; see (7.17) in \cite{fg2018}.
			It is thus enough to show that 
			\begin{equation}\label{est1}
				b_N\P_1\big(\hg_1>\vartheta_1\big)\to0 \text{ as } N\to\infty. 
			\end{equation}
			
			Because of $\hat{g}_1$ is a Geometric random variable with mean $1+\frac{N_2}{N_1}e^{\beta\sqrt{aN}\Xi^{(1)}_{\sigma_1(j_m)}}$, the latter probability is readily seen to equal $\E_1[(1-q)^{\vartheta_1}]$, where $q=\big(1+\frac{N_2}{N_1}e^{\beta\sqrt{aN}\Xi^{(1)}_{\sigma_1(j_m)}}\big)^{-1}$.
			We may then use Kemperman's formula to write 
			\begin{equation}\label{kemp}
				\E_1[(1-q)^{\vartheta_1}]=\E_{\pi_2}[(1-q)^{\vartheta_1}]=\frac1{B_0(\l)}\frac1{2^{N_2}}\sum_{i=0}^{N_2}{N_2\choose i} B_i(\l),
			\end{equation}
			where for $i=0,\ldots,N_2$, 
			\begin{equation}\label{Bi}
				B_i(\l)=\int_0^1(1-u)^i(1+u)^{N_2-i}u^{\l-1}du=\sum_{j=0}^{N_2-i}{N-i\choose j}\frac{\G(i+1)\G(\l+j)}{\G(\l+i+j+1)},
			\end{equation}
			with 
			$\l=\l_N=\frac{N_2}2\frac{q}{1-q}=\frac{N_1}{2}e^{-\beta\sqrt{aN}\Xi^{(1)}_{\sigma_1(j_m)}}
			=\frac{N_1}{2}e^{-\beta\beta_*aN}e^{-\beta\sqrt{aN}(W_{j_m}+o_1)}$; 
			see (4.13-14) in~\cite{fg2018}.
			We readily find that the expression to the left in~\eqref{est1} equals
			\begin{align}\label{est2}
				\frac{b_N}{B_0(\l)}\int_0^1u^{\l-1}du\frac1{2^{N_2}}\sum_{i=0}^{N_2}{N_2\choose i} (1-u)^i(1+u)^{N_2-i}
				&=\frac{b_N}{B_0(\l)}\int_0^1u^{\l-1}du=\frac{b_N}{\l B_0(\l)}\nonumber\\
				&=\frac{b_N}{1+\l\sum_{i=1}^{N_2}{N_2\choose i}\frac{1}{i+\l}},
			\end{align}
			and that the sum in the denominator on the right hand side above is 
			\begin{equation}\label{kempf}
				\sim\sum_{i=1}^{N_2}{N_2\choose i}\frac1{i}\sim\frac{2^{N_2+1}}{N_2}.
			\end{equation}
			
			From the above 
			and~\eqref{b_N}, we find that the left hand side of~\eqref{est1} equals
			\begin{align}
				\frac{N_1}{2N}
				\frac{e^{\beta\left(\beta_*N- \frac{\log N +\kappa}{2\beta_*}\right)}}{
					\sum_{m=1}^{M}\left[\sum_{\sigma_2\in\mathcal{V}_{N_2}}e^{\beta\sqrt{N}\Xi_{\sigma_1(i_m)\sigma_2}}	+\sum_{\sigma_2\in\mathcal{V}_{N_2}}e^{\beta\sqrt{N}\Xi_{\sigma_1(j_m)\sigma_2}}\right]}\nonumber\\
				\times\frac{e^{-\beta\beta_*aN-\beta\sqrt{aN}L}}{2^{-N_2}+\frac{N_1}{N_2}e^{-\beta\beta_*aN}e^{-\beta\sqrt{aN}(W_{j_m}+o_1)}}.
			\end{align}
			The first quotient above is clearly of order 1, and the second quotient may be also checked to be of order 1 (since the numerator is the right scale for the denominator, as follows from~\eqref{BK} and~\eqref{def2}), and we readily check that the third quotient
			is of order $e^{-\beta\sqrt{aN}(L-W_{j_m}+o_1)}$ (here we have used that $\beta<\bar\beta_{FT}$), and this vanishes as $N\to\infty$ since by definition $W_{j_m}<L$. The result follows.

			
		\end{proof}
		

		\begin{corollary}\label{large_numbers_coro}  
			Given $\delta>0$, consider  $b_N:=\left\lfloor  \frac{\delta c_N}{\mathbb{E}_1[R^N_1]} \right\rfloor$. Then for all $m=1,\dots, M$ we have
			\begin{equation*}
				\lim_{N\to\infty}\frac{1}{b_N}\sum_{k=1}^{b_N} \frac{R^N_k}{\mathbb{E}_1\left[R^N_1\right]} = \frac{\sum_{m=1}^{M}\gamma(i_m)}{\sum_{m=1}^{M}\left[ \gamma(i_m)+\gamma(j_m)\right]}\, ,
			\end{equation*}
			in probability.
		\end{corollary}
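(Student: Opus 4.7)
The plan is to derive the corollary directly from the decomposition~\eqref{R^N} together with the two limits established in Lemma~\ref{lemma_4.1.2}. Recalling that
\begin{equation*}
R^N_k = \sum_{m=1}^{M}\left[F^N_{m,k} + Q^N_{m,k}\right],
\end{equation*}
I would interchange the two finite sums to write
\begin{equation*}
\frac{1}{b_N}\sum_{k=1}^{b_N}\frac{R^N_k}{\mathbb{E}_1[R^N_1]}
= \sum_{m=1}^{M}\left(\frac{1}{b_N}\sum_{k=1}^{b_N}\frac{F^N_{m,k}}{\mathbb{E}_1[R^N_1]}\right)
+ \sum_{m=1}^{M}\left(\frac{1}{b_N}\sum_{k=1}^{b_N}\frac{Q^N_{m,k}}{\mathbb{E}_1[R^N_1]}\right).
\end{equation*}

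Since $M$ is a fixed finite number, one may apply~\eqref{eq_4.1.3} to each of the $M$ terms in the first sum, obtaining convergence in probability to $\gamma(i_m)/\sum_{m=1}^{M}[\gamma(i_m)+\gamma(j_m)]$ for the $m$-th summand; similarly,~\eqref{eq_4.1.4} yields that each of the $M$ summands in the second sum converges in probability to $0$. Because convergence in probability is stable under finite sums (via a union bound and the triangle inequality applied to the deviation events), the full expression converges in probability to
\begin{equation*}
\sum_{m=1}^{M}\frac{\gamma(i_m)}{\sum_{m'=1}^{M}\left[\gamma(i_{m'})+\gamma(j_{m'})\right]}
= \frac{\sum_{m=1}^{M}\gamma(i_m)}{\sum_{m=1}^{M}\left[\gamma(i_m)+\gamma(j_m)\right]},
\end{equation*}
which is the asserted limit.

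There is essentially no obstacle beyond bookkeeping: Lemma~\ref{lemma_4.1.2} does all of the work in the two nontrivial directions (the contribution from the ``good'' sites $i_m$ with $W_{i_m}>L$ and the negligibility of the ``bad'' sites $j_m$ with $W_{j_m}<L$), and the corollary is just the aggregation of those statements into a statement about the full return time $R^N_k$. It is worth noting that the finiteness of $M$ is essential here, so that the aggregation over $m$ does not introduce any uniformity requirement; uniformity in $M$ will be handled later through Proposition~\ref{prop_4.2}.
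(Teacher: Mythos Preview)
Your proof is correct and follows exactly the approach of the paper, which simply states that the corollary ``follows immediately from Lemma~\ref{lemma_4.1.2}.'' You have merely spelled out the one-line justification: decompose $R^N_k$ via~\eqref{R^N}, apply~\eqref{eq_4.1.3} and~\eqref{eq_4.1.4} term by term, and use that convergence in probability is preserved under finite sums.
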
	
		
		\begin{proof}
			Follows immediately from Lemma \ref{lemma_4.1.2}.
		\end{proof}
		
		\begin{lemma}\label{lemma_L^N} Let 
			\begin{equation}\label{L^N}
				L^N(t):=\max\left\{ n\geq 0: \sum_{k=0}^n R^N_k \le t\right\}\, , \, \text{ for } t\ge 0.
			\end{equation}
			Then
			\begin{equation*}
				\lim_{N\to\infty} \frac{\mathbb{E}_1\left[R^N_1\right]L^N(c_N)}{c_N} = \frac{\sum_{m=1}^{M}\left[\gamma(i_m)+\gamma(j_m)\right]}{\sum_{m=1}^{M}\gamma(i_m)}\, ,
			\end{equation*}
			in probability.
		\end{lemma}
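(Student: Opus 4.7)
The plan is a standard renewal-inversion argument based on Corollary \ref{large_numbers_coro}. Let
\[
\rho := \frac{\sum_{m=1}^{M}\gamma(i_m)}{\sum_{m=1}^{M}[\gamma(i_m)+\gamma(j_m)]},
\]
so that the target limit is $1/\rho$. The definition of $L^N(\cdot)$ in~\eqref{L^N} gives the equivalence $\{L^N(c_N)\geq b\}=\{\sum_{k=0}^{b}R^N_k\leq c_N\}$ for every integer $b\geq 0$, and hence controlling $L^N(c_N)$ reduces to controlling partial sums of the $R^N_k$.

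Fix $\varepsilon\in(0,1)$ small, set $\delta^{\pm}:=(1\pm\varepsilon)/\rho$, and define $b_N^{\pm}:=\lfloor\delta^{\pm}c_N/\mathbb{E}_1[R^N_1]\rfloor$. Applied with these two choices of $\delta$, Corollary \ref{large_numbers_coro} yields
\[
\frac{1}{c_N}\sum_{k=1}^{b_N^{\pm}}R^N_k \;=\; \frac{\mathbb{E}_1[R^N_1]\,b_N^{\pm}}{c_N}\cdot\frac{1}{b_N^{\pm}}\sum_{k=1}^{b_N^{\pm}}\frac{R^N_k}{\mathbb{E}_1[R^N_1]} \;\xrightarrow[N\to\infty]{\mathbb{P}}\; \delta^{\pm}\rho \;=\; 1\pm\varepsilon.
\]
Since each $R^N_k\geq 0$, this immediately gives $\mathbb{P}[L^N(c_N)\geq b_N^{+}]\leq\mathbb{P}[\sum_{k=1}^{b_N^{+}}R^N_k\leq c_N]\to 0$. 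Symmetrically, via the inclusion $\{L^N(c_N)<b_N^{-}\}\subset\{\sum_{k=1}^{b_N^{-}}R^N_k>c_N-R^N_0\}$, the same display bounds this probability by a quantity vanishing as $N\to\infty$, provided $R^N_0/c_N\to 0$ in probability. This last assertion is obtained by an expectation bound of the same flavor as~\eqref{expectationF_m}--\eqref{expectationRN} (extended to cover the initial transient of $X^N$ before it first enters $i_1$), yielding $\mathbb{E}[R^N_0]=o(c_N)$, followed by Markov's inequality.

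Combining the two one-sided estimates, $\mathbb{P}[b_N^{-}\leq L^N(c_N)<b_N^{+}]\to 1$. Since $b_N^{\pm}\,\mathbb{E}_1[R^N_1]/c_N\to(1\pm\varepsilon)/\rho$, dividing through and letting $\varepsilon\downarrow 0$ along a countable sequence yields the claimed convergence in probability to $1/\rho$. The only delicate point is the negligibility of the boundary term $R^N_0$ at scale $c_N$ --- essentially, the verification that the time required by $X^N$ to first reach $i_1$ from the uniform initial distribution is dominated by $c_N$; but this is handled routinely by the moment estimates already in place, together with the gap $\mathbb{E}_1[R^N_1]=o(c_N)$ that follows from the assumption $\beta<\bar\beta_{FT}$.
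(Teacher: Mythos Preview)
Your proof is correct and follows essentially the same approach as the paper: both invoke Corollary~\ref{large_numbers_coro} for the main renewal-inversion step and then separately show $R^N_0/c_N\to 0$ in probability via a first-moment bound (the paper cites Lemma~7.4 of~\cite{fg2018} to get $\mathbb{E}_{\pi_1\times\pi_2}[R^N_0]\leq 2\,\mathbb{E}_1[R^N_1]$, then uses $\beta<\bar\beta_{FT}$ exactly as you do). You have made explicit the two-sided sandwich argument that the paper leaves implicit in the phrase ``Follows from Corollary~\ref{large_numbers_coro}.''
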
	
		
		\begin{proof}
			Follows from Corollary \ref{large_numbers_coro}. However, since the summation in the definition of $L^N$ starts at $k=0$, we only need to prove that the first summand, when divided by $c_N$, goes to zero in probability. Indeed, using that $
			\mathbb{E}_{\pi_1}\left[\sum_{k=0}^{\tau^1-1}\mathbb{1}_{\{J^{1,N}(k)=\sigma_1\}}\right]\leq 2
			$ for any $\sigma_1\in\mathcal{V}_n$ and $\tau^1$ as defined in \eqref{tau}, (see Lemma 7.4 in \cite{fg2018}), and reproducing the estimate \eqref{expectationRN}, we get
			\begin{equation*}
				\mathbb{E}_{\pi_1\times\pi_2}[R_0^N]\leq 2 \frac{N}{N_1}\frac{1}{2^{N_2}}\sum_{m=1}^{M}\left[\sum_{\sigma_2\in\mathcal{V}_{N_2}}e^{\beta\sqrt{N}\Xi_{\sigma_1(i_m)\sigma_2}}	+\sum_{\sigma_2\in\mathcal{V}_{N_2}}e^{\beta\sqrt{N}\Xi_{\sigma_1(j_m)\sigma_2}}\right]\, .
			\end{equation*}
			Since $\beta<\beta_{FT}$ we obtain the desired convergence.
		\end{proof}	
		
		\begin{lemma} \label{renewal_lemma}For $m=1,\dots, M$ and $L^N$ as defined in \eqref{L^N}, we have
			\begin{equation*}
				\lim_{N\to\infty}\frac{1}{L^N(c_N)}\sum_{k=0}^{L^N(c_N)}\frac{F^N_{m,k}}{\mathbb{E}_1\left[F^N_{m,k}\right]} = 1\, ,
			\end{equation*}
			in probability.
		\end{lemma}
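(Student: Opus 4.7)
The plan is to reduce the statement to the deterministic-upper-limit law of large numbers already established in Lemma \ref{lemma_4.1.2}, by sandwiching the random index $L^N(c_N)$ between two non-random quantities of the form $b_N$ appearing in that lemma. Set
\begin{equation*}
D:=\frac{\sum_{m=1}^{M}[\gamma(i_m)+\gamma(j_m)]}{\sum_{m=1}^{M}\gamma(i_m)},
\end{equation*}
and, for each $\epsilon>0$, let $b^\pm_N:=\lfloor(D\pm\epsilon)c_N/\mathbb{E}_1[R^N_1]\rfloor$. Lemma \ref{lemma_L^N} guarantees that the event $A^\epsilon_N:=\{b^-_N\le L^N(c_N)\le b^+_N\}$ has probability tending to $1$ as $N\to\infty$.

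Since $F^N_{m,k}\geq 0$, on $A^\epsilon_N$ one has the deterministic sandwich
\begin{equation*}
\frac{b^-_N}{b^+_N}\cdot\frac{1}{b^-_N}\sum_{k=0}^{b^-_N}\frac{F^N_{m,k}}{\mathbb{E}_1[F^N_{m,1}]}
\le
\frac{1}{L^N(c_N)}\sum_{k=0}^{L^N(c_N)}\frac{F^N_{m,k}}{\mathbb{E}_1[F^N_{m,1}]}
\le
\frac{b^+_N}{b^-_N}\cdot\frac{1}{b^+_N}\sum_{k=0}^{b^+_N}\frac{F^N_{m,k}}{\mathbb{E}_1[F^N_{m,1}]}.
\end{equation*}
Next I would apply Lemma \ref{lemma_4.1.2} with $\delta$ replaced by $D\pm\epsilon$; combined with \eqref{tch}, this yields that $\frac{1}{b^\pm_N}\sum_{k=1}^{b^\pm_N}F^N_{m,k}/\mathbb{E}_1[F^N_{m,1}]\to 1$ in probability. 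Since $b^\mp_N/b^\pm_N\to(D\mp\epsilon)/(D\pm\epsilon)$ deterministically, the two outer expressions in the sandwich converge in probability to $(D-\epsilon)/(D+\epsilon)$ and $(D+\epsilon)/(D-\epsilon)$ respectively, and sending $\epsilon\downarrow 0$ pinches the middle expression to $1$.

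Two ancillary checks remain. First, the $k=0$ boundary term must be shown to be negligible on this scale; this follows from the same Lemma 7.4 type estimate employed in the proof of Lemma \ref{lemma_L^N}, which gives $\mathbb{E}_{\pi_1\times\pi_2}[F^N_{m,0}]\le C\,\mathbb{E}_1[F^N_{m,1}]$ for some constant $C$, together with $L^N(c_N)\to\infty$ in probability and Markov's inequality. Second, one verifies that the proof of Lemma \ref{lemma_4.1.2} goes through unchanged for any fixed positive value of its parameter (in particular for $D\pm\epsilon$), since the second-moment bounds on $F^N_{m,1}$ and the Bovier--Gayrard coupling estimate \eqref{BG} invoked there do not depend on the choice of $\delta$. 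I do not anticipate a serious obstacle; the only conceptual point is the sandwich argument itself, whose success rests on the non-negativity of the $F^N_{m,k}$ and on the asymptotic constancy of $L^N(c_N)\,\mathbb{E}_1[R^N_1]/c_N$ provided by Lemma \ref{lemma_L^N}.
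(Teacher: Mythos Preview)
Your proof is correct and is essentially a detailed fleshing-out of the paper's one-line proof (``Follows readily from Lemmas \ref{lemma_4.1.1} and \ref{lemma_L^N}''); the sandwich argument you give, using Lemma \ref{lemma_4.1.2} at the deterministic endpoints $b^\pm_N$ and Lemma \ref{lemma_L^N} to trap the random index, is exactly the standard mechanism the paper is implicitly invoking. Your treatment of the $k=0$ boundary term and the observation that Lemma \ref{lemma_4.1.2} is valid for any fixed $\delta>0$ are the right ancillary checks.
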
	
		\begin{proof}
			Follows readily from Lemmas \ref{lemma_4.1.1} and \ref{lemma_L^N}.
		\end{proof}	
		
		\begin{proof}[Proof of Proposition \ref{prop_4.1}] 
			Note that
			\begin{equation}\label{eq_4.1.20}
				\sum_{k=0}^{L^N(c_N)}F^N_{\ell,k}\le \int_{0}^{c_N}\mathbb{1}_{\{X^N_M(s)=\ell\}}ds\le \sum_{k=0}^{L^N(c_N)+1}F^N_{\ell,k}\, .
			\end{equation}
			Writing 
			\begin{align*}
				\frac{1}{c_N}\sum_{k=0}^{L^N(c_N)}F^N_{\ell,k}=\left(\frac{1}{L^N(c_N)}\sum_{k=0}^{L^N(c_N)}\frac{F^N_{\ell,k}}{\mathbb{E}_1\left[F^N_{\ell,1}\right]} \right)\frac{\mathbb{E}_1\left[F^N_{\ell,1}\right]}{\mathbb{E}_1\left[R^N_1\right]}\left(\frac{L^N(c_N)\mathbb{E}_1\left[R^N_1\right]}{c_N}\right)
			\end{align*}
			and using Lemma \ref{lemma_4.1.1}, Lemma \ref{lemma_L^N} and Lemma \ref{renewal_lemma} we obtain the desired convergence.
		\end{proof}

		\subsection{Proof of Proposition \ref{prop_4.2}}\label{sub_seq_4.2}
		
		Let $O_0^N$ be the time spent by $X^N$ outside $I_M\cup J_M$ until its first visit to $i_1$, and for $k\ge 1$, let  $O_k^N$ be the time spent outside of $I_M\cup J_M$ by $X^N$ between the $k^{th}$ and the $(k+1)^{th}$ visit to $i_1$. Reproducing the estimate \eqref{expectationF_m} and using that $\mathbb{E}_{\pi_1}\left[\sum_{k=0}^{\tau-1}\mathbb{1}_{\{J^{1,N}(k)=\sigma_1\}}\right]\leq 2
		$ (see Lemma 7.4 in \cite{fg2018}), we get
		\begin{align}\label{eq_4.2.1}
			\mathbb{E}_{\pi_1\times\pi_2}\left[O_0^N\right]&\leq  2\sum_{i\notin I_M\cup J_M}\frac{1}{2^{N_2}}\sum_{\sigma_2\in\mathcal{V}_{N_2}}\frac{N}{N_1}e^{\beta\sqrt{N}\Xi_{\sigma_1(i)\sigma_2}} \, ,
		\end{align}
		and for $k\ge 1$,
		\begin{align}\label{Uk}
			\mathbb{E}_1\left[O^N_k\right]&=\sum_{i\notin I_M\cup J_M}\frac{1}{2^{N_2}}\sum_{\sigma_2\in\mathcal{V}_{N_2}}\frac{N}{N_1}e^{\beta\sqrt{N}\Xi_{\sigma_1(i)\sigma_2}} \, .
		\end{align}	
		Notice that 
		\begin{equation*}
			T^{N,out}_M(1)\leq O_0^N + \sum_{k=1}^{L^N(c_N)+1}O_k^N\, .
		\end{equation*}	
		For any $\lambda>0$ we have
		\begin{equation*}
			\mathbb{P}_{\pi_1\times\pi_2}\left[\frac{1}{c_N}T^{N,out}_M(1)>\lambda\right]\leq \frac{2}{\lambda}\frac{\mathbb{E}_{\pi_1\times\pi_2}\left[O_0^N\right]}{c_N} + \mathbb{P}_{\pi_1\times\pi_2}\left[\frac{1}{c_N}\sum_{k=1}^{L^N(c_N)+1}O^N_k>\frac{\lambda}{2}\right].
		\end{equation*}
		From \eqref{eq_4.2.1} we get that
		\begin{equation*}
			\limsup_{N\to\infty}\frac{\mathbb{E}_{\pi_1\times\pi_2}\left[O_0^N\right]}{c_N}  \leq  2\limsup_{N\to\infty}\frac{1}{2^{N_2}}e^{\beta(\beta_*aN+\sqrt{aN}L)}\frac{N}{N_1}\sum_{ i\notin I_M\cup J_M}\sum_{\sigma_2\in\mathcal{V}_{N_2}}\gamma^N(\sigma_1(i)\sigma_2)\, .
		\end{equation*}
		Since, for $\beta<\beta_{FT}$, we have that $\frac{1}{2^{N_2}}e^{\beta(\beta_*aN+\sqrt{aN}L)}$ goes to zero as $N\to \infty$, we obtain 
		\begin{equation}\label{eq_4.2.2}
			\lim_{N\to\infty}\frac{\mathbb{E}_{\pi_1\times\pi_2}\left[O_0^N\right]}{c_N}= 0\, .
		\end{equation}
		To make the notation easier let us denote  $\displaystyle s_M:=\frac{\sum_{m=1}^{M}\left[\gamma(i_m)+\gamma(j_m)\right]}{\sum_{m=1}^{M}\gamma(i_m)} $. Let us write
		\begin{align*}
			&\mathbb{P}_{\pi_1\times\pi_2}\left[\frac{1}{c_N}\sum_{k=1}^{L^N(c_N)+1}O^N_k>\frac{\lambda}{2}\right]\\
			&\leq\mathbb{P}_{\pi_1\times\pi_2}\left[\frac{1}{c_N}\sum_{k=1}^{L^N(c_N)+1}O^N_k>\frac{\lambda}{2}; \frac{L^N(c_N)\mathbb{E}_1\left[R^N_1\right]}{c_N}\leq  2 s_M\right] + \mathbb{P}_{\pi_1\times\pi_2}\left[\frac{L^N(c_N)\mathbb{E}_1\left[R^N_1\right]}{c_N} > 2 s_M \right]\, .
		\end{align*}
		By Lemma \ref{lemma_L^N}, the last term above goes to zero as $N\to\infty$. The summation inside the probability of the other term can be bounded by the summation up to $\displaystyle k=\frac{2s_Mc_N}{L^N(c_N)\mathbb{E}_1[R_1^N]}+1$, then the first term in the last line above, by Markov's inequality, can be bounded above by
		\begin{equation*}
			\frac{2}{\lambda c_N}\left[\frac{2s_Mc_N}{\mathbb{E}_1\left[R^N_1\right]}+1\right]\mathbb{E}_1\left[O^N_1\right]\, .
		\end{equation*}
		By \eqref{convergence_exp}, \eqref{expectationRN} and \eqref{Uk}, we have
		\begin{equation*}
			\lim_{N\to\infty}\frac{\mathbb{E}_1\left[O^N_1\right]}{\mathbb{E}_1\left[R^N_1\right]}
			=\frac{\sum_{i\notin I_M\cup J_M }\gamma(i)}{\sum_{m=1}^{M}\left[\gamma(i_m)+\gamma(j_m)\right]}\, .
		\end{equation*}
		Analogously to \eqref{eq_4.2.2}, we can get 
		\begin{equation*}
			\lim_{N\to\infty}\frac{\mathbb{E}_1\left[O^N_1\right]}{c_N} = 0\, .
		\end{equation*}
		Then
		\begin{equation*}
			\lim_{N\to\infty}\frac{1}{c_N}\left[\frac{2s_Mc_N}{\mathbb{E}_1\left[R^N_1\right]}+1\right]\mathbb{E}_1\left[O^N_1\right] =2s_M\frac{\sum_{i\notin I_M\cup J_M}\gamma(i)}{\sum_{m=1}^{M}\gamma(i_m)}\, .
		\end{equation*}
		The last factor in  the right hand side above goes to zero as $M\to\infty$. Hence the proof of Proposition \ref{prop_4.2} is complete.

		\section{Below fine tuning}
		\setcounter{equation}{0}
		
		The behavior in this regime is quite similar to the one for the corresponding regime in the cascading case analysed in~\cite{fg2018},
		and so is the proof of Theorem~\ref{Thm_below_FT}, when compared to Theorem 2.7 of~\cite{fg2018}; so, we will be quite concise in this section, arguing some proofs explicitly, for a measure of completeness, and pointing to a result of~\cite{fg2018} in a particular  instance.
		
		We recall that we are in the framework of Skorohod Representation for the environment, as pointed out in more detail in~(\ref{limit_thm_1}) above.
		
		Given a fixed $M>0$ let us consider $\mathcal{M}:=\{ 1,\dots,M \}$.	Since the motion between first level low-lying configurations is approximately uniform (as follows from Corollary 1.5 in~\cite{gayrard2008}), it is enough to establish two things in the time scale considered: that the 
		time spent in each visit to $\mathcal{M}$ is approximately exponential, with the appropriate rate, 
		and that the time spent between successive visits
		to  $\mathcal{M}$ is negligible. We will devote a subsection for each of these issues.


		\subsection{Time spent in each visit  to $\mathcal{M}$.}
		
		In this subsection we will  study the time spent by $X^N$ in each visit to one of the states in the set 
		$\mathcal{M}=\{1,\cdots, M\}$. It is enough to consider the first visit --- see Remark~\ref{1st_vis} below.
		\begin{proposition}\label{time_spent_in_each_visit_to_$M_1$_above_FT} 
			For each $i\in\mathcal{M}$, let $\Psi^N_i$ be the time spent by $X^N$ during its first visit to $\{i\}$. 
			Then, for $\bar c_N$ defined in \eqref{bar_c_N}, we have that $\bar c^{-1}_N\Psi^N_i$ converges in distribution to a mean $e^{\frac{\beta}{\beta_*}\xi_i}$  exponential random variable, as $N$ goes to infinity.
		\end{proposition}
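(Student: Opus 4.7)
The proof rests on recognizing that the duration $\Psi_i^N$ of the first visit of $X^N$ to $\{i\}$ equals the random sum $H^N(i)=\sum_{j=0}^{G^N(i)-1}\tau_j^N$ described around~\eqref{eq_4.1.1}, where $\tau_j^N$ is the exponential holding time at $(\sigma_1(i),J^{2,N}(j))$ and $G^N(i)$ is Geometric with mean $\mu_N(i)$, independent of $J^{2,N}$ and the $T_j$'s. In the regime $\beta>\bar\beta_{FT}$, Theorem~\ref{Thm_1} implies $\mu_N(i)\sim\frac{N_2}{N_1}e^{\beta\beta_*aN+\beta\sqrt{aN}W_i}$ grows exponentially in $N$, so $G^N(i)/\mu_N(i)$ converges in distribution to a standard exponential random variable $\mathcal{E}$. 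A preliminary reduction is to show that at the instant $X^N$ first enters $\{i\}$, the second coordinate is approximately $\pi_2$-distributed: since the number of second-level jumps preceding the first visit is with high probability much larger than the $O(N_2^2)$ mixing time of $J^{2,N}$, a coupling argument via~\eqref{BG} allows one to replace the entering distribution of the second coordinate by $\pi_2$, whence we work under $\mathbb{P}_i$ (as in~\eqref{def_prob}).

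The central step is the decomposition
\begin{equation*}
\frac{H^N(i)}{\bar c_N}=\frac{G^N(i)}{\mu_N(i)}\,\cdot\,\frac{1}{G^N(i)}\sum_{j=0}^{G^N(i)-1}\frac{\tau_j^N}{\mathbb{E}_{\pi_2}[\tau^N]}\,\cdot\,\frac{\mu_N(i)\,\mathbb{E}_{\pi_2}[\tau^N]}{\bar c_N}.
\end{equation*}
The first factor converges in distribution to $\mathcal{E}$ as noted. The third is deterministic given the environment, and, by a direct computation parallel to~\eqref{expectationF_m}--\eqref{tch} together with~\eqref{convergence_exp}, converges to $\gamma_i=e^{\frac{\beta}{\beta_*}\xi_i}$. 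Slutsky's theorem then yields the conclusion, provided the middle factor converges to $1$ in probability.

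The heart of the argument is precisely this last convergence: the summands $\tau_j^N$ are correlated through the trajectory of $J^{2,N}$, and their single-step fluctuations are not a priori small enough for a naive law of large numbers. I would address it as in the proof of Lemma~\ref{lemma_4.1.2}, partitioning the index range in blocks of length $N^3$ and at the end of each block coupling $J^{2,N}$ to an independent $\pi_2$-sample via~\eqref{BG}, combined with a periodicity-breaking Bernoulli in the spirit of the $K^N_\ell$ construction. With probability tending to one the coupling holds along the entire trajectory up to step $G^N(i)$, producing iid block contributions for which a Chebyshev estimate analogous to~\eqref{largenumberscond2} yields the desired concentration; the negligibility of the pre-coupling portion of each block is handled as in the bound leading to~\eqref{tch1}. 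The independence of $G^N(i)$ from $J^{2,N}$ and the $T_j$'s, together with Slutsky's theorem, then closes the argument.
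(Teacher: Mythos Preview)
Your approach is correct but organized differently from the paper's. The paper splits $\Psi^N_i=\Upsilon^N_i+\Gamma^N_i$, where $\Upsilon^N_i$ is the time $\sigma^N$ spends at the single ground configuration $\sigma(i)=\sigma_1(i)\sigma_2(i)$ and $\Gamma^N_i$ is the time spent at the remaining pairs $(\sigma_1(i),\sigma_2')$, $\sigma_2'\ne\sigma_2(i)$. A first-moment bound (using~\eqref{convergence_exp}) gives $\bar c_N^{-1}\Gamma^N_i\to 0$, while $\bar c_N^{-1}\Upsilon^N_i$ factors as an environment constant times a rescaled geometric times $\frac{2^{N_2}}{G^N}\sum_{j}\mathbb{1}_{\{J^{2,N}(j)=\sigma_2\}}T_j$; this last object is an occupation-time average at a \emph{single} site, and the paper simply cites Lemma~7.3 of~\cite{fg2018} for its convergence to $1$. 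Your unified middle factor plays the same role, but its fluctuations are dominated by the rare visits to $\sigma_2(i)$ anyway, so the block-coupling/Chebyshev machinery you import from the proof of Lemma~\ref{lemma_4.1.2} ends up repackaging the same single-site estimate with more overhead; the paper's split buys a lighter second-moment computation, whereas your route is more self-contained. Two minor points: your preliminary mixing reduction for the entry distribution of the second coordinate is unnecessary here, since the paper assumes $\sigma^N(0)$ is uniform, so $J^{2,N}$ is stationary under $\pi_2$ throughout (cf.~Remark~\ref{1st_vis}); and be aware that citing~\eqref{BG} verbatim gives a coupling error of order $2^{-3N_2}$ per block, which is not small enough against $G^N(i)/N^3\sim\mu_N(i)/N^3$ blocks for all $\beta>\bar\beta_{FT}$---you need to iterate the bound (or use the stronger mixing available after $N^3\gg N_2^2$ steps) to cover the full range.
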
	
		\begin{proof}
			Let us denote by $\Upsilon^N_i$ and  $\Gamma^N_i$  the time spent by $\sigma^N$ on $\sigma^N(i)=\sigma_1^N(i)\sigma_{2}^N(i)$, respectively  outside of $\sigma^N(i)$, during the first visit of $X^N$ to $\{i\}$. Note that
			\begin{equation}\label{Psi}
				\Psi^N_i = \Upsilon^N_i + \Gamma^N_i.
			\end{equation}
			
			We will prove Proposition \ref{time_spent_in_each_visit_to_$M_1$_above_FT} in two steps.
			
			\underline{Step one:} Let us show  that $\bar c^{-1}_N\Upsilon_i^N$ converges in distribution to a mean $e^{\frac{\beta}{\beta_*}\xi_i}$ exponential random variable.
			
			To simplify the notation let us write $\sigma^N(i)$ as $\sigma = \sigma_1\sigma_2$. At each time the process reaches $\sigma_1$ it executes a geometric numbers of steps before leaving $\sigma_1$. Let us call this geometric random variable by ${G}^N$. Notice that ${G}^N$ has mean $1+\frac{N_2}{N_1}e^{\beta\sqrt{aN}\Xi^{(1)}_{\sigma_1}}$. At each time the process $\sigma^N$ arrives at $\sigma$ it spends an exponential time of mean $\frac{N_2}{N}e^{\beta\sqrt{(1-a)N}\Xi^{(2)}_{\sigma}}$. 
			Hence  taking, as before, $\{T_j,\, j\geq 0\}$ an  i.i.d.~family of mean one exponential random variables, 
			and $\{J^{2,N}(j),\,j\ge 0\}$ the discrete random walk on $\mathcal{V}_{N_2}$ associated to $\sigma_N$ we get,
			\begin{align*}\label{Upsilon}
				\Upsilon_i^N &=\frac{N_2}{N}e^{\beta\sqrt{(1-a)N}\Xi_{\sigma}^{(2)}}\sum_{j=0}^{{G}^N-1}1_{\{ J^N_2(j)=\sigma_2\}}T_j\\
				&= \frac{(N_2)^2}{N_1N}e^{\beta\sqrt{N}\Xi_{\sigma}}2^{-N_2}\left(\frac{N_1}{N_2}
				e^{-\beta\sqrt{aN}\Xi^{(1)}_{\sigma_1}}\right){G}^N\,\frac{2^{N_2}}{{G}^N}\sum_{j=0}^{{G}^N}1_{\{J^{2,N}(j)=\sigma_2\}}T_j.
			\end{align*}
			Then
			\begin{equation*}
				\bar c^{-1}_N\Upsilon_i^N  = e^{-\beta\beta_*N 
					+\frac{\beta}{2\beta_*}(\log N + \kappa)} e^{\beta\sqrt{N}\Xi_{\sigma}}
				\left(\frac{N_1}{N_2}e^{-\beta\sqrt{aN}\Xi^{(1)}_{\sigma_1}}\right)
				{G}^N\,\frac{2^{N_2}}{{G}^N}\sum_{j=0}^{{G}^N}1_{\{J^{2,N}(j)=\sigma_2\}}T_j.
			\end{equation*}
			From our assumptions on the environment, we have that a.s.~$e^{(\beta/\beta_*)u_N^{-1}(X_{\sigma})}\to e^{(\beta/\beta_*)\xi_i}$ as $N$ goes to infinity, and it follows readily that 
			$$
			\left(\frac{N_1}{N_2}e^{-\beta\sqrt{aN}\Xi^{(1)}_{\sigma_1}}\right){G}^N
			$$
			converges to a mean one exponential random variable 
			So it is enough to prove that 
			\begin{equation}\label{below_eq1}
				\frac{2^{N_2}}{{G}^N}\sum_{j=0}^{{G}^N-1}\mathbb{1}_{\{J^N_2(j)=\sigma_2\}}T_j
			\end{equation}
			converges to one in probability as $N$ goes to infinity. But this object has already been considered in 
			the proof of Lemma 7.3 of~\cite{fg2018}, where this result was established; see (7.4) and (7.6) in~\cite{fg2018}.

			\underline{Step two:} Let us show  that $\bar c^{-1}_N\Gamma_i^N$ goes to zero in probability.
			
			The variable $\Gamma_i^N$ can be expressed as 
			\begin{align*}
				\Gamma^N_i &= \sum_{j=0}^{{G}^N-1}\mathbb{1}_{\{J^{2,N}(j)\neq\sigma_2\}} e^{\beta\sqrt{(1-a)N}\Xi^{(2)}_{\sigma_1J^{2,N}(j)}}T_j
				=\sum_{\sigma_2'\neq\sigma_2}e^{\beta\sqrt{(1-a)N}\Xi^{(2)}_{\sigma_1\sigma_2'}}
				\sum_{j=0}^{{G}^N-1}\mathbb{1}_{\{J^N_2(j) = \sigma_2'\}}T_j.
			\end{align*}
			{Recall that initial distribution of $J^{2,N}$ is the uniform distribution on the hypercube $\{-1,1\}^{N_2}$}, denoted by $\pi_2$, which is invariant for $J^{2,N}$. Then
			\begin{align*}
				\mathbb{E}_{\pi_2}[\Gamma^N_i ] =\frac{N_2}{N_1} \sum_{\sigma_2'\neq\sigma_2}e^{\beta\sqrt{(1-a)N}\Xi^{(2)}_{\sigma_1\sigma_2'}}e^{\beta\sqrt{aN}\Xi^{(1)}_{\sigma_1}}\frac{1}{2^{N_2}}= \frac{N_2}{N_12^{N_2}} \sum_{\sigma_2'\neq\sigma_2}e^{ \beta\sqrt{N}\Xi_{\sigma_1\sigma_2 '}}.
			\end{align*}
			Then 
			$$\bar c^{-1}_N\mathbb{E}_{\pi_2}[\Gamma^N_i]
			=\frac{N}{N_2}\sum_{\sigma'_2\neq\sigma_2}e^{\frac{\beta}{\beta_*}u_N^{-1}(\Xi_{\sigma_1\sigma'_2})},$$
			which may be readily checked from our assumptions and properties of our environment to a.s.~converge to zero.
			
		\end{proof}
		
		\begin{remark}\label{1st_vis}
			The successive visit durations to a given state of $\M$ are identically distributed, due to stationary of $J^{2,N}$
			under the uniform initial distribution. It is quite clear from the above proof, in particular from the convergence  
			of~(\ref{below_eq1}), that the limiting exponential random variables are independent.
		\end{remark}

		\subsection{Time spent outside $\mathcal{M}$.} \label{out}
		As in the previous section, consider $i\in\mathcal{M}=\{1,\dots,M\}$ and  denote by $U^N_i$ the time spent by $X^N$ outside $\mathcal{M}$ until the first visit to $\{i\}$. Then
		\begin{align*}
			U^N_i &= \sum_{k=0}^{\tau_{i}^N}\mathbb{1}_{\{J^{1,N}(k)\notin \mathcal{M}\}}
			\sum_{j=0}^{{G}_{k}^N(J^{1,N}(k))-1}e^{\beta\sqrt{(1-a)N}\Xi^{(2)}_{J^{1,N}(k)J^{2,N}({\mathfrak G}_{k-1}^N+j)}}T_j^k\\
			&=\sum_{\ell \notin \mathcal{M} }\sum_{\sigma_2\in \mathcal{V}_{N_2}}e^{\beta\sqrt{(1-a)N}\Xi^{(2)}_{\sigma_1'\sigma_2}}
			\sum_{k=0}^{\tau_i^N  }\mathbb{1}_{\{J^{1,N}(k)=\sigma_1(\ell)\}}
			\sum_{j=0}^{{G}_{k}^N(\ell)-1}\mathbb{1}_{\{J^{2,N}({\mathfrak G}_{k-1}^N+j)=\sigma_2\}}T_j^k,
		\end{align*}
		where $\{J^{1,N}(j),\,j \geq 1\}$ is the discrete random walk on $\mathcal{V}_{N_1}$ associated to $\sigma^N$; 
		${\mathfrak G}_{k}^N = \sum_{n=0}^{k}{G}_n^N(J^{1,N}(n))$, $k\geq0$, ${\mathfrak G}_{-1}^N =0$,
		and $\tau_{i}^N$ is the first time that $J^{1,N}$ reaches $\sigma_1(i)$. 
		As in Section 7.3.2 of \cite{fg2018}, we have 
		\begin{equation*}
			\mathbb{E_{\pi_1\times\pi_2}}[U_i^N] = \sum_{\ell\notin\mathcal{M}}
			\left(\sum_{\sigma_2\in\mathcal{V}_{N_2}}e^{\beta\sqrt{N}\Xi_{\sigma_1(\ell)\sigma_2}}
			\frac{1}{2^{N_2}}\mathbb{E}_{\pi_2}\left[\sum_{k=0}^{\tau_{i}^N}\mathbb{1}_{\{J^{1,N}(k)=\sigma_1(\ell)\}}\right]\right).
		\end{equation*}	
		By Lemma 7.4 of \cite{fg2018}, for $N$ large enough, we have 
		\begin{equation}\label{U_N}
			\mathbb{E_{\pi_1\times\pi_2}}[U^N] \leq 2\sum_{\ell\notin \mathcal{M}}
			\left(\sum_{\sigma_2\in\mathcal{V}_{N_2}}e^{\beta\sqrt{N}\Xi_{\sigma_1(\ell)\sigma_2}}\frac{1}{2^{N_2}}\right).
		\end{equation}	
		
		Let now $\mathcal{W}_{i,j}^N$  be the time spent by $X^N$ outside $\mathcal{M}$ between the $j^{th}$ and $(j+1)^{th}$ visit to $i\in \mathcal{M}$, for $j \geq 1$. Again, as in Section 7.3.2 of \cite{fg2018}, we get	
		\begin{equation*}
			\mathbb{E}_{\sigma(i)}[\mathcal{W}_{i,j}^{N}] 
			= \sum_{\ell\notin \mathcal{M}} \left(\sum_{\sigma_2\in\mathcal{V}_{N_2}}e^{\beta\sqrt{N}\Xi_{\sigma_1(\ell)\sigma_{2}}}
			\frac{1}{2^{N_2}} \mathbb{E}_{\sigma_1}\left[\sum_{k=0}^{\tau_{i}^N}\mathbb{1}_{\{J^{1,N}(k)=\sigma_1(\ell)\}}\right] \right) .
		\end{equation*}
		As pointed out above,
		the expectation in the right hand side above is equal to one. 
		Then
		\begin{equation}\label{W_N}
			\mathbb{E}_{\sigma(i)}[\mathcal{W}_{i,j}^{N}] 
			= \sum_{\ell\notin \mathcal{M}}
			\left(\sum_{\sigma_2\in\mathcal{V}_{N_2}}e^{\beta\sqrt{N}\Xi_{\sigma_1(\ell)\sigma_2}}\frac{1}{2^{N_2}}\right).
		\end{equation}
		Hence we have that $\mathbb{E}_{\pi_1\times\pi_2}[\bar c^{-1}_NU_i^N]$ and  $\mathbb{E}_{\sigma(i)}[\bar c^{-1}_N\mathcal{W}_{i,j}^N]$ are all bounded above by
		$$2\sum_{\ell \notin \mathcal{M}}\sum_{\sigma_2\in\mathcal{V}_{N_2}}e^{\frac{\beta}{\beta_*}u_N^{-1}(\Xi_{\sigma_1(\ell)\sigma_2})},$$
		which, by Theorem \ref{BK}, converges to
		\begin{equation}\label{calda}
			2\sum_{j=M+1}^{\infty}e^{\frac{\beta}{\beta_*}\xi_{j}},
		\end{equation}
		where $\{\xi_j,\,j\geq 1\}$ is a Poisson point process as in Definition \ref{PPP}. 
		It may be readily checked that the latter series is a.s.~convergent, and the result follows.

		\section{At fine tuning}
		\setcounter{equation}{0}
		
		This case is quite similar to the one of the previous section, and again we will be quite concise. Again, since 
		the motion between first level low-lying configurations is approximately uniform, in order to prove Theorem~\ref{Thm_at_FT},
		it is enough to show that the rescaled
		time spent in each visit to any state in $\M_L=\mathcal{M}\cap\N_L$ is approximately exponential, with the appropriate rate, 
		and that the rescaled time spent between successive visits
		to  $\M_L$ is appropriately negligible. We do that in the following two subsections.

		\subsection{Time spent in each visit to $\M_L$}
		
		Now we will study the time spent by the process $X^N$ during visits to a given low-lying configuration $i\in\M$. The next proposition states our result.  For similar reasons as for the result of the previous section, it is enough to consider the first such visit.
		Recall $\Psi_i^N$ defined in the statement of Proposition~\ref{time_spent_in_each_visit_to_$M_1$_above_FT}. 
		\begin{proposition}\label{time_spent_in_each_visit_to_$M_1$_at_FT}
			Let $i\in\M$ be (the label of) a low-lying configuration. Then for all $L\in\R$
			\begin{enumerate}
				\item[(a)] if $W_{i}>L$, then $c_N^{-1}\Psi_i^N$ converges in distribution to a mean $e^{\frac{\beta_{FT}}{\beta_*}\xi_{i}}$ exponential random variable,
				\item[(b)] if $W_{i}< L$, then $c_N^{-1}\Psi_i^N$ converges in probability to zero. 
			\end{enumerate} 
		\end{proposition}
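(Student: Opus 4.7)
The plan is to adapt the strategy of the analogous proposition for the below fine-tuning regime, decomposing
\[
\Psi_i^N = \Upsilon_i^N + \Gamma_i^N,
\]
where $\Upsilon_i^N$ is the time that $\sigma^N$ spends at the matched pair $\sigma=\sigma_1(i)\sigma_2(i)$ during the first visit of $X^N$ to $\{i\}$, and $\Gamma_i^N$ is the time spent at the remaining configurations $\sigma_1(i)\sigma_2'$ with $\sigma_2'\neq\sigma_2(i)$ during the same visit.

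For case (a) ($W_i>L$), I would write $c_N^{-1}\Upsilon_i^N = (\bar c_N/c_N)\,\bar c_N^{-1}\Upsilon_i^N$ and first verify that at $\beta=\beta_{FT}$ the logarithm $\log(\bar c_N/c_N) = -N_2\log 2 + \beta_{FT}(\beta_*aN + \sqrt{aN}\,L)$ is asymptotically bounded: the $N$-order terms cancel because $(1-p)\log 2 = \bar\beta_{FT}\beta_*a$, and the $\sqrt N$-order terms cancel because the specific choice $\theta=\tfrac{(1-p)L}{2a^{3/2}}$ forces $\theta\beta_*a = \bar\beta_{FT}\sqrt{a}\,L$. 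Under $W_i>L$ the mean $\mu_N$ of the geometric $G^N$ satisfies $\mu_N \sim \tfrac{1-p}{p}\cdot 2^{N_2}\,e^{\frac{(1-p)\beta_*}{2\sqrt{a}}(W_i-L)\sqrt N}\to\infty$, so the three-factor decomposition used in Step~1 of the proof of the below-fine-tuning proposition applies verbatim: $e^{\frac{\beta}{\beta_*}u_N^{-1}(\Xi_\sigma)}\to e^{\frac{\beta_{FT}}{\beta_*}\xi_i}$ almost surely, $\bigl(\tfrac{N_1}{N_2}e^{-\beta\sqrt{aN}\Xi^{(1)}_{\sigma_1(i)}}\bigr)G^N$ converges in distribution to $\mathrm{Exp}(1)$, and the self-normalized sum $\tfrac{2^{N_2}}{G^N}\sum_{j<G^N}\mathbb{1}_{\{J^{2,N}(j)=\sigma_2(i)\}}T_j\to 1$ in probability. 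The estimate $c_N^{-1}\Gamma_i^N\to 0$ in probability then follows by reproducing Step~2 of the same proof with the obvious modifications.

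For case (b) ($W_i<L$), the mean $\mu_N$ still diverges (the $2^{N_2}$ factor dominates), but the expected number of visits of $J^{2,N}$ to $\sigma_2(i)$ during the first visit is $\mu_N/2^{N_2} \sim e^{\frac{(1-p)\beta_*}{2\sqrt{a}}(W_i-L)\sqrt N}\to 0$, so $\Upsilon_i^N=0$ with probability tending to one, and it suffices to show $c_N^{-1}\Gamma_i^N\to 0$ in probability. The subtlety is that a direct Markov bound fails: by a computation analogous to~\eqref{expectationF_m}, $c_N^{-1}\E[\Gamma_i^N]$ converges to a strictly positive constant, with the expectation dominated by rare excursions to configurations $\sigma_2'$ whose $\Xi^{(2)}_{\sigma_1(i)\sigma_2'}$ is in the extreme tail. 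To circumvent this I would truncate: on the event $\mathcal E_N$ that $|\Xi^{(2)}_{\sigma_1(i)J^{2,N}(j)}|\le r_N$ for all $j<G^N$, with $r_N$ chosen slightly above $\beta_*\sqrt{(1-p)N}$ (so that $\P(\mathcal E_N^c)\to 0$ by a union bound using the Gaussian tail together with $G^N\le C\mu_N\le C'\,2^{N_2}$ on a further high-probability event), one obtains a deterministic bound $\Gamma_i^N \le C\mu_N\cdot\tfrac{N}{N_2}\,e^{\beta_{FT}\sqrt{(1-a)N}\,r_N}$, whose ratio to $c_N = e^{\beta_{FT}\beta_*(1-a)N(1+o(1))}$ has leading-order exponent $\frac{(1-p)\beta_*^2}{2a}\bigl[\sqrt{(1-a)(1-p)} - (1-2a)\bigr](1+o(1))$.

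The main obstacle will be case~(b): the truncation level $r_N$ must be tuned to make both the truncation event probable and the resulting deterministic bound subdominant to $c_N$, and the leading-order exponent above is negative only in a sub-range of the fine-tuning parameters (roughly, when $(1-a)(1-p)<(1-2a)^2$); outside of that range the naive truncation does not suffice, and one must instead exploit that the sampled values $\{\Xi^{(2)}_{\sigma_1(i)J^{2,N}(j)}\}_{j<G^N}$ comprise a vanishing fraction of $\mathcal V_{N_2}$ and so have empirical tail substantially lighter than that of a full sweep over $\mathcal V_{N_2}$, the goal being to replace $r_N$ by a smaller effective truncation coming from a second-moment analysis of $\Gamma_i^N$ on $\mathcal E_N$, which uses the concentration of the truncated REM partition function $\tfrac{1}{2^{N_2}}\sum_{\sigma_2'}e^{\beta\sqrt{(1-a)N}\Xi^{(2)}_{\sigma_1(i)\sigma_2'}}\mathbb{1}_{\{|\Xi^{(2)}|\le r_N\}}$ together with the non-cascading condition $a\le p$.
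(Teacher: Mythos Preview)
Your handling of case~(a) and of $\Upsilon_i^N$ in case~(b) is fine and matches the paper. The error is in your analysis of $\Gamma_i^N$ for case~(b): the simple Markov bound does \emph{not} fail, and your elaborate truncation scheme is unnecessary.

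You claim that $c_N^{-1}\E[\Gamma_i^N]$ converges to a strictly positive constant, citing a computation analogous to~\eqref{expectationF_m}. But~\eqref{expectationF_m} sums over \emph{all} $\sigma_2\in\mathcal V_{N_2}$, whereas $\Gamma_i^N$ by definition excludes the matched configuration $\sigma_2(i)$. Repeating Step~2 of the proof of Proposition~\ref{time_spent_in_each_visit_to_$M_1$_above_FT} gives
\[
c_N^{-1}\,\E_{\pi_2}[\Gamma_i^N]\ \sim\ \bar c_N^{-1}\,\E_{\pi_2}[\Gamma_i^N]\ =\ \tfrac{N_2}{N_1}\sum_{\sigma_2'\neq\sigma_2(i)}\gamma^N(\sigma_1(i)\sigma_2'),
\]
since $c_N/\bar c_N$ is bounded at fine tuning, as you yourself verified. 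Now, by~\eqref{convergence_exp} we have $\sum_{\sigma_2}\gamma^N(\sigma_1(i)\sigma_2)\to\gamma_i$, and by~\eqref{BK} the single term $\gamma^N(\sigma_1(i)\sigma_2(i))=\gamma_i^N\to\gamma_i$ as well; subtracting, the sum over $\sigma_2'\neq\sigma_2(i)$ tends to $0$. This is exactly the non-cascading feature highlighted in Remark~1.4: for each first-level low-lying $\sigma_1(i)$ there is a single matching $\sigma_2(i)$, and all other second-level configurations contribute negligibly. So the first-moment bound already gives $c_N^{-1}\Gamma_i^N\to 0$ in probability, uniformly in whether $W_i>L$ or $W_i<L$; this is precisely what the paper means by ``proceeding as in the end of the proof of Proposition~\ref{time_spent_in_each_visit_to_$M_1$_above_FT}''.

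Your intuition that ``the expectation is dominated by rare excursions to configurations $\sigma_2'$ in the extreme tail'' would be correct if $\sigma_1(i)$ had several near-extremal second-level partners, as happens in the cascading phase; in the non-cascading phase it does not. The obstacle you identify, and the parameter restriction you worry about, are both phantoms.
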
 
		Recall $\Upsilon_i^N$ defined in the paragraph of  \eqref{Psi}.
		\begin{lemma} \label{conv_at_FT} 
			Under the same conditions, we have
			\begin{enumerate}
				\item [(a)] if $W_{i}>L$, then $c_N^{-1}\Upsilon_i^N$ converges in distribution to an exponential random variable with mean $e^{\frac{\beta_{FT}}{\beta_*}\xi_{\sigma}}$ as $N$ goes to infinity,
				\item [(b)] if $W_i< L$, then $\Upsilon_i^N=0$ with probability tending to 1 as $N$ goes to infinity.
			\end{enumerate}	
		\end{lemma}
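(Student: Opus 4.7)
The plan is to mirror the argument of Step~1 in the proof of Proposition~\ref{time_spent_in_each_visit_to_$M_1$_above_FT}, with the fine tuning scale $c_N$ in place of $\bar c_N$, and with separate treatments of the two subcases driven by the asymptotic size of $\mu_N=1+\frac{N_2}{N_1}e^{\beta\sqrt{aN}\Xi_{\sigma_1(i)}^{(1)}}$ at $\beta=\beta_{FT}$. Writing $\sigma=\sigma(i)=\sigma_1\sigma_2$, I would start from the representation
\[
\Upsilon_i^N = (\text{per-visit holding time})\cdot e^{\beta\sqrt{(1-a)N}\Xi_\sigma^{(2)}}\sum_{j=0}^{G^N-1}\mathbb{1}_{\{J^{2,N}(j)=\sigma_2\}}T_j,
\]
use $\Xi_\sigma^{(2)}=(\Xi_\sigma-\sqrt a\,\Xi_{\sigma_1}^{(1)})/\sqrt{1-a}$ together with the strong convergence $\Xi_{\sigma_1(i)}^{(1)}=\beta_*\sqrt{aN}+W_i+o(1)$ from Theorem~\ref{Thm_1}, and plug in the definitions of $c_N$ and $\beta_{FT}$ (with $\theta=\frac{1-p}{2a^{3/2}}L$) to rewrite $c_N^{-1}\Upsilon_i^N$ as a product of three factors: (i)~$\gamma^N(\sigma(i))$, converging a.s.\ to $e^{\frac{\beta_{FT}}{\beta_*}\xi_i}$; (ii)~a rescaled geometric factor $\bigl(\tfrac{N_1}{N_2}e^{-\beta_{FT}\sqrt{aN}\Xi_{\sigma_1(i)}^{(1)}}\bigr)G^N$; and (iii)~the ergodic average $\tfrac{2^{N_2}}{G^N}\sum_{j=0}^{G^N-1}\mathbb{1}_{\{J^{2,N}(j)=\sigma_2\}}T_j$. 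The fine tuning relation between $\theta$ and $L$ is precisely what makes the residual $W_i$-dependent exponential factors cancel between the prefactor and the asymptotic mean of $G^N/2^{N_2}$.

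For part~(a), where $W_i>L$, the identity $\beta_{FT}\beta_* a=(1-p)\log 2-\theta\beta_* a/\sqrt N$ together with $N_2=(1-p)N+O(1)$ gives $e^{\beta_{FT}\beta_*aN}=2^{N_2}e^{-\beta_{FT}L\sqrt{aN}+O(1)}$, whence $\tfrac{N_2}{N_1}e^{\beta_{FT}\sqrt{aN}\Xi_{\sigma_1(i)}^{(1)}}\sim 2^{N_2}e^{\beta_{FT}\sqrt{aN}(W_i-L)}$ diverges; hence $\mu_N\to\infty$, and since its ratio to $\tfrac{N_2}{N_1}e^{\beta_{FT}\sqrt{aN}\Xi_{\sigma_1(i)}^{(1)}}$ tends to one, factor~(ii) converges in distribution to a standard exponential $E$. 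The same estimate forces $G^N\gg N^2$ with high probability, so the uniform-mixing bound~\eqref{BG} of~\cite{bovier_veronique} applies exactly as in Subsection~\ref{sub_seq_4.1} and Proposition~\ref{time_spent_in_each_visit_to_$M_1$_above_FT} to give that factor~(iii) converges in probability to one. Combining the three limits yields the claimed exponential law with mean $e^{\frac{\beta_{FT}}{\beta_*}\xi_i}$.

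For part~(b), where $W_i<L$, the same estimate instead gives $\tfrac{N_2}{N_1}e^{\beta_{FT}\sqrt{aN}\Xi_{\sigma_1(i)}^{(1)}}\to 0$, so $\mu_N\to 1$ and $\mathbb{P}[G^N=1]\to 1$. On the event $\{G^N=1\}$ the sum collapses to $\mathbb{1}_{\{J^{2,N}(0)=\sigma_2\}}T_0$, and under the uniform initial law $\pi_2$ for $J^{2,N}$ this indicator vanishes except on an event of probability $2^{-N_2}\to 0$; hence $\Upsilon_i^N=0$ with probability tending to one. I expect the main obstacle to be the careful bookkeeping in part~(a) that verifies the exact cancellation of $W_i$- and $L$-dependent exponential factors between the prefactor and the leading asymptotics of $\mu_N/2^{N_2}$, so that the fine tuning choice of $\theta$ delivers precisely the mean $e^{\frac{\beta_{FT}}{\beta_*}\xi_i}$; the ergodic-average step, while technical, is essentially the same coupling-to-uniform argument already deployed in Subsection~\ref{sub_seq_4.1}.
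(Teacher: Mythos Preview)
Your treatment of part~(a) is correct and matches the paper's approach: once you know $\mathbb{E}[G^N]/2^{N_2}\to\infty$, the three-factor decomposition and the ergodic average argument go through exactly as in Step~1 of Proposition~\ref{time_spent_in_each_visit_to_$M_1$_above_FT}.

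Part~(b), however, contains a genuine error. Your own estimate reads
\[
\frac{N_2}{N_1}e^{\beta_{FT}\sqrt{aN}\,\Xi_{\sigma_1(i)}^{(1)}}\;\sim\;2^{N_2}\,e^{\beta_{FT}\sqrt{aN}\,(W_i-L)}.
\]
When $W_i<L$ the second factor is $e^{-c\sqrt{N}}$ for some $c>0$, but this is multiplied by $2^{N_2}=e^{(1-p)N\log 2+O(1)}$, so the product still diverges; it does \emph{not} tend to~$0$. Hence $\mu_N\to\infty$, $G^N$ is of order $2^{N_2}e^{-c\sqrt{N}}$, and $\mathbb{P}[G^N=1]\to 0$, not~$1$. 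Your argument that the sum collapses to a single term therefore fails.

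The actual mechanism behind~(b) is more delicate: $G^N$ is large, but not large enough. Its typical size $2^{N_2}e^{-c\sqrt{N}}$ is $\ll 2^{N_2}$, which is the order of the hitting time $\vartheta_{\sigma_2}$ of the matching second-level configuration by $J^{2,N}$. Thus $\Upsilon_i^N=0$ because the random walk leaves $\sigma_1(i)$ before ever reaching $\sigma_2(i)$. The paper makes this precise via
\[
\mathbb{P}[\Upsilon_i^N>0]=\mathbb{P}[G^N\ge\vartheta_{\sigma_2}]=\mathbb{E}\big[(1-q)^{\vartheta_{\sigma_2}}\big],
\]
and evaluates the latter using Kemperman's formula (as in the paragraph of~\eqref{kemp}--\eqref{kempf}), obtaining a quantity of order $e^{-\nu(L-W_i)\sqrt{N}}\to 0$ with $\nu=\frac{1-p}{2\sqrt{a}}\beta_*$. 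You need an argument of this type---comparing $G^N$ to the hitting time of $\sigma_2$---rather than one based on $G^N$ being small.
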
	
		\begin{proof}
			In the case $W_i>L$, we may readily check that $\lim_{N\to\infty}\frac{\mathbb{E}[{G}^N]}{2^{N_2}}=+\infty$,
			where ${G}^N$ is as in Step 1 of the proof of Proposition~\ref{time_spent_in_each_visit_to_$M_1$_above_FT}.
			This was the key behind the proof of the convergence results in the {\em above fine tuning} regime. 
			With this fact in hands, {\em (a)} follows in the same way as in the proof of the first step of 
			Proposition \ref{time_spent_in_each_visit_to_$M_1$_above_FT}.
			
			For {\em (b)}, we have
			\begin{equation}\label{prob}
				\mathbb{P}[\Upsilon_i^N>0]
				=\mathbb{P}[{G}^N\geq \tt] =\E[(1-q)^{\tt}],
			\end{equation}
			
			where $\tt$ is the hitting time of $\sigma_2$ by $J^{2,N}$.
			
			Again, by the application of Kemperman's formula to the latter expectation, as explained in the paragraph of~(\ref{kemp}-\ref{kempf}) above, we have that the right hand side of~\eqref{prob} is $\sim\big(1+\frac{2^{N_2+1}}{N_2}\l\big)^{-1}$,
			where $\l=\frac{N_2}2\frac{q}{1-q}=\frac{N_1}{2}e^{-\beta\sqrt{aN}\Xi^{(1)}_{\sigma_1}}$. 
			In the present case $\l=O\big(N_12^{-N_2}e^{\nu(L-W_i+o_1)\sqrt N}\big)$, with $\nu=\frac{1-p}{2\sqrt a}\beta_\ast$, so
			we find that~\eqref{prob} is an $O\big(e^{-\nu(L-W_i+o_1)\sqrt N}\big)$, which in turn vanishes as $N\to\infty$ from our hypothesis.
		\end{proof}	
		\begin{proof}[Proof of Proposition \ref{time_spent_in_each_visit_to_$M_1$_at_FT}]
			Recall from \eqref{Psi} the definition of $\Gamma_i^N$. Using Lemma \ref{conv_at_FT}, we only need to show that $c_N^{-1}\Gamma_i^N$ converges to zero in probability. Proceeding as in the end of the proof of Proposition \ref{time_spent_in_each_visit_to_$M_1$_above_FT}, we obtain the desired result.   	
		\end{proof}	
		
		\subsection{Time spent outside $\M$}
		
		The argument from Subsection~\ref{out} works here as well up to~\eqref{W_N}. Then we must replace $\bar c_N$ by $c_N$,
		and the argument goes over in the same way, except that we get an extra factor of $e^{\frac{1-p}{2a}L^2}$ in~\eqref{calda}, 
		where $\beta$ should be taken as $\bar\beta_{FT}$.

		\section*{Acknowledgements}
		
		LRF was partially supported by CNPq grants 311257/2014-3 and 307884/2019-8; and FAPESP grant 2017/10555-0.
		SF was partially supported by FAPESP grant 2017/10555-0 during visits to USP to carry out this research.
		LZ was supported by a PNPD/CAPES grant 88882.315481/2013-01 post doctoral fellowship. We thank an anonymous referee for many comments to an earlier version of this text that much helped improve the presentation.

		\markboth{References}{References}
		\bibliographystyle{abbrv}
		\bibliography{GREM}

\end{document}